\numberwithin{equation}{section}
\newtheorem{definition}{Definition}[section]
\newtheorem{remark}{Remark}[section]
\newtheorem{theorem}{Theorem}[section]
\newtheorem{corollary}{Corollary}[section]
\newtheorem{lemma}{{Lemma}}[section]
\newcommand{\cP}{\mathcal{P}}
\newcommand{\E}{\mathcal{E}}
\newcommand{\cT}{\mathcal{T}}
\renewcommand{\div}{\mbox{\rm div\,}}
\newcommand{\cF}{\mathcal{F}}
\newcommand{\mP}{\mathbb{P}}
\newcommand{\mE}{\mathbb{E}}
\newcommand{\mV}{\mathbb{V}}
\newcommand{\mW}{\mathbb{W}}
\newcommand{\veps}{\varepsilon}
\newcommand{\Ome}{\Omega}
\newcommand{\p}{\partial}
\newcommand{\nab}{\nabla}
\newcommand{\vu}{{\bf u}}
\newcommand{\vB}{{\bf B}}
\newcommand{\vW}{{\bf W}}
\newcommand{\vf}{{\bf f}}
\newcommand{\vH}{{\bf H}}
\newcommand{\bH}{\mathbb{H}}
\newcommand{\bV}{\mathbb{V}}
\newcommand{\vL}{{\bf L}}
\newcommand{\vv}{{\bf v}}
\newcommand{\ve}{{\bf e}}
\newcommand{\e}{\pmb{\varepsilon}}
\newcommand{\ta}{\pmb{\theta}}
\begin{document}
	
	\title[Chorin-Type Projection Methods for Stochastic Stokes Equations]{Analysis of  Chorin-Type Projection Methods for the Stochastic Stokes Equations with General  Multiplicative Noises\dag}
	\thanks{\dag This work was partially supported by the NSF grants DMS-1620168 and 
		DMS-2012414.} 
	%\markboth{X. FENG and M. SUTTON}{A NEW THEORY OF WEAK FRACTIONAL CALCULUS}
	
	\author{Xiaobing Feng}
	\address{Department of Mathematics, The University of Tennessee, Knoxville, TN 37996, U.S.A. }
	\email{xfeng@math.utk.edu}
	%\thanks{\dag Department of Mathematics, The University of Tennessee, Knoxville, TN 37996, U.S.A. (xfeng@math.utk.edu).
	%	The work of this author was partially supported by the NSF grant: DMS-1620168.}
	
	\author{Liet Vo}
	\address{Department of Mathematics, The University of Tennessee, Knoxville, TN 37996, U.S.A. }
	\email{lvo6@vols.utk.edu}
	%\thanks{\ddag Department of Mathematics, The University of Tennessee, Knoxville, TN 37996, U.S.A. (lvo6@vols.utk.edu).
	%	The work of this author was partially supported by the NSF grant: DMS-1620168.}
	
	%\date{}
	
	%\maketitle
	
	%\thispagestyle{empty}

	\begin{abstract}
		This paper is concerned with numerical analysis of two fully discrete 
		Chorin-type projection methods for the stochastic Stokes equations with general 
		non-solenoidal multiplicative noise. The first scheme is the standard Chorin scheme 
		and the second one is a modified Chorin scheme which is designed by employing the 
		Helmholtz decomposition on the noise function at each time step to produce 
		a projected divergence-free noise and a ``pseudo pressure" after combining the original 
		pressure and the curl-free part of the decomposition.   An $O(k^\frac14)$ rate of convergence is proved for the standard Chorin scheme, which is sharp but not optimal due to the use of non-solenoidal noise, where $k$ denotes the time mesh size. On the other hand, an optimal convergence rate  $O(k^\frac12)$ is established for the modified Chorin scheme. 
		%It is crucial to measure the errors in appropriate norms. 
		The fully discrete finite element methods are formulated by discretizing both semi-discrete Chorin schemes in space by the standard finite element method. Suboptimal 
		order error estimates are derived for both fully discrete methods. It is proved that 
		all spatial error constants contain a growth factor $k^{-\frac12}$, where $k$ denotes the 
		time step size, which explains the deteriorating performance of the standard Chorin scheme 
		when $k\to 0$ and the space mesh size is fixed as observed earlier in the numerical tests of \cite{CHP12}.  
		Numerical results are also provided to guage the performance of the proposed 
		numerical methods and to validate the sharpness of the theoretical error estimates.
		
	\end{abstract}
	
	\keywords{
		%\begin{keywords}
		Stochastic Stokes equations, multiplicative noise, Wiener process, It\^o stochastic integral, Chorin projection scheme, inf-sup condition, error estimates}
	%\end{keywords}

	\subjclass[2010]{Primary
		%\begin{AMS}
		65N12, %Stability and convergence of numerical methods
		65N15, %Error bounds
		65N30, %Finite elements, Rayleigh-Ritz and Galerkin methods, finite methods
		%\end{AMS}
	}
	
	\maketitle
	
	\tableofcontents

	%%%%%%%%
	\section{Introduction}\label{sec-1}
This paper is concerned with developing and analyzing Chorin-type projection finite element methods for the  following time-dependent stochastic Stokes problem:
\begin{subequations}\label{eq1.1}
	\begin{alignat}{2} \label{eq1.1a}
		d\vu &=\bigl[\nu\Delta \vu-\nabla p + { \vf}\bigr] dt + \vB(\vu) d\vW(t)  &&\qquad\mbox{a.s. in}\, D_T:=(0,T)\times D,\\
		\div \vu &=0 &&\qquad\mbox{a.s. in}\, D_T,\label{eq1.1b}\\
		%\vu &=0 &&\qquad\mbox{a.s. on}\, \partial D_T:=(0,T)\times \partial D, \label{eq1.1c}\\
		\vu(0)&= \vu_0 &&\qquad\mbox{a.s. in}\, D,\label{eq1.1d}
	\end{alignat}
\end{subequations}
where $D = (0,L)^2 \subset \mathbb{R}^d \, (d=2,3)$ represents a period of the periodic domain in $\mathbb{R}^d$, $\vu$ and $p$ stand for respectively the velocity field and the pressure of the fluid,
$\vB$ is an operator-valued random field, $\{\vW(t); t\geq 0\}$ denotes an $\vL^2(D)$-valued
$Q$-Wiener process, and $\vf$ is a body force function (see Section \ref{sec2} for their precise definitions).  Here we seek periodic-in-space solutions $(\vu,p)$ with period $L$, that is, 
$\vu(t,{\bf x} + L{\bf e}_i) = \vu(t,{\bf x})$ and $p(t,{\bf x}+L{\bf e}_i)=p(t,{\bf x})$ 
almost surely 
and for any $(t, {\bf x})\in (0,T)\times \mathbb{R}^d$  and $1\leq i\leq d$, where 
$\{\bf e_i\}_{i=1}^d$ denotes the canonical basis of $\mathbb{R}^d$.

The system \eqref{eq1.1a} is a stochastic perturbation of the deterministic Stokes system by introducing 
a multiplicative noise force term $\vB(\cdot)d\vW(s)$ 
and it has been used to model turbulent fluids (cf. \cite{Bensoussan95, BT1973,HM06,Temam01}).  The stochastic Stokes system is a simplified model of the full stochastic Navier-Stokes equations by omitting the nonlinear term $(\vu\cdot\nab) \vu$ in the drift part of the stochastic Navier-Stokes equations. 
Although the deterministic Stokes equations is a linear PDE system  which has been well studied 
in the literature (cf. \cite{Girault_Raviart86, Temam01}  and the references therein),  the stochastic Stokes system  
\eqref{eq1.1a}  is intrinsically  nonlinear because the diffusion coefficient 
$\vB$ is nonlinear in the velocity $\vu$.  Due to the introduction of random forces it has been well known that 
the solution of problem \eqref{eq1.1} has very low regularities in time.  We refer the reader to  \cite{Bensoussan95, LRS03, PZ92} and the references therein for a detailed account about the well-posedness 
and regularities of the solution for system \eqref{eq1.1}.  

Besides their mathematical and practical importance, the stochastic Stokes (and Navier-Stokes) 
equations have been used as prototypical stochastic PDEs for developing efficient 
numerical methods and general numerical analysis techniques  for analyzing 
numerical methods for stochastic PDEs. In that regard several works have been reported 
in the literature \cite{CP12,Feng,Feng1,Breit,CHP12,BBM14}. Euler-Maruyama time dsicretization and divergence-free finite element space discretization was proposed and analyzed in 
\cite{CP12} in the case of divergence-free noises (i.e., $\mathbf{B}(\vu)$ is divergence-free). 
Optimal order error estimates in strong norm for the velocity approximation were obtained. 
In \cite{Feng, Feng1} the authors considered the general noise and analyzed the standard 
and a modified mixed finite element methods as well as pressure stabilized methods for 
space discretization, suboptimal order error estimates were proved in \cite{Feng} for the velocity approximation in strong norm and for the pressure approximation in a time-averaged 
norm, all these suboptimal order error estimates were improved to optimal order for 
a Helmholtz projection-enhanced mixed finite element 
in \cite{Feng1} (also see \cite{Breit} for a similar approach). 
It should be noted that the reason for measuring the pressure errors 
in a time-averaged norm is because the low regularity of the pressure field which is 
only a distribution in general and the numerical tests of \cite{Feng,Feng1} suggest 
that these error estimates are sharp. 
In \cite{CHP12} the authors proposed a Chorin time-splitting finite element method
for problem \eqref{eq1.1} and proved a suboptimal convergence rate in strong norm 
for the velocity approximation in the case of divergence-free noises.
In \cite{BBM14} the authors proposed an iterative splitting scheme for 
stochastic Navier-Stokes equations and a strong convergence in probability
was established in the 2-D case for the velocity approximation. 
In a recent work \cite{BM18}, the authors proposed another time-splitting scheme 
and proved its strong $L^2$ convergence for the velocity approximation.

Compared to the recent advances on mixed finite element 
methods \cite{CP12, Feng, Feng1}, the numerical analysis of the well-known Chorin  projection/splitting scheme for the stochastic Stokes equations lags behind.  To the 
best of our knowledge, the only analysis result obtained in 
\cite{CHP12} is the optimal convergence in the energy norm for the velocity 
approximation in the case of divergence-free noises (i.e., $\mathbf{B}(\vu)$
is divergence-free). 
Several natural and important questions arise and must be addressed for a better  
understanding of the 
Chorin projection scheme for problem \eqref{eq1.1}. Among them are {\em (i) Does the pressure approximation
	converge even when the noise is divergence-free? If so, in what sense and what rate? 
	(ii) Does the Chorin projection scheme converge (for both the velocity and pressure
	approximations) for general noises? If so, in what sense and what rate? 
	(iii) Could the performance of the standard Chorin projection scheme be improved
	one way or another in the case of general noises?}  
The primary objective this paper is to provide a positive answer to each of 
the above questions. 

As it was shown in \cite{CHP12}, the adaptation of the standard deterministic Chorin 
projection scheme to problem \eqref{eq1.1} is straightforward 
(see Algorithm 1 of Section \ref{sec3}).
The idea of the Chorin scheme is to separate the computation of the velocity 
and pressure at each time step which is done by solving two decoupled Poisson problems
and the divergence-free constraint for the velocity approximation is enforced by 
a Helmholtz projection technique which can be easily obtained using the solutions of
the two Poisson problems. 
The Chorin scheme also can be compactly rewritten as a pressure stabilization scheme 
at each time step as follows (cf. \cite{CHP12}):
\begin{subequations}\label{eq1.2}
	\begin{alignat}{2}\label{eq1.2a}
		\tilde{\vu}^{n+1} - \tilde{\vu}^n - k\nu\Delta \tilde{\vu}^{n+1} + k\nab p^{n} &= k\vf^{n+1} + \vB(\tilde{\vu}^n)\Delta W_{n+1} &&\qquad \mbox{a.s. in }D_T,\\
		\div \tilde{\vu}^{n+1} - k\Delta p^{n+1} &= 0 &&\qquad\mbox{a.s. in }D_T, \label{eq1.2b}\\
		\p_{\bf n} p^{n+1} &= 0 &&\qquad\mbox{a.s. on }\p D_T,  \label{eq1.2c}
	\end{alignat}
	where $\p_{\bf n} p^{n+1}$  
	denotes the normal  derivative of $p^{n+1}$ and $k$ is the time step size.
\end{subequations}
One of advantages of the above Chorin scheme is that the spatial approximation  
spaces for $\tilde{\vu}^{n+1}$ and $p^{n+1}$ can be chosen independently, so unlike in the mixed finite element method, they are not required to satisfy an {\em inf-sup} condition. 
Notice that a time lag on pressure appears in equation \eqref{eq1.2a} which causes most of difficulties 
in the convergence analysis (cf. \cite{Ran, Guermond06, Prohl97, CHP12}). We also note that the term $-k\Delta p^{n+1}$ in equation \eqref{eq1.2b} is known as a pressure stabilization term.

To improve the convergence of the standard Chorin scheme, we adopt a
Helmholtz projection technique as used in \cite{Feng1} (also see \cite{Breit}). 
At each time step we first perform the Helmholtz decomposition  $\vB(\tilde{\vu}^n)=\pmb{\eta}^n+\nabla \xi^n$ and then rewrite \eqref{eq1.2a} as 
\begin{align}\label{eq1.3}
	\tilde{\vu}^{n+1} - \tilde{\vu}^n - k\nu\Delta \tilde{\vu}^{n+1} + k\nab r^{n} = k\vf^{n+1} +  \pmb{\eta}^n\Delta W_{n+1} \qquad \mbox{a.s. in }D_T,
\end{align}
where $r^n= p^n -k^{-1} \xi^n\Delta W_{n+1}$. Our modified Chorin scheme consists of  
\eqref{eq1.3}, \eqref{eq1.2b}--\eqref{eq1.2c} and the Helmholtz decomposition 
$\vB(\tilde{\vu}^n)=\pmb{\eta}^n+\nabla \xi^n$. Since $\pmb{\eta}^n$ is divergence-free, 
it turns out that the finite element approximation of the modified Chorin scheme 
has better convergence properties. Notice that $p^n$ can be recovered from 
$r^n$ via the simple algebraic relation $p^n= r^n +k^{-1} \xi^n\Delta W_{n+1}$. 
	
	The main contributions of this paper are summarized below.
	
	\begin{itemize}
		\item We proved the following error estimates in strong norms 
		for the Chorin-$P_1$ finite element method (see Algorithm 3) for problem \eqref{eq1.1}
		with general multiplicative noises:
		
		\begin{align*}
		\biggl(\mE\Bigl[k\sum_{m=0}^M\|\vu(t_m) - \tilde{\vu}^m_h\|^2\Bigr]\biggr)^{\frac12} &+ \max_{0\leq \ell \leq M} \biggl(\mE\Bigl[\Bigl\|k\sum_{m=0}^{\ell}\nab(\vu(t_m) - \tilde{\vu}^m_h)\Bigr\|^2\Bigr]\biggr)^{\frac12}\\\nonumber &\qquad\qquad\qquad\leq C\Bigl(k^{\frac14} + hk^{-\frac12}\Bigr),\\
		\biggl(\mE\Bigl[k\sum_{m=0}^M\Bigl\|P(t_m) &- k\sum_{n=0}^m p^n_h\Bigr\|^2\Bigr]\biggr)^{\frac12} \leq C\Bigl(k^{\frac14} + hk^{-\frac12}\Bigr),
	\end{align*}
		where $(\vu(t_m), P(t_m) )$ are the solution to 
		problem \eqref{eq1.1} while $(\tilde{\vu}_{h}^m, p_h^m)$ are the discrete solution of Algorithm 3, see Sections \ref{sec2} and \ref{sec4} for their precise definitions. 
		
		\item We proposed a modified Chorin-$P_1$ finite element method (see Algorithm 4) and 
		proved the following error estimates in strong norms for problem \eqref{eq1.1}
		with general multiplicative noises:
		\begin{align*}
			&\max_{1\leq m \leq M} \Bigl(\mathbb{E}\bigl[ \bigl\|\vu(t_m)-\tilde{\vu}^m_{h} \bigr\|^2\,\bigl]\Bigr)^{\frac12}
			+\biggl( \mathbb{E}\biggl[ k\sum_{m=1}^{M} \bigl\|\nabla (\vu(t_n) -\vu^n_{h}) \bigr\|^2\,\biggr] \biggr)^{\frac12} \\
			&\hskip 1.9in \leq  C\Bigl( k^{\frac12} + h + k^{-\frac12} h^2 \Bigr), \\ 
			&\biggl(\mathbb{E} \biggl[ \bigg\|R(t_m) -k\sum^m_{n=1}r^n_{h} \biggr\|^2\, \biggr]\biggr)^{\frac12} 	
			+ \biggl( \mathbb{E}\biggl[\bigg\|P(t_m) -k\sum^m_{n=1}p^n_{h} \biggr\|^2\,\biggr] \biggr)^{\frac12} \\
			&\hskip 1.9in \leq C\Bigl(k^{\frac12} + h + k^{-\frac12} h^2 \Bigr)  .
		\end{align*}
		where $(\vu(t_m), P(t_m))$ is the solution to problem \eqref{eq1.1} and $R(t)$ is defined
		as 	the time-average of the pseudo pressure $r(t)$ while $(\vu^m_h, r^m_h, p^m_h)$ is 
		the solution of Algorithm 4, see Sections \ref{sec2} and \ref{sec4} for their precise definitions. 
	\end{itemize}
	We note that all spatial error constants contain a growth factor $k^{-\frac12}$, which 
	explains the deteriorating performance of the standard (and modified) Chorin scheme 
	when $k\to 0$ and the mesh size $h$ is fixed as observed in the numerical tests of \cite{CHP12}. The numerical 
	experiments to be given in Section \ref{sec5} indicate that the dependence 
	on factor $k^{-\frac12}$ is sharp.

	The remainder of this paper is organized as follows. In Section \ref{sec2}, we first 
	introduce some space notations and state the assumptions on the initial data and on $\vB$ as well as recall the definition of solutions to \eqref{eq1.1}. We then state and prove a H\"older continuity property for the pressure $p$ in a time-averaged norm. In Section \ref{sec3}, we 
	define the standard Chorin projection scheme as Algorithm 1 for problem \eqref{eq1.1} in Subsection \ref{sub3.1} and the modified Chorin scheme as Algorithm 2 in Subsection \ref{sub3.2}. The highlights of this section are to prove some uniform (in $k$) 
	stability estimates which are very useful for error analysis later. In Section \ref{sec4}, we formulate the finite element 
	spatial discretization for both the standard Chorin and modified Chorin schemes in Algorithm 3 and 4, respectively and prove the quasi-optimal error estimates for both algorithms as 
	summarized above. In Section \ref{sec5}, we present several numerical experiments 
	to gauge the performance of the proposed numerical methods and 
	to validate the sharpness of the proved error estimates.
	
	\section{Preliminaries}\label{sec2}
	Standard function and space notation will be adopted in this paper. 
	Let $\vH^1_0(D)$ denote the subspace of $\vH^1(D)$ whose ${\mathbb R}^d$-valued functions have zero trace on $\p D$, and $(\cdot,\cdot):=(\cdot,\cdot)_D$ denote the standard $L^2$-inner product, with induced norm $\Vert \cdot \Vert$. We also denote ${\bf L}^p_{per}(D)$ and ${\bf H}^{k}_{per}(D)$ as the Lebesgue and Sobolev spaces of the functions that are periodic and have vanishing mean, respectively. 
	Let $(\Omega,\cF, \{\cF_t\},\mP)$ be a filtered probability space with the probability measure $\mP$, the 
	$\sigma$-algebra $\cF$ and the continuous  filtration $\{\cF_t\} \subset \cF$. For a random variable $v$ 
	defined on $(\Omega,\cF, \{\cF_t\},\mP)$,
	${\mathbb E}[v]$ denotes the expected value of $v$. 
	For a vector space $X$ with norm $\|\cdot\|_{X}$,  and $1 \leq p < \infty$, we define the Bochner space
	$\bigl(L^p(\Omega,X); \|v\|_{L^p(\Omega,X)} \bigr)$, where
	$\|v\|_{L^p(\Omega,X)}:=\bigl({\mathbb E} [ \Vert v \Vert_X^p]\bigr)^{\frac1p}$.
	We also define 
	\begin{align*}
		{\mathbb H} &:= \bigl\{{\bf v}\in  \vL^2_{per}(D) ;\,\div {\bf v}=0 \mbox{ in }D\, \bigr\}\, , \\
		{\mathbb V} &:=\bigl\{{\bf v}\in  \vH^1_{per}(D) ;\,\div {\bf v}=0 \mbox{ in }D \bigr\}\, .
	\end{align*}
	
	We recall from \cite{Girault_Raviart86} that the (orthogonal) Helmholtz projection 
	${\bf P}_{{\mathbb H}}: \vL^2_{per}(D) \rightarrow {\mathbb H}$ is defined 
	by ${\bf P}_{{\mathbb H}} {\bf v} = \pmb{\eta}$ for every ${\bf v} \in \vL^2_{per}(D)$, 
	where $(\pmb{\eta}, \xi) \in {\mathbb H} \times H^1_{per}(D)/\mathbb{R}$ is a unique tuple such that 
	$${\bf v} = \pmb{\eta} + \nabla \xi\, , $$
	and $\xi\in H^1_{per}(D)/\mathbb{R}$ solves the following Poisson problem 
	with the homogeneous Neumann boundary condition:
	\begin{equation}\label{poisson}
		\Delta \xi = \div {\bf v}.
	\end{equation}
	We also define the Stokes operator ${\bf A} := -{\bf P}_{\mathbb H} \Delta: {\mathbb V} \cap \vH^2_{per}(D) \rightarrow {\mathbb H}$.

	Throughout this paper we assume that  ${\bf B}: \vL^2_{per}(D) \rightarrow \vL^2_{per}(D)$ is a 
	Lipschitz continuous mapping and has linear growth, that is, 
	there exists a constant $C > 0$ such that for all ${\bf v}, {\bf w} \in \vL^2_{per}(D)$ 
	\begin{subequations}\label{eq2.6}
		\begin{align}\label{eq2.6a}
			\|{\bf B}({\bf v})-{\bf B}({\bf w})\|  &\leq C\|{\bf v}-{\bf w}\|\, , \\
			\|{\bf B}({\bf v})\|  &\leq C \bigl(1+ \|{\bf v}\| \bigr)\, ,   \label{eq2.6b}
			%\|\mathcal{D} \mathbf{B}\|_* &\leq C,\label{eq2.6c}
		\end{align}
		%but we emphasize that the results of this paper still hold without this assumption.}
	\end{subequations}
	
	Since we shall not explicitly track the dependence of all constants on $\nu$, 
	for ease of the presentation, unless it is stated otherwise, we shall set $\nu = 1$ in the rest of 
	the paper and assume that $\vf \in L^2(\Ome;L^2_{per}(D))$. In addition,  
	we shall use $C$ to denote a generic positive constant
	which may depend on $T$, the datum functions $\vu_0$ and $\vf$, and the domain $D$ 
	but is independent of the mesh parameter $h$ and $k$.
	
	\subsection{Variational formulation of the stochastic Stokes equations}\label{sec-2.2}
	We first define the variational solution concept for \eqref{eq1.1} and refer the reader to \cite{Chow07,PZ92} for a proof of its existence and uniqueness.
	
	\begin{definition}\label{def2.1} % (cf. \cite{LRS03} )
		Given $(\Omega,\cF, \{\cF_t\},\mP)$, let $W$ be an ${\mathbb R}$-valued Wiener process on it. 
		Suppose ${\bf u}_0\in L^2(\Omega, {\mathbb V})$ and $\vf \in L^2(\Ome;L^2((0,T);L^2_{per}(D)))$.
		An $\{\cF_t\}$-adapted stochastic process  $\{{\bf u}(t) ; 0\leq t\leq T\}$ is called
		a variational solution of \eqref{eq1.1} if ${\bf u} \in  L^2\bigl(\Omega; C([0,T]; {\mathbb V})) %\cap L^2\bigl(0,T; {\mathbb V}) \bigr)$	
		\cap L^2\bigl(\Ome;0,T;\vH^2_{per}(D)\bigr)$,
		and satisfies $\mP$-a.s.~for all $t\in (0,T]$
		\begin{align}\label{eq2.8a}
			\bigl({\bf u}(t),  {\bf v} \bigr) + \int_0^t  \bigl(\nab {\bf u}(s), \nab {\bf v} \bigr) 
			\,  ds&=({\bf u}_0, {\bf v})+ \int_0^t \big(\vf(s), \vv\big) \, ds \\\nonumber
			& \qquad+  {\int_0^t  \Bigl( {\bf B}\bigl({\bf u}(s)\bigr), {\bf v} \Bigr)\, dW(s)}  \qquad\forall  \, {\bf v}\in {\mathbb V}\, . 
		\end{align}

	\end{definition}

	\smallskip
	We cite the following H\"older continuity estimates for the variational solution whose proofs
	can be found in \cite{CHP12, Feng}. 
	
	\begin{lemma}\label{thm2.2}
		Suppose ${\bf u}_0 \in L^2\bigl(\Omega; {\mathbb V} \cap \vH^2_{per}(D)\bigr)$ and $\vf \in L^2(\Ome;C^{\frac12}([0,T]);H^1_{per}(D))$. Then there exists a constant $C \equiv C(D_T, \vu_0, \vf)>0$, such that the variational solution to problem \eqref{eq1.1} satisfies
		for $s,t \in [0,T]$
		\begin{subequations}
			\begin{align}\label{eq2.20a}
				& {\mathbb E}\bigl[\|{\bf u}(t)-{\bf u}(s)\|^2 \bigr]+ {\mathbb E}\Bigl[\int_s^t \|\nabla\bigl({\bf u}(\tau)-{\bf u}(s)\bigr)\| ^2 \, d\tau \Bigr]
				\leq C|t-s|\, ,\\ \label{eq2.20b}
				& {\mathbb E}\bigl[\|\nabla \bigl({\bf u}(t)-{\bf u}(s)\bigr)\|^2 \bigr]+ {\mathbb E}\Bigl[\int_s^t \|{\bf A}\bigl({\bf u}(\tau)-{\bf u}(s)\bigr)\|^2\, d\tau\Bigr] 
				\leq C|t-s|\, .
			\end{align}
		\end{subequations}
	\end{lemma}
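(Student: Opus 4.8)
The plan is to eliminate the pressure by applying the Helmholtz projection ${\bf P}_{\mathbb H}$ to \eqref{eq1.1a} (with $\nu=1$). Since ${\bf u}$ is divergence-free we have ${\bf P}_{\mathbb H}{\bf u}={\bf u}$, ${\bf P}_{\mathbb H}\nabla p=0$, and ${\bf P}_{\mathbb H}\Delta{\bf u}=-{\bf A}{\bf u}$, so ${\bf u}$ solves the Stokes-operator It\^o equation $d{\bf u}=\bigl(-{\bf A}{\bf u}+{\bf P}_{\mathbb H}{\bf f}\bigr)\,dt+{\bf P}_{\mathbb H}{\bf B}({\bf u})\,dW$. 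Fixing $s$ and writing ${\bf w}(\tau):={\bf u}(\tau)-{\bf u}(s)$ for $\tau\ge s$, the increment satisfies $d{\bf w}=\bigl(-{\bf A}{\bf w}-{\bf A}{\bf u}(s)+{\bf P}_{\mathbb H}{\bf f}\bigr)\,d\tau+{\bf P}_{\mathbb H}{\bf B}({\bf u})\,dW$ with ${\bf w}(s)=0$. I would prove both estimates by applying the It\^o formula to the appropriate energy functional, taking expectations so that the martingale term drops, and then closing the resulting relations with Young's inequality, the linear growth bound \eqref{eq2.6b}, the contraction property $\|{\bf P}_{\mathbb H}\cdot\|\le\|\cdot\|$, and the a priori bounds built into Definition \ref{def2.1}, namely $\sup_{\tau}\mE\bigl[\|{\bf u}(\tau)\|_{\vH^1}^2\bigr]<\infty$ and $\sup_\tau\mE\bigl[\|{\bf A}{\bf u}(\tau)\|^2\bigr]<\infty$.

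For \eqref{eq2.20a} I apply It\^o's formula to $\|{\bf w}(\tau)\|^2$, integrate over $[s,t]$, and take expectation. The stochastic integral vanishes and, using $({\bf A}{\bf w},{\bf w})=\|\nabla{\bf w}\|^2$ (valid for periodic divergence-free fields after integration by parts), one obtains
\begin{align*}
\mE\bigl[\|{\bf w}(t)\|^2\bigr]+2\,\mE\Bigl[\int_s^t\|\nabla{\bf w}\|^2\,d\tau\Bigr]
&=\mE\Bigl[\int_s^t\bigl(-2({\bf w},{\bf A}{\bf u}(s))+2({\bf w},{\bf P}_{\mathbb H}{\bf f})\\
&\qquad+\|{\bf P}_{\mathbb H}{\bf B}({\bf u})\|^2\bigr)\,d\tau\Bigr].
\end{align*}
Bounding the right-hand side with Young's inequality, the growth bound $\|{\bf P}_{\mathbb H}{\bf B}({\bf u})\|^2\le C(1+\|{\bf u}\|^2)$, and the uniform-in-$\tau$ controls on $\mE\|{\bf A}{\bf u}(s)\|^2$, $\mE\|{\bf f}\|^2$ and $\mE\|{\bf u}\|^2$, every term integrates to $C|t-s|$ except for a harmless $2\,\mE[\int_s^t\|{\bf w}\|^2\,d\tau]$. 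Writing $g(t):=\mE\|{\bf w}(t)\|^2$ gives $g(t)\le C|t-s|+2\int_s^t g(\tau)\,d\tau$, so Gr\"onwall's inequality yields $g(t)\le C|t-s|$; reinserting this bound controls the gradient term and establishes \eqref{eq2.20a}.

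For \eqref{eq2.20b} I apply It\^o's formula to $\|\nabla{\bf w}(\tau)\|^2$ (equivalently, I pair the equation for ${\bf w}$ against ${\bf A}{\bf w}$), integrate, and take expectation. Using ${\bf A}{\bf w}=-\Delta{\bf w}$ for periodic divergence-free ${\bf w}$, the drift produces $-\|{\bf A}{\bf w}\|^2-({\bf A}{\bf w},{\bf A}{\bf u}(s))+({\bf A}{\bf w},{\bf P}_{\mathbb H}{\bf f})$, while the It\^o correction now contributes $\|\nabla{\bf P}_{\mathbb H}{\bf B}({\bf u})\|^2$. Young's inequality absorbs the two cross terms into $\|{\bf A}{\bf w}\|^2$, leaving
\begin{align*}
\mE\bigl[\|\nabla{\bf w}(t)\|^2\bigr]+\mE\Bigl[\int_s^t\|{\bf A}{\bf w}\|^2\,d\tau\Bigr]
&\le\mE\Bigl[\int_s^t\bigl(2\|{\bf A}{\bf u}(s)\|^2+2\|{\bf f}\|^2\\
&\qquad+\|\nabla{\bf P}_{\mathbb H}{\bf B}({\bf u})\|^2\bigr)\,d\tau\Bigr].
\end{align*}
Since no cross term proportional to $\|\nabla{\bf w}\|^2$ survives, no Gr\"onwall argument is needed here; it remains only to bound the three terms on the right by $C|t-s|$.

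The \emph{main obstacle} is the It\^o-correction term $\|\nabla{\bf P}_{\mathbb H}{\bf B}({\bf u})\|^2$ in the last display: the standing hypotheses \eqref{eq2.6} control ${\bf B}$ only in the $\vL^2$-norm and so do not by themselves bound $\nabla{\bf B}({\bf u})$. Because ${\bf P}_{\mathbb H}$ is a Fourier multiplier on the periodic torus, it commutes with $\nabla$ and is an $\vH^1$-contraction, so $\|\nabla{\bf P}_{\mathbb H}{\bf B}({\bf u})\|\le\|\nabla{\bf B}({\bf u})\|$; the estimate then hinges on an $\vH^1$-type growth bound $\|{\bf B}({\bf v})\|_{\vH^1}\le C(1+\|{\bf v}\|_{\vH^1})$, which is precisely the regularity on ${\bf B}$ underlying the $\vH^2$-regularity of ${\bf u}$ asserted in Definition \ref{def2.1}. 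Granting it, the bound $\sup_\tau\mE\|{\bf u}(\tau)\|_{\vH^1}^2<\infty$ converts $\mE[\int_s^t\|\nabla{\bf B}({\bf u})\|^2\,d\tau]$ into the factor $C|t-s|$, and it is this pointwise-in-time control (rather than integrating over all of $[0,T]$) that produces the linear dependence on $|t-s|$ in every term. A secondary technical point is the justification of the It\^o formula for $\|\nabla{\bf w}\|^2$, which is legitimate in the Gelfand triple $\mathbb{V}\cap\vH^2_{per}(D)\hookrightarrow\mathbb{V}\hookrightarrow\mathbb{H}$ exactly because of the regularity ${\bf u}\in L^2(\Omega;L^2(0,T;\vH^2_{per}(D)))$.
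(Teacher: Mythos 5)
The paper does not prove this lemma; it is quoted from \cite{CHP12,Feng}, so there is no in-house argument to compare against, and your proposal must stand on its own. Your energy-method sketch (project with ${\bf P}_{\mathbb H}$, apply It\^o's formula to $\|{\bf w}\|^2$ and $\|\nabla{\bf w}\|^2$ for ${\bf w}={\bf u}(\cdot)-{\bf u}(s)$) is the standard route and structurally sound, but it contains one genuine gap: you repeatedly invoke $\sup_{\tau\in[0,T]}\mE\bigl[\|{\bf A}{\bf u}(\tau)\|^2\bigr]<\infty$ and assert that it is ``built into Definition \ref{def2.1}.'' It is not. The definition only gives ${\bf u}\in L^2\bigl(\Omega;L^2(0,T;\vH^2_{per}(D))\bigr)$, i.e.\ $\mE\bigl[\int_0^T\|{\bf A}{\bf u}\|^2\,d\tau\bigr]<\infty$, and the paper's stability estimate \eqref{eq2.9} is of the same time-integrated form; neither implies a bound on $\mE\bigl[\|{\bf A}{\bf u}(s)\|^2\bigr]$ that is uniform in $s$ (or even finiteness at every fixed $s$). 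This bound is indispensable for \eqref{eq2.20b}: after Young's inequality the drift cross term leaves $|t-s|\,\mE\bigl[\|{\bf A}{\bf u}(s)\|^2\bigr]$ on the right-hand side, which is not $O(|t-s|)$ uniformly in $s$ without it. Establishing $\sup_{\tau}\mE\bigl[\|{\bf A}{\bf u}(\tau)\|^2\bigr]\le C$ is itself a nontrivial higher-regularity estimate --- it is exactly what forces the hypotheses ${\bf u}_0\in L^2(\Omega;{\mathbb V}\cap\vH^2_{per}(D))$ and, implicitly, an $\vH^1$-type Lipschitz and growth condition on ${\bf B}$ beyond \eqref{eq2.6} --- and supplying it is the real content of the references the paper cites. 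You correctly identify the $\vH^1$-growth of ${\bf B}$ as the obstacle in the It\^o correction $\|\nabla{\bf P}_{\mathbb H}{\bf B}({\bf u})\|^2$, but the same missing ingredient already blocks the drift term one step earlier, so your proof is incomplete until this a priori estimate is either proved (by a further It\^o argument at the $\vH^2$ level) or explicitly cited.

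Two smaller points. For \eqref{eq2.20a} your route depends on the $\vH^2$ bound unnecessarily: integrating by parts once gives $-2({\bf w},{\bf A}{\bf u}(s))=-2(\nabla{\bf w},\nabla{\bf u}(s))\le\tfrac12\|\nabla{\bf w}\|^2+2\|\nabla{\bf u}(s)\|^2$, and $\sup_s\mE\bigl[\|\nabla{\bf u}(s)\|^2\bigr]\le C$ does follow from \eqref{eq2.9}, so \eqref{eq2.20a} closes with only first-order a priori information (alternatively, the integral form of the equation together with the It\^o isometry gives $\mE\bigl[\|{\bf u}(t)-{\bf u}(s)\|^2\bigr]\le C|t-s|$ directly, and the gradient integral in \eqref{eq2.20a} is then immediate from $\mE\bigl[\sup_\tau\|\nabla{\bf u}(\tau)\|^2\bigr]\le C$). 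Once the pointwise-in-time $\vH^2$ moment bound is in hand, the remaining steps of your argument for \eqref{eq2.20b} --- the commutation of ${\bf P}_{\mathbb H}$ with $\nabla$ on the torus, the identity $({\bf A}{\bf w},{\bf w})=\|\nabla{\bf w}\|^2$, and the absorption of the cross terms --- are correct, and they are precisely where the periodic boundary condition is used, consistent with the remark following the lemma.
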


	\begin{remark}
		We note that to ensure the H\"older continuity estimage \eqref{eq2.20b} is the only reason for restricting to  the periodic boundary condition in this paper. 
	\end{remark}
	
	%%%%%%
	%\subsection{Definition of the pressure}\label{sec-2.3}
	Definition \ref{def2.1} only defines the velocity $\mathbf{u}$ for \eqref{eq1.1}, 
	its associated pressure $p$ is subtle to define. In that regard we quote the following 
	theorem from \cite{Feng1}.

	\begin{theorem}\label{thm 2.2}
		Let $\{{\bf u}(t) ; 0\leq t\leq T\}$ be the variational solution of \eqref{eq1.1}. There exists a unique adapted process 
		{$P\in {L^2\bigl(\Omega, L^2(0,T; H^1_{per}(D)/{\mathbb R})\bigr)}$} such that $(\mathbf{u}, P)$ satisfies $\mP$-a.s.~for all $t\in (0,T]$
		\begin{subequations}\label{eq2.100}
			\begin{align}\label{eq2.10a}
				&\bigl({\bf u}(t),  {\bf v} \bigr) + \int_0^t  \bigl(\nab {\bf u}(s), \nab {\bf v} \bigr) \, ds
				- \bigl(  \div \mathbf{v}, P(t) \bigr) \\
				&=({\bf u}_0, {\bf v}) + \int_0^t \big(\vf(s), \vv\big) \, ds 
				+  {\int_0^t  \bigl( {\bf B}\bigl({\bf u}(s)\bigr), {\bf v} \bigr)\, dW(s)}  \,\,\, \forall  \, {\bf v}\in \vH^1_{per}(D; \mathbb{R}^d)\, , \nonumber \\ 
				&\bigl(\div {\bf u}, q \bigr) =0 \qquad\forall \, q\in L^2_0(D) := \{ q \in L^2_{per}(D):\, (q,1) = 0\}\,  .  \label{eq2.10b}
			\end{align}
		\end{subequations}
	\end{theorem}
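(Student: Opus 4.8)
The plan is to recover the pressure process $P$ pathwise in $\omega$ and pointwise in $t$ by combining an integration by parts with the Helmholtz decomposition introduced in Section~\ref{sec2}; the only genuinely stochastic ingredients are the handling of the It\^o integral and the verification that the resulting $P$ is an adapted, square-integrable process. First I would exploit the regularity $\vu\in L^2\bigl(\Ome;L^2(0,T;\vH^2_{per}(D))\bigr)$ from Lemma~\ref{thm2.2} to integrate the viscous term in \eqref{eq2.8a} by parts (periodicity removes the boundary contribution), and I would commute the stochastic integral with the $L^2$-pairing against a \emph{deterministic} test function, $\int_0^t \bigl(\vB(\vu),\vv\bigr)\,dW = \bigl(\int_0^t \vB(\vu)\,dW,\vv\bigr)$, which is valid since $(\cdot,\vv)$ is a bounded linear functional and $W$ is scalar. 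Introducing
\begin{equation*}
	{\bf g}(t) := \vu(t) - \vu_0 - \int_0^t \bigl(\Delta\vu(s) + \vf(s)\bigr)\,ds - \int_0^t \vB(\vu(s))\,dW(s) \in \vL^2_{per}(D),
\end{equation*}
this recasts Definition~\ref{def2.1} as the statement that $\bigl({\bf g}(t),\vv\bigr)=0$ for all $\vv\in\bV$, holding $\mP$-a.s.\ simultaneously for all $t$.

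By density of $\bV$ in $\bH$, the vanishing of $\bigl({\bf g}(t),\vv\bigr)$ on $\bV$ means precisely that ${\bf g}(t)\perp\bH$ in $\vL^2_{per}(D)$. Since the orthogonal complement of $\bH$ is exactly the space of gradients $\{\nab\xi:\xi\in H^1_{per}(D)/\mathbb{R}\}$, the Helmholtz decomposition gives a unique $P(t)\in H^1_{per}(D)/\mathbb{R}$ with ${\bf g}(t) = -\nab P(t)$, equivalently $\bigl({\bf g}(t),\vv\bigr) = \bigl(\div\vv,P(t)\bigr)$ for every $\vv\in\vH^1_{per}(D;\mathbb{R}^d)$. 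Reinserting the definition of ${\bf g}(t)$ and undoing the integration by parts yields \eqref{eq2.10a}, while \eqref{eq2.10b} is immediate from $\vu(t)\in\bV$. Uniqueness of $P(t)$ follows from the injectivity of $\nab$ on $H^1_{per}(D)/\mathbb{R}$.

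It then remains to verify the two features that distinguish the stochastic setting from the classical de~Rham argument. For adaptedness, each term defining ${\bf g}(t)$ is $\{\cF_t\}$-adapted --- the It\^o integral in particular --- and admits a version continuous in $t$, so ${\bf g}$ is an adapted process with jointly measurable trajectories; since $P(t)$ is obtained from ${\bf g}(t)$ by solving the \emph{deterministic} periodic Poisson problem $\Delta P(t) = -\div\,{\bf g}(t)$ (cf.\ \eqref{poisson}), whose solution operator is a fixed bounded linear map, adaptedness and joint measurability in $(\omega,t)$ are preserved. For integrability, the relation ${\bf g}(t)=-\nab P(t)$ gives $\|\nab P(t)\| = \|{\bf g}(t)\|$, and the mean-zero normalization together with the Poincar\'e inequality gives $\|P(t)\|_{H^1_{per}(D)/\mathbb{R}}\le C\|{\bf g}(t)\|$. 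I would then bound $\mE\bigl[\int_0^T\|{\bf g}(t)\|^2\,dt\bigr]$ term by term, controlling $\vu(t)$, $\vu_0$ and $\int_0^t(\Delta\vu+\vf)\,ds$ through $\vu\in L^2\bigl(\Ome;C([0,T];\bV)\bigr)\cap L^2\bigl(\Ome;L^2(0,T;\vH^2_{per}(D))\bigr)$ and $\vf\in L^2\bigl(\Ome;L^2_{per}(D)\bigr)$, and treating the stochastic term by the It\^o isometry followed by the linear growth \eqref{eq2.6b}, $\mE\bigl[\|\int_0^t\vB(\vu)\,dW\|^2\bigr] = \mE\bigl[\int_0^t\|\vB(\vu)\|^2\,ds\bigr]\le C\int_0^t\bigl(1+\mE[\|\vu(s)\|^2]\bigr)\,ds$. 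This delivers $P\in L^2\bigl(\Ome;L^2(0,T;H^1_{per}(D)/\mathbb{R})\bigr)$.

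I expect the main difficulty to lie not in any single estimate but in the bookkeeping around the stochastic integral: one must ensure that the pathwise Helmholtz construction can be carried out on a single full-measure event and for all $t$ at once, so that $P$ is a bona fide adapted process rather than a family of pointwise-defined objects. This is exactly why I route the construction through the deterministic projection ${\bf P}_{\bH}$ and the Poisson solver --- which commute with expectation and preserve measurability --- rather than invoking an abstract inf-sup/Ne\v{c}as duality separately at each $(\omega,t)$. A secondary point worth stressing is that the $H^1$- (rather than merely $L^2$-) regularity of $P$ rests on $\Delta\vu\in\vL^2_{per}(D)$, i.e.\ on the $\vH^2_{per}$-regularity of Lemma~\ref{thm2.2}, which is precisely where the periodic boundary condition enters.
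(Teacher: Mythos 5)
The paper does not prove this theorem: it is quoted verbatim from \cite{Feng1}, so there is no in-paper argument to compare against. Your proposal is the standard construction and, as far as I can check it against the paper, it is correct: you use the $\vH^2_{per}$-regularity to rewrite the variational identity as $(\mathbf{g}(t),\vv)=0$ for all $\vv\in\bV$ with $\mathbf{g}(t)=\vu(t)-\vu_0-\int_0^t(\Delta\vu+\vf)\,ds-\int_0^t\vB(\vu)\,dW$, pass to $\mathbf{g}(t)\perp\bH$ by density, and invoke the $\vL^2_{per}=\bH\oplus\nab H^1_{per}$ decomposition to get $\nab P(t)=-\mathbf{g}(t)$. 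This is exactly the pointwise representation the paper itself relies on downstream (it is the identity ``from its definition of $P(t)$'' used in the proof of Lemma \ref{thm2.4}, and the analogue \eqref{eq2.8e} for $R$), so your route is consistent with how the authors use the theorem. The adaptedness and $L^2(\Omega;L^2(0,T;H^1_{per}/\mathbb{R}))$ bounds via the deterministic Poisson solver, It\^o isometry and \eqref{eq2.6b} are the right ingredients. One small refinement: your uniqueness remark (``injectivity of $\nab$'') only covers the $P$ you constructed; to conclude uniqueness among \emph{all} processes satisfying \eqref{eq2.10a} you should first note that \eqref{eq2.10a} forces $(\div\vv,P_1(t)-P_2(t))=0$ for every $\vv\in\vH^1_{per}(D)$ and then use the surjectivity of the divergence onto $L^2_{per}(D)/\mathbb{R}$ (equivalently the inf-sup inequality \eqref{inf-sup} quoted later in the paper) to deduce $P_1(t)=P_2(t)$ in the quotient space.
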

	
	System \eqref{eq2.100} is a mixed formulation for the stochastic Stokes 
	system \eqref{eq1.1}, where the (time-averaged) pressure $P$ is defined.
	The distributional derivative $p := \frac{\partial P}{\partial t}$, which was
	shown to belong to $L^1\bigl(\Omega; W^{-1,\infty}(0,T; H^1_{per}(D)/{\mathbb R})\bigr)$,
	was defined as the pressure. Below, we also define another time-averaged ``pressure" 
	\begin{equation}\label{R_t}
		R(t) := P(t) - \int_0^t\xi(s)\, dW(s),
	\end{equation} 
	using the Helmholtz decomposition ${\bf B}(\vu(t)) = \pmb{\eta}(t) + \nabla \xi(t)$, where 
	$\xi\in H^1_{per}(D)/\mathbb{R}$ ${\mathbb P}\mbox{-a.s.}$ such that 
	\begin{equation}\label{eq2.8f} 
		\bigl(\nabla \xi(t), \nabla \phi \bigr) =  \bigl( {\bf B}(\vu(t)) , \nabla \phi 
		\bigr)\qquad \forall\, \phi \in H^1_{per}(D)\, .
	\end{equation}
	Then,  it is easy to check that ${\mathbb P}\mbox{-a.s.}$
	{\begin{equation}\label{eq2.8e}
			\nabla R(t)= -\mathbf{u}(t)+ \int_0^t \vu(s)\, ds+\mathbf{u}_0 + \int_0^t \vf(s)\, ds +\int_0^t \pmb{\eta}(s)\, dW(s) 
			\quad\forall  \, t\in (0,T).
		\end{equation}
	}
	The process $\{R(t); 0\leq t\leq T\}$ will  also be approximated 
	in our numerical methods.

	Next, we establish some stability estimates for the velocity $\vu$ and the pressure $P$ in the following lemma.
		
		\begin{lemma}
			Suppose that $\vu_0 \in L^2(\Ome;\mV)$. Let $(\vu, P)$ solve \eqref{eq2.100}. Then there exists a constant $C \equiv C(D_T, \vu_0,\vf)$ such that
			\begin{align}
				\label{eq2.9}	\mE\Bigl[\sup_{0\leq t \leq T}\|\nab\vu(t)\|^2\Bigr] + \mE\biggl[\int_0^T\|{\bf A}\vu(s)\|^2\, ds\biggr]&\leq C,\\
				\label{eq210}	\sup_{0\leq t \leq T}\mE\Bigl[\|\nab P(t)\|^2\Bigr] &\leq C.
			\end{align}
		\end{lemma}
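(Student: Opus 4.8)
The plan is to establish the two bounds separately: \eqref{eq2.9} by a higher-order energy argument applied to the velocity equation, and \eqref{eq210} by deriving an explicit representation of $\nab P(t)$ from the mixed formulation \eqref{eq2.100} and then inserting \eqref{eq2.9}. As a preparatory step I would first record the basic energy estimate $\mE[\sup_{0\le t\le T}\|\vu(t)\|^2]+\mE[\int_0^T\|\nab\vu\|^2\,ds]\le C$, obtained by applying It\^o's formula to $\|\vu(t)\|^2$, using $(\vu,{\bf A}\vu)=\|\nab\vu\|^2$, the linear growth \eqref{eq2.6b} to control the It\^o correction $\|{\bf P}_{\mathbb H}\vB(\vu)\|^2\le C(1+\|\vu\|^2)$, the Burkholder--Davis--Gundy (BDG) inequality for the martingale term, and Gronwall's inequality. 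This uses only the $L^2$-growth of $\vB$ and serves as the stepping stone for the absorption arguments below.

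For \eqref{eq2.9} I would test the Helmholtz-projected velocity equation $d\vu=(-{\bf A}\vu+{\bf P}_{\mathbb H}\vf)\,dt+{\bf P}_{\mathbb H}\vB(\vu)\,dW$ with ${\bf A}\vu$, i.e. apply It\^o's formula to $\|\nab\vu(t)\|^2=({\bf A}\vu,\vu)$, which is licit since the variational solution already satisfies $\vu\in L^2(\Ome;L^2(0,T;\vH^2_{per}(D)))$. This yields, $\mP$-a.s.,
\begin{align*}
\|\nab\vu(t)\|^2+2\int_0^t\|{\bf A}\vu\|^2\,ds &=\|\nab\vu_0\|^2+2\int_0^t({\bf A}\vu,{\bf P}_{\mathbb H}\vf)\,ds\\
&\quad+2\int_0^t({\bf A}\vu,{\bf P}_{\mathbb H}\vB(\vu))\,dW+\int_0^t\|\nab{\bf P}_{\mathbb H}\vB(\vu)\|^2\,ds.
\end{align*}
The drift forcing term is absorbed via $2({\bf A}\vu,{\bf P}_{\mathbb H}\vf)\le\tfrac12\|{\bf A}\vu\|^2+2\|\vf\|^2$. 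After taking the supremum in $t$ and then expectation, the martingale is handled by BDG followed by Young's inequality, $\mE[\sup_t|\!\int_0^t({\bf A}\vu,{\bf P}_{\mathbb H}\vB(\vu))\,dW|]\le\delta\,\mE\int_0^T\|{\bf A}\vu\|^2\,ds+C_\delta\,\mE[\sup_t(1+\|\vu\|^2)]$, and the $\delta$-term is absorbed into the left-hand side (legitimate because $\mE\int_0^T\|{\bf A}\vu\|^2\,ds<\infty$ a priori), while $\mE[\sup_t\|\vu\|^2]\le C$ comes from the basic energy estimate. The remaining piece is the It\^o correction $\int_0^t\|\nab{\bf P}_{\mathbb H}\vB(\vu)\|^2\,ds$, which is the crux. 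On the periodic torus ${\bf P}_{\mathbb H}$ is a Fourier multiplier that commutes with $\nab$ and is bounded on $\vH^1_{per}(D)$, so $\|\nab{\bf P}_{\mathbb H}\vB(\vu)\|\le\|\nab\vB(\vu)\|$; combined with an $H^1$-growth bound on $\vB$ one controls this by $C(1+\|\nab\vu\|^2)$, after which Gronwall closes the estimate. I expect this correction term to be the main obstacle, since the naive $L^2\to L^2$ hypothesis \eqref{eq2.6} does not by itself control $\|\nab\vB(\vu)\|$; the periodic geometry (which also underlies Lemma \ref{thm2.2}) is exactly what makes the argument go through.

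For \eqref{eq210} I would derive a pointwise-in-space identity for $\nab P$. Choosing ${\bf v}\in\vH^1_{per}(D)$ in \eqref{eq2.10a}, integrating by parts in $\int_0^t(\nab\vu,\nab{\bf v})\,ds=\int_0^t({\bf A}\vu,{\bf v})\,ds$ and $-(\div{\bf v},P(t))=({\bf v},\nab P(t))$ (no boundary contributions by periodicity), and using that \eqref{eq2.10a} holds for all such ${\bf v}$, gives
\begin{equation*}
\nab P(t)=-\vu(t)-\int_0^t{\bf A}\vu(s)\,ds+\vu_0+\int_0^t\vf(s)\,ds+\int_0^t\vB(\vu(s))\,dW(s),
\end{equation*}
in parallel with the identity \eqref{eq2.8e} for $R$. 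Taking $\|\cdot\|$, squaring, and passing to expectation, each of the five terms is controlled uniformly in $t\in[0,T]$: $\mE\|\vu(t)\|^2$ and $\mE\|\vu_0\|^2$ by the basic energy estimate; $\mE\|\int_0^t{\bf A}\vu\,ds\|^2\le T\,\mE\int_0^T\|{\bf A}\vu\|^2\,ds\le C$ by Cauchy--Schwarz in time together with \eqref{eq2.9}; $\mE\|\int_0^t\vf\,ds\|^2\le T\,\mE\int_0^T\|\vf\|^2\,ds$; and the stochastic integral by the It\^o isometry, $\mE\|\int_0^t\vB(\vu)\,dW\|^2=\mE\int_0^t\|\vB(\vu)\|^2\,ds\le C\,\mE\int_0^T(1+\|\vu\|^2)\,ds\le C$. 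Summing yields $\sup_{0\le t\le T}\mE[\|\nab P(t)\|^2]\le C$, completing the proof. Note that the second bound genuinely depends on the first through the $\int_0^t{\bf A}\vu\,ds$ term.
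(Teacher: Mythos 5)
Your proposal is correct and follows essentially the same route as the paper: the bound \eqref{eq210} is obtained exactly as in the paper's proof, by reading off the identity for $\nabla P(t)$ from \eqref{eq2.10a} and estimating the five terms via the energy bound, Cauchy--Schwarz in time, the It\^o isometry and \eqref{eq2.9}, while for \eqref{eq2.9} the paper simply cites \cite[Theorem 4.4 and Section 5]{CHP12} and your It\^o argument applied to $\|\nabla\vu\|^2$ is the standard proof behind that citation. You are also right to flag that controlling the It\^o correction $\|\nabla{\bf P}_{\mathbb H}\vB(\vu)\|^2$ requires an $H^1$-type linear growth bound on $\vB$ beyond the stated $L^2$ hypotheses \eqref{eq2.6}; the paper uses such a bound implicitly elsewhere (e.g.\ in \eqref{eq3.16}) and inherits it from the cited references.
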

		\begin{proof}
			The proof of \eqref{eq2.9} is standard by using It\^o's formula and can be found in \cite[Theorem 4.4 and Section 5]{CHP12}. To prove \eqref{eq210},  using 
			\eqref{eq2.10a} can easily get  
			\begin{align}\label{eq2.10}
				\mE\big[\|\nab P(t)\|^2\big] &\leq C\bigg(\mE\big[\|\vu(t)\|^2\big] + \mE\bigg[\int_0^t\|\Delta \vu(s)\|^2\, ds\bigg] + \mE\bigg[\int_0^t \|\vu(s)\|^2\,ds\bigg] \\\nonumber
				&\qquad + \mE\bigg[\int_0^t\|\vf(s)\|^2\, ds\bigg]\bigg).
			\end{align}
			Then, the desired estimate from immediately after taking supreme over all $t \in [0,T]$ and using \eqref{eq2.9}.
		\end{proof}

	We finish this section by establishing the following H\"older continuity result for $P$, which is used for the error analysis in sections.  
	
	\begin{lemma}\label{thm2.4} 
		Suppose that $\vu \in L^2(\Ome;C([0,T];\bV)) \cap L^2(\Ome;0,T;\vH^2_{per}(D))$, $\vf \in L^2(\Ome;$ $0,T; \vL^2_{per}(D))$ and $P \in L^2(\Ome;L^2(0,T;H^1_{per}(D/\mathbb{R})))$. Then, there holds
		\begin{align}\label{holder_pressure}
			\mE\big[\|\nab\big(P(s) - P(t)\big)\|^2\big] \leq C|s-t|\qquad \forall s,t \in [0,T],
		\end{align}
		where the constant $C>0$ depends on $D_T$, $\vu_0$ and $\vf$.
	\end{lemma}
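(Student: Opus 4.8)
The plan is to express $\nab P(t)$ explicitly as an $\vL^2$-valued It\^o process and then estimate the increment $\nab\bigl(P(t)-P(s)\bigr)$ termwise. Solving the mixed formulation \eqref{eq2.10a} for the pressure gradient (equivalently, combining \eqref{R_t}, \eqref{eq2.8e} and \eqref{eq2.8f}) gives, $\mP$-a.s. for every $t\in(0,T)$, the representation
\[
\nab P(t) = -\vu(t) + \vu_0 + \int_0^t \Delta\vu(\tau)\,d\tau + \int_0^t \vf(\tau)\,d\tau + \int_0^t \vB\bigl(\vu(\tau)\bigr)\,dW(\tau);
\]
this is obtained from \eqref{eq2.10a} by choosing $\vv=\nab\phi$, integrating by parts using the periodic boundary conditions, and recalling that $\vu(t)\in\mV$ is divergence-free. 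Assuming without loss of generality that $s<t$ and subtracting this identity at the two times yields
\[
\nab\bigl(P(t)-P(s)\bigr) = -\bigl(\vu(t)-\vu(s)\bigr) + \int_s^t \Delta\vu(\tau)\,d\tau + \int_s^t \vf(\tau)\,d\tau + \int_s^t \vB\bigl(\vu(\tau)\bigr)\,dW(\tau).
\]

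Next I would take the $\vL^2$-norm, apply the elementary inequality $\|a+b+c+d\|^2\le 4\bigl(\|a\|^2+\|b\|^2+\|c\|^2+\|d\|^2\bigr)$, and take expectations, which reduces \eqref{holder_pressure} to four separate bounds. The velocity increment is controlled by the H\"older continuity \eqref{eq2.20a}, $\mE\bigl[\|\vu(t)-\vu(s)\|^2\bigr]\le C|t-s|$. The two Bochner integrals are handled by the Cauchy--Schwarz inequality in time, e.g. $\bigl\|\int_s^t\Delta\vu(\tau)\,d\tau\bigr\|^2\le|t-s|\int_s^t\|\Delta\vu(\tau)\|^2\,d\tau$, followed by the stability estimate \eqref{eq2.9}; here one uses that on the periodic domain $\Delta\vu$ is again divergence-free, so that $\|\Delta\vu\|=\|{\bf A}\vu\|$ and hence $\mE\bigl[\int_0^T\|\Delta\vu\|^2\,d\tau\bigr]\le C$. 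The $\vf$-integral is estimated identically using $\vf\in L^2(\Ome;0,T;\vL^2_{per}(D))$. Finally, the stochastic integral is treated with the It\^o isometry, $\mE\bigl[\|\int_s^t\vB(\vu)\,dW\|^2\bigr]=\mE\bigl[\int_s^t\|\vB(\vu(\tau))\|^2\,d\tau\bigr]$, combined with the linear growth bound \eqref{eq2.6b} and $\sup_{0\le\tau\le T}\mE\bigl[\|\vu(\tau)\|^2\bigr]\le C$ (a consequence of \eqref{eq2.9} and the Poincar\'e inequality), giving $\mE\bigl[\int_s^t\|\vB(\vu)\|^2\,d\tau\bigr]\le C\int_s^t\bigl(1+\mE[\|\vu(\tau)\|^2]\bigr)\,d\tau\le C|t-s|$. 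Summing the four contributions proves \eqref{holder_pressure}.

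I expect the only delicate step to be the justification of the $\vL^2$-valued representation of $\nab P$, together with the identity $\|\Delta\vu\|=\|{\bf A}\vu\|$: both rely on the periodicity of the domain, which is precisely where the periodic boundary condition enters (consistent with the earlier remark). One should also verify that each term on the right-hand side of the representation genuinely lies in $\vL^2$ before taking norms, which is guaranteed by $\vu\in L^2(\Ome;0,T;\vH^2_{per}(D))$ together with the integrability of $\vf$ and the linear growth of $\vB$, and that the It\^o isometry is applied in its Hilbert-space-valued form to the increment $\int_s^t\vB(\vu)\,dW$. Everything else is a routine application of the cited continuity and stability estimates.
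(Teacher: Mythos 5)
Your proposal is correct and follows essentially the same route as the paper: write $\nabla P(t)$ as an $\vL^2$-valued It\^o process from the mixed formulation, difference at the two times, split into four terms, and control them respectively by the H\"older estimate \eqref{eq2.20a}, Cauchy--Schwarz in time together with $\vu\in L^2(\Ome;0,T;\vH^2_{per}(D))$ and the integrability of $\vf$, and the It\^o isometry with the linear growth of $\vB$. The extra care you take in justifying the representation of $\nabla P$ and the identity $\|\Delta\vu\|=\|{\bf A}\vu\|$ on the periodic domain is consistent with (and slightly more explicit than) the paper's argument.
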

	
	\begin{proof}
		From its definition of $P(t)$, we get 
		\begin{align}
			\nab P(t) = -\vu(t) + \int_0^t \Delta\vu(s)\,ds + \int_0^t \vf(s)\, ds + \int_0^t \vB(\vu(s))\, d\vW(s)
		\end{align}
		Therefore, for any $s,t \in [0,T]$, we have 
		\begin{align}
			\nab\big(P(t) - P(s)\big) &= - \big(\vu(t) - \vu(s)\big) + \int_s^t \Delta \vu(\tau)\, d\tau + \int_{s}^t \vf(\tau)\, d\tau \\\nonumber
			& \qquad + \int_s^t\vB(\vu(\tau))\,dW(\tau).
		\end{align}
		Thus, 
		\begin{align}
			\mE\big[\|\nab\big(P(t) - P(s)\big)\|^2\big] &\leq 4\mE\big[\|\vu(t) - \vu(s)\|^2\big] + 4\mE\bigg[\bigg\|\int_s^t \Delta \vu(\tau)\,d\tau\bigg\|^2\bigg]\\\nonumber
			&\quad  + 4\mE\bigg[\bigg\|\int_{s}^t \vf(\tau)\, d\tau\bigg\|^2  +  \bigg\|\int_s^t \vB(\vu(\tau))\,dW(\tau)\bigg\|^2\bigg]\\\nonumber
			& = \mbox{\tt I} + \mbox{\tt II} + \mbox{\tt III} + \mbox{\tt IV}.
		\end{align}
		The first term \mbox{\tt I} can be controlled by using \eqref{eq2.10a}. For \mbox{\tt II} and \mbox{\tt III}, by  Schwarz inequality, we have
		\begin{align}
			\mbox{\tt II} + \mbox{\tt III} \leq 4\mE\bigg[\int_s^t\|\Delta \vu(\tau)\|^2\, d\tau\bigg]|t-s| + 4\mE\bigg[\int_s^t\|\vf(\tau)\|^2\, d\tau\bigg]|t-s|.
		\end{align}
		In addition, using It\^o isometry and \eqref{eq2.6b}, we obtain
		\begin{align}
			\mbox{\tt IV} = 4\mE\bigg[\int_s^t\|\vB(\vu(\tau))\|^2\, d\tau\bigg] 
			\leq C\mE\bigg[\int_s^t\|\vu(\tau)\|^2\, d\tau\bigg].
		\end{align}
		In summary, we have 
		\begin{align}
			\mE\big[\|\nab\big(P(t) - P(s)\big)\|^2\big] &\leq C|t-s| + 4\mE\bigg[\int_s^t\|\Delta \vu(\tau)\|^2\, d\tau\bigg]|t-s|  \\\nonumber
			&\quad  + 4\mE\bigg[\int_s^t\|\vf(\tau)\|^2\, d\tau\,|t-s| +  C \int_s^t\|\vu(\tau)\|^2\, d\tau\bigg]\\\nonumber
			&\leq \bigg(C + 4\mE\bigg[\int_s^t\|\Delta \vu(\tau)\|^2\, d\tau   +\int_s^t\|\vf(\tau)\|^2\, d\tau\bigg] \\
			&\quad + C\max_{\tau \in [0,T]}\mE\big[\|\vu(\tau)\|^2\big]\bigg)|t-s|.\nonumber
		\end{align}
		Finally, the desired estimate follows from the assumptions on $\vu$ and $\vf$. 
	\end{proof}
	
	%%%%%%
	\section{Two Chorin-type time-stepping schemes}\label{sec3} 
	In this section, we first formulate two Chorin-type semi-discrete-in-time 
	schemes for problem \eqref{eq1.1}. The first scheme is the standard Chorin scheme 
	and the second one is a Helmholtz decomposition enhanced nonstandard Chorin scheme. 
	We then present a complete convergence analysis for each scheme which include 
	establishing their stability and error estimates in strong norms for both velocity and pressure approximations.

	\subsection{Standard Chorin projection scheme}\label{sub3.1} 
	We first consider the standard Chorin scheme for \eqref{eq1.1},
	its formulation is a straightforward adaptation of the well-known scheme for the 
	deterministic Stokes problem and is given in Algorithm 1 below. 
	As mentioned earlier, the standard Chorin scheme for \eqref{eq1.1} was already 
	studied in \cite{CHP12} in the special case when the noise is divergence-free  
	and error estimates were only obtained for the velocity approximation. 
	In contrast, here we consider the Chorin scheme for 
	general multiplicative noise and to derive error estimates 
	in strong norms not only for the velocity but also for pressure approximations
	and to achieve a full understanding about the scheme.

	\subsubsection{\bf Formulation of the standard Chorin scheme}
	Let $M$ be a (large) positive integer and $k:=T/M$ be the time step size. 
	Set $t_j=jk$ for $j=0,1,2,\cdots, M$, then $\{t_j\}_{j=0}^M$ forms a
	uniform mesh for $(0,T)$. The standard Chorin projection scheme is 
	given   as follows (cf. \cite{CHP12,Temam01,Girault_Raviart86}):
	
	\medskip
	\noindent
	{\bf Algorithm 1}
	\smallskip
	
	Let $\tilde{\vu}^0=\vu^0=\vu_0$. For $n=0,1,2,\cdots, M-1$, do the following three steps.
	
	{\em Step 1:} Given $\vu^n \in L^2(\Ome;\bH)$ and $\tilde{\vu}^n \in L^2(\Ome;\vH^1_{per}(D))$, find $\tilde{\vu}^{n+1} \in L^2(\Ome;\vH_{per}^1(D))$ such that $\mP$-a.s.
	\begin{align}
		\label{eq3.1}		\tilde{\vu}^{n+1} - k \Delta\tilde{\vu}^{n+1} & = \vu^n  + k\vf^{n+1} + \vB(\tilde{\vu}^n)\Delta W_{n+1}\qquad\mbox{in } D.
	\end{align}

	\smallskip
	{\em Step 2:} Find $p^{n+1} \in L^2(\Ome;H^1_{per}(D)/\mathbb{R})$ such that $\mP$-a.s.
	%\begin{subequations}
	\begin{align}
		\label{eq3.2a}		-\Delta p^{n+1} &= -\frac{1}{k}\div \tilde{\vu}^{n+1}\qquad \mbox{in } D.
		%	\p_{\bf n} p^{n+1} &= 0\qquad \mbox{on } \p D.
	\end{align}
	
	\smallskip
	{\em Step 3:} Define $\vu^{n+1} \in L^2(\Ome; \bH)$ by
	\begin{align}
		\label{eq3.3a}	\vu^{n+1} &= \tilde{\vu}^{n+1} - k \nab p^{n+1}.
	\end{align}
	
	\begin{remark}
		(a)	The above formulation is written in the way in which the scheme is implemented, it is slightly different from the traditional writing which combines Step 2 and 3 together.
		
		(b) It is easy to check $(\tilde{\vu}^{n+1}, {\vu}^{n+1}, p^{n+1})$ 
		satisfies the following system:
		\begin{subequations}\label{eq3.4}
			\begin{alignat}{2}
				\label{eq3.4a}	\tilde{\vu}^{n+1} - \tilde{\vu}^n - k\Delta \tilde{\vu}^{n+1} + k\nab p^{n} &= k\vf^{n+1} + \vB(\tilde{\vu}^n)\Delta W_{n+1} &&\qquad \mbox{in }D,\\
				\label{eq3.4b}		\div \tilde{\vu}^{n+1} - k\Delta p^{n+1} &= 0 &&\qquad\mbox{in }D,
			\end{alignat}
		\end{subequations}
		where $\tilde{u}^0 = u_0$.
	\end{remark}

	\subsubsection{\bf Stability estimates for the standard Chorin method} 
	{The goal of this subsection is to establish some stability estimates for 
		Algorithm 1 in strong norms.} %These estimates will play an important role in the error estimations for the the fully discrete finite element Chorin scheme to be given in the next section.
	
	\begin{lemma}\label{lemma3.1} 
		The discrete processes $\{(\tilde{\vu}^n,p^n)\}_{n=0}^M$ 
		defined in \eqref{eq3.4} satisfy
		\begin{subequations}
			\begin{align}\label{eq3.5}
				\max_{0\leq n \leq M}\mE [\|\tilde{\vu}^n\|^2] + \mE \bigg[\sum_{n=1}^{M} \|\tilde{\vu}^n - \tilde{\vu}^{n-1}\|^2\bigg] + \mE \bigg[k\sum_{n=0}^{M} \|\nab \tilde{\vu}^n\|^2\bigg] 
				&\leq C, \\
			\label{eq3.5b}	\mE \bigg[k\sum_{n=0}^{M}\|\nab p^n\|^2\bigg] &\leq \frac{C}k,\\
			\label{eq3.56b} 	\max_{0\leq n \leq M}\mE [\|\nab\tilde{\vu}^n\|^2]  + \mE \bigg[k\sum_{n=0}^{M} \|\Delta \tilde{\vu}^n\|^2\bigg]
			&\leq \frac{C}{k},
			\end{align}
		\end{subequations}
		where $C>0$ depends only on $D_T, u_0, \vf$.	
	\end{lemma}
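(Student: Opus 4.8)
The plan is to exploit two discrete orthogonality (Pythagorean) identities coming from the projection structure of Steps~2--3 together with spatial periodicity, and then run two energy arguments: an $L^2$-based one for \eqref{eq3.5}--\eqref{eq3.5b} and an $H^1$-based one for \eqref{eq3.56b}. First I would record the two identities. Using \eqref{eq3.3a}, \eqref{eq3.4b} and periodic integration by parts, $(\tilde{\vu}^{n+1},\nab p^{n+1}) = -(\div\tilde{\vu}^{n+1}, p^{n+1}) = k\|\nab p^{n+1}\|^2$, so expanding $\|\vu^{n+1}\|^2=\|\tilde{\vu}^{n+1}-k\nab p^{n+1}\|^2$ gives
\[
\|\vu^{n+1}\|^2 + k^2\|\nab p^{n+1}\|^2 = \|\tilde{\vu}^{n+1}\|^2 .
\]
Running the same computation on gradients, with $\nab\vu^{n+1}=\nab\tilde{\vu}^{n+1}-k\nabla^2 p^{n+1}$, $(\nab\tilde{\vu}^{n+1},\nabla^2 p^{n+1})=k\|\Delta p^{n+1}\|^2$, and the periodic identity $\|\nabla^2 q\|=\|\Delta q\|$, yields
\[
\|\nab\vu^{n+1}\|^2 + k^2\|\Delta p^{n+1}\|^2 = \|\nab\tilde{\vu}^{n+1}\|^2 .
\]
In particular $\|\vu^n\|\le\|\tilde{\vu}^n\|$ and $\|\nab\vu^n\|\le\|\nab\tilde{\vu}^n\|$, which is exactly what lets the velocity quantities telescope.

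For \eqref{eq3.5}, I would test \eqref{eq3.1} with $\tilde{\vu}^{n+1}$ and use $2(\vu^n,\tilde{\vu}^{n+1})=\|\vu^n\|^2+\|\tilde{\vu}^{n+1}\|^2-\|\tilde{\vu}^{n+1}-\vu^n\|^2$ to obtain a per-step identity; replacing $\|\tilde{\vu}^{n+1}\|^2$ by $\|\vu^{n+1}\|^2+k^2\|\nab p^{n+1}\|^2$ via the first identity makes $\|\vu^n\|^2$ telescope. The only delicate point is the noise term $(\vB(\tilde{\vu}^n)\Delta W_{n+1},\tilde{\vu}^{n+1})$: since $\tilde{\vu}^{n+1}$ is not $\cF_{t_n}$-measurable, I would split $\tilde{\vu}^{n+1}=(\tilde{\vu}^{n+1}-\vu^n)+\vu^n$, observe $\mE[(\vB(\tilde{\vu}^n)\Delta W_{n+1},\vu^n)]=0$ by $\cF_{t_n}$-measurability and the mean-zero, independent increment, and control the remaining cross term by Young's inequality, absorbing $\tfrac12\|\tilde{\vu}^{n+1}-\vu^n\|^2$ on the left and estimating $\mE\|\vB(\tilde{\vu}^n)\Delta W_{n+1}\|^2\le Ck(1+\mE\|\tilde{\vu}^n\|^2)$ via the It\^o isometry and \eqref{eq2.6b}. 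Taking expectation, summing over $n$, absorbing the force term and a lower-order multiple of the pressure term, and applying the discrete Gronwall inequality then give $\max_n\mE\|\vu^n\|^2\le C$, all summed quantities in \eqref{eq3.5}, and $k^2\sum_n\mE\|\nab p^{n+1}\|^2\le C$; the bound $\max_n\mE\|\tilde{\vu}^n\|^2\le C$ follows from the per-step identity and a short recursion, and dividing $k^2\sum_n\mE\|\nab p^n\|^2\le C$ by $k$ is precisely \eqref{eq3.5b}.

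Finally, for \eqref{eq3.56b} I would test \eqref{eq3.1} with $-\Delta\tilde{\vu}^{n+1}$; periodic integration by parts turns the left side into $\|\nab\tilde{\vu}^{n+1}\|^2+k\|\Delta\tilde{\vu}^{n+1}\|^2$ and the data term into $(\nab\vu^n,\nab\tilde{\vu}^{n+1})\le\tfrac12\|\nab\vu^n\|^2+\tfrac12\|\nab\tilde{\vu}^{n+1}\|^2\le\tfrac12\|\nab\tilde{\vu}^n\|^2+\tfrac12\|\nab\tilde{\vu}^{n+1}\|^2$, using the gradient identity so that $\tfrac12\|\nab\tilde{\vu}^n\|^2$ telescopes and the force term is absorbed by Young. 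The crucial term is $(\vB(\tilde{\vu}^n)\Delta W_{n+1},\Delta\tilde{\vu}^{n+1})$: pairing the noise against the second-order quantity $\Delta\tilde{\vu}^{n+1}$ admits no martingale cancellation, so I apply Young with a weight, absorbing $\tfrac{k}{4}\|\Delta\tilde{\vu}^{n+1}\|^2$ on the left and leaving $\tfrac{C}{k}\mE\|\vB(\tilde{\vu}^n)\Delta W_{n+1}\|^2\le C(1+\mE\|\tilde{\vu}^n\|^2)\le C$ per step by \eqref{eq2.6b} and \eqref{eq3.5}. Summing the $M=T/k$ such terms produces the factor $C/k$, which is exactly the source of the $k^{-1}$ growth on the right of \eqref{eq3.56b}; telescoping the gradient term and maximizing over the upper summation limit then gives \eqref{eq3.56b}. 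The main obstacle throughout is the treatment of these stochastic terms — both the measurability splitting needed to preserve the sharp $O(1)$ bound in the $L^2$ estimate and the unavoidable $k^{-1}$ weight forced in the $H^1$ estimate — together with verifying the two periodic projection identities that make the velocity quantities telescope.
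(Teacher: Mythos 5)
Your argument is correct, but it takes a genuinely different route from the paper's. The paper works throughout with the combined pressure-stabilized form \eqref{eq3.4}: for \eqref{eq3.5}--\eqref{eq3.5b} it tests \eqref{eq3.4a} with $\tilde{\vu}^{n+1}$ and \eqref{eq3.4b} with $p^n$, and extracts the term $k^2\|\nab p^{n+1}\|^2$ via the identity $k\bigl(\nab p^{n+1},\nab(p^{n+1}-p^n)\bigr)=\bigl(\tilde{\vu}^{n+1}-\tilde{\vu}^n,\nab p^{n+1}\bigr)$, with the noise handled by the same martingale splitting you use (against $\tilde{\vu}^n$ rather than $\vu^n$). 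You instead work with the split form \eqref{eq3.1}--\eqref{eq3.3a} and the two Pythagorean identities $\|\tilde{\vu}^{n+1}\|^2=\|\vu^{n+1}\|^2+k^2\|\nab p^{n+1}\|^2$ and $\|\nab\tilde{\vu}^{n+1}\|^2=\|\nab\vu^{n+1}\|^2+k^2\|\Delta p^{n+1}\|^2$, which is the classical deterministic Chorin energy argument; this makes the telescoping structure more transparent and delivers $k^2\sum_n\mE[\|\nab p^n\|^2]\leq C$ for free. The difference is most pronounced in \eqref{eq3.56b}: the paper keeps the lagged pressure term and bounds $k(\nab p^n,\Delta\tilde{\vu}^{n+1})$ by $k\|\nab p^n\|^2$ plus an absorbable piece, so the $k^{-1}$ comes from \eqref{eq3.5b}, and it integrates the noise term by parts to get $(\nab\vB(\tilde{\vu}^n)\Delta W_{n+1},\nab\tilde{\vu}^{n+1})$ --- a step that implicitly uses a gradient bound on $\vB$ beyond the stated hypotheses \eqref{eq2.6}. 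In your version the pressure disappears into the orthogonality identity and the $k^{-1}$ is instead forced by the weighted Young bound on $(\vB(\tilde{\vu}^n)\Delta W_{n+1},\Delta\tilde{\vu}^{n+1})$, which only needs the $L^2$ linear growth \eqref{eq2.6b}; so your route is arguably cleaner under the paper's stated assumptions. One small bookkeeping point: your energy identity controls $\sum_n\|\tilde{\vu}^{n+1}-\vu^n\|^2$, whereas \eqref{eq3.5} asks for $\sum_n\|\tilde{\vu}^n-\tilde{\vu}^{n-1}\|^2$; you should note that $\tilde{\vu}^{n+1}-\tilde{\vu}^n=(\tilde{\vu}^{n+1}-\vu^n)-k\nab p^n$, so the desired sum is controlled by what you proved together with $k^2\sum_n\mE[\|\nab p^n\|^2]\leq C$.
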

	
	\begin{proof} 
	We first prove \eqref{eq3.5} and \eqref{eq3.5b}.	Testing \eqref{eq3.4a} by $\tilde{u}^{n+1}$ and \eqref{eq3.4b} by $p^n$, and integrating  by parts, we obtain
		\begin{align}\label{eq3.6}		&\big(\tilde{\vu}^{n+1}-\tilde{\vu}^n,\tilde{\vu}^{n+1}\big) 
			+ k\|\nab\tilde{\vu}^{n+1}\|^2 + k\big(\nab p^{n},\tilde{\vu}^{n+1}\big) \\
			&\hskip 1.2in 
			= k\big(\vf^{n+1}, \tilde{\vu}^{n+1}\big) 
			+ \big(\vB(\tilde{\vu}^n)\Delta W_{n+1},\tilde{\vu}^{n+1}\big),  \nonumber \\
			&\big(\tilde{\vu}^{n+1},\nab p^n\big) - k\big(\nab p^{n+1},\nab p^n\big) =0.\label{eq3.7}
		\end{align}
		
		Substituting \eqref{eq3.7} into \eqref{eq3.6} yields 
		\begin{align}\label{eq3.8}	
			\big(\tilde{\vu}^{n+1}-\tilde{\vu}^n,\tilde{\vu}^{n+1}\big) 
			&+ k\|\nab\tilde{\vu}^{n+1}\|^2 + k^2\big(\nab p^{n+1},\nab p^n\big) \\
			& = k\big(\vf^{n+1},\tilde{\vu}^{n+1} \big) +  \big(\vB(\tilde{\vu}^n)\Delta W_{n+1},\tilde{\vu}^{n+1}\big). \nonumber
		\end{align}
		
		Using the identity $2a(a-b) = a^2-b^2 + (a-b)^2$ and the following identity from \eqref{eq3.4b}
		$$k\big(\nab p^{n+1},\nab(p^{n+1}-p^n)\big) = \big(\tilde{\vu}^{n+1}-\tilde{\vu}^n,\nab p^{n+1}\big),$$ and taking expectations on both sides of  \eqref{eq3.8}, we get
		\begin{align}\label{eq3.10}	&\frac{1}{2}\mE\bigl[\|\tilde{\vu}^{n+1}\|^2-\|\tilde{\vu}^{n}\|^2  +\|\tilde{\vu}^{n+1}-\tilde{\vu}^n\|^2 \bigr] + \mE \bigl[ k\|\nab \tilde{\vu}^{n+1}\|^2 
			+ k^2 \|\nab p^{n+1}\|^2 \bigr] \\
			&\quad = k\mE[\big(\tilde{\vu}^{n+1}-\tilde{\vu}^n,\nab p^{n+1}\big)] + k\mE\big[\big(\vf^{n+1} ,\tilde{\vu}^n\big)\big] \nonumber \\
			&\qquad +\mE\bigl[ \big(\vB(\tilde{\vu}^n)\Delta W_{n+1},\tilde{\vu}^{n+1}\big) \bigr]. \nonumber
		\end{align}
		
		Next, we bound each term on the right-hand side of \eqref{eq3.10}. First, by Schwarz and Young's inequalities,   we have
		\begin{align}\label{eq3.11}	
			k\mE\big[\big(\tilde{\vu}^{n+1}-\tilde{\vu}^n,\nab p^{n+1}\big) \big] &\leq \frac{3}{10}\mE\bigl[\|\tilde{\vu}^{n+1}-\tilde{\vu}^n\|^2 \bigr] 
			+ \frac56 k^2\mE \bigl[\|\nab p^{n+1}\|^2 \bigr], \\
			k\mE\big[\big(\vf^{n+1} ,\tilde{\vu}^n\big)\big] &\leq Ck\mE\big[\|\vf^{n+1}\|^2\big] + \frac{k}{4}\mE\big[\|\nab \tilde{\vu}^{n+1}\|^2\big]. \label{eq3.12b}
		\end{align}
		In addition, by It\^o isometry, we have
		\begin{align}\label{eq3.12}	
			\mE\bigl[\big(\vB(\tilde{\vu}^n)\Delta W_{n+1},\tilde{\vu}^{n+1}\big)\bigr] &= \mE\bigl[\big(\vB(\tilde{\vu}^n)\Delta W_{n+1},\tilde{\vu}^{n+1}-\tilde{\vu}^n\big)\bigr]\\
			\nonumber
			&\leq 2\mE\bigl[\|\vB(\tilde{\vu}^n)\Delta W_{n+1}\|^2\bigr] 
			+ \frac{1}{8}\mE \bigl[\|\tilde{\vu}^{n+1}-\tilde{\vu}^n\|^2 \bigr]\\
			\nonumber
			&= 2k\mE\bigl[\|\vB(\tilde{\vu}^n)\|^2\bigr] + \frac{1}{8}\mE \bigl[\|\tilde{\vu}^{n+1}-\tilde{\vu}^n\|^2\bigr]\\
			\nonumber
			&\leq C k\mE\bigl[\|\tilde{\vu}^n\|^2 \bigr]+ \frac{1}{8}\mE \bigl[\|\tilde{\vu}^{n+1}-\tilde{\vu}^n\|^2\bigr].
		\end{align}
		
		Substituting \eqref{eq3.11}--\eqref{eq3.12} into \eqref{eq3.10} gives 
		\begin{align}\label{eq3.13}	&\frac{1}{2}\mE\bigl[\|\tilde{\vu}^{n+1}\|^2-\|\tilde{\vu}^{n}\|^2\bigr] + %\Bigl(\frac{3}{8}-\frac{1}{4\delta_1}\Bigr)
			\frac{1}{20}\mE \bigl[\|\tilde{\vu}^{n+1}-\tilde{\vu}^n\|^2 \bigr] + \frac{3k}{4}\mE \bigl[\|\nab \tilde{\vu}^{n+1}\|^2 \bigr]\\ \nonumber
			&\qquad\qquad\qquad + %\big(1-\delta_1\big)
			\frac16 k^2\mE\bigl[ \|\nab p^{n+1}\|^2\bigr] 
			\leq Ck\mE\big[\|\vf^{n+1}\|^2\big] + Ck\mE \bigl[\|\tilde{\vu}^n\|^2\bigr].
		\end{align}
		Applying the summation operator $\sum_{n=0}^{m}$ to \eqref{eq3.13} for any $0\leq m \leq M-1$ yields 
		\begin{align}
			&\frac{1}{2}\mE\big[\|\tilde{\vu}^{m+1}\|^2-\|\tilde{\vu}^{0}\|^2\big]+ \frac{1}{20} \sum_{n=0}^{m}\mE\bigl[ \|\tilde{\vu}^{n+1}-\tilde{\vu}^n\|^2 \bigr] 
			+ k\sum_{n=0}^{m}\mE \bigl[\|\nab \tilde{\vu}^{n+1}\|^2 \bigr] \\
			&\qquad + \frac16 k^2\sum_{n=0}^{m}\mE \bigl[\|\nab p^{n+1}\|^2 \bigr]  
			\leq Ck\sum_{n=0}^{m}\mE \bigl[\|\tilde{u}^n\|_{L^2}^2 \bigr] + Ck\sum_{n=0}^{m}\mE\big[\|\vf^{n+1}\|^2\big]. \nonumber
		\end{align}
		
		Finally, by the discrete Gronwall's inequality we obtain 
		\begin{align*}
			\frac{1}{2}\max_{1\leq n \leq M}\mE\bigl[\|\tilde{\vu}^{n}\|^2\bigr] &+ \frac1{20}\sum_{n=1}^{M}\mE \bigl[\|\tilde{\vu}^{n}-\tilde{\vu}^{n-1}\|^2\bigr]
			+ \frac{3k}{4}\sum_{n=1}^{M}\mE \bigl[\|\nab \tilde{\vu}^{n}\|^2\bigr]\\
			& +  \frac16 k^2\sum_{n=1}^{M}\mE \bigl[\|\nab p^{n}\|^2\bigr] 
			\leq C\bigg(\mE \bigl[\|\vu_0\|_{L^2}^2\bigr] + k\sum_{n=0}^{m}\mE\big[\|\vf^{n+1}\|^2\big]\bigg),
		\end{align*}
		which implies the desired estimate \eqref{eq3.5} and \eqref{eq3.5b}.

It remains to show \eqref{eq3.56b}. To the end, testing \eqref{eq3.4a} by $-\Delta \tilde{\vu}^{n+1}$ we get 
	\begin{align}
		&\frac12\bigl[\|\nab\tilde{\vu}^{n+1}\|^2 - \|\nab\tilde{\vu}^n\|^2 + \|\nab(\tilde{\vu}^{n+1} - \tilde{\vu}^n)\|^2\bigr] + k\|\Delta\tilde{\vu}^{n+1}\|^2\\\nonumber
		&\qquad = k\bigl(\nab p^n,\Delta \tilde{\vu}^{n+1}\bigr) -k\bigl(\vf^{n+1},\Delta \tilde{\vu}^{n+1}\bigr) + \bigl(\nab\vB(\tilde{\vu}^{n})\Delta W_{n+1}, \nab\tilde{\vu}^{n+1}\bigr).
	\end{align}
Proceeding with similar arguments as used above yields  
	\begin{align}\label{eq3.16}
		&\frac12\mE\bigl[\|\nab\tilde{\vu}^{m+1}\|^2\bigr] + \frac12 k\sum_{n=0}^{m}\mE\bigl[\|\Delta\tilde{\vu}^{n+1}\|^2\bigr] \\
		&\hskip 0.8in  \leq \frac12 \mE[\|\nab\vu_0\|^2] + k\sum_{n=0}^m\mE\Bigl[\|\nab p^n\|^2   
		+ \|\vf^{n+1}\|^2  + C \|\nab\tilde{\vu}^n\|^2 \Bigr]. \nonumber
	\end{align}
Then  by the discrete Gronwall inequality we get 
	\begin{align*}
	&\frac12\mE\bigl[\|\nab\tilde{\vu}^{m+1}\|^2\bigr] + \frac12 k\sum_{n=0}^{m}\mE\bigl[\|\Delta\tilde{\vu}^{n+1}\|^2\bigr]\\\nonumber
	&\qquad \leq \Bigl(\frac12 \mE[\|\nab\vu_0\|^2] + k\sum_{n=0}^m\mE\bigl[\|\nab p^n\|^2\bigr]+ k\sum_{n=0}^m\mE\bigl[\|\vf^{n+1}\|^2\bigr]\Bigr)\exp(Ct_m),
	\end{align*}
which and \eqref{eq3.5b} immediately infer \eqref{eq3.56b}. The proof is complete.
\end{proof}
	
%%%%%%%%%%%%%%%
\subsubsection{\bf Error estimates for the standard Chorin scheme}  
	In this subsection we shall derive some error estimates for the time-discrete processes 
	generated by Algorithm 1. To the best of our knowledge, these are the first error estimate results for the standard Chorin scheme in the case general multiplicative noises. 
	For the sake of brevity, but without loss of the generality, we set $\vf = 0$ in this subsection. 
	
	First, we state the following error estimate result for the velocity. 
	
	\begin{theorem}\label{thm3.3} 
		Let  $\{(\tilde{\vu}^n,p^n)\}_{n=0}^M$ be generated by 
		Algorithm 1, then there exists a positive constant $C$ which depends 
		on $D_T, \vu_0,$ and $\vf$ such that
		\begin{align}
			\biggl(\mE\biggl[k\sum_{n=0}^M \|\vu(t_n) &- \tilde{\vu}^n\|^2\biggr]\biggr)^{\frac12} \\\nonumber
			&+ \max_{0\leq \ell \leq M}\biggl(\mE\biggl[\Bigl\|k\sum_{n=0}^{\ell}\nab(\vu(t_n) - \tilde{\vu}^n)\Bigr\|^2\biggr]\biggr)^{\frac12} \leq C k^{\frac14}.
		\end{align}
	\end{theorem}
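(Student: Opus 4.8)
\smallskip
The plan is to reduce everything to a single energy inequality for the \emph{time-integrated} error, tested so as to reproduce exactly the two norms in the statement. Writing $\ve^n:=\vu(t_n)-\tilde{\vu}^n$, integrating \eqref{eq1.1a} over $[t_n,t_{n+1}]$ (with $\vf=0$ here) and subtracting \eqref{eq3.4a} gives, $\mP$-a.s.,
\begin{align*}
\ve^{n+1}-\ve^n-k\Delta\ve^{n+1}+\nab\bigl(P(t_{n+1})-P(t_n)\bigr)-k\nab p^n
=\mathbf{T}^{n+1}+\int_{t_n}^{t_{n+1}}\bigl(\vB(\vu(s))-\vB(\tilde{\vu}^n)\bigr)\,d\vW(s),
\end{align*}
where $\mathbf{T}^{n+1}:=\int_{t_n}^{t_{n+1}}\Delta\vu(s)\,ds-k\Delta\vu(t_{n+1})$ is the quadrature error of the diffusion term. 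Summing from $n=0$ to $m-1$ and using $\ve^0=0$, $P(0)=0$ yields the time-averaged identity
\begin{align*}
\ve^m-\Delta\cE^m+\nab\Bigl(P(t_m)-k\sum_{n=0}^{m-1}p^n\Bigr)
=\sum_{n=0}^{m-1}\mathbf{T}^{n+1}+\sum_{n=0}^{m-1}\int_{t_n}^{t_{n+1}}\bigl(\vB(\vu(s))-\vB(\tilde{\vu}^n)\bigr)\,d\vW(s),
\end{align*}
where $\cE^m:=k\sum_{n=1}^m\ve^n$, so that $\nab\cE^m=k\sum_{n=1}^m\nab\ve^n$ is exactly the quantity to be estimated and $k\sum_{n=0}^{m-1}p^n$ is the discrete time-averaged pressure.

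Next I would test this identity with $2(\cE^m-\cE^{m-1})=2k\ve^m$ and sum over $m=1,\dots,\ell$. Using $2a\cdot(a-b)=a^2-b^2+(a-b)^2$, the diffusion term telescopes to $\|\nab\cE^\ell\|^2+k^2\sum_m\|\nab\ve^m\|^2$, while the zeroth-order term gives $2k\sum_m\|\ve^m\|^2$; after taking expectations these furnish precisely the two left-hand quantities of the theorem, and it remains to control the pressure contribution and the data/noise contribution.

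The pressure term is the crux. Since $\ve^m$ is not divergence-free — indeed $\div\ve^m=-\div\tilde{\vu}^m=-k\Delta p^m$ by \eqref{eq3.4b} — it does not drop out; integrating by parts twice turns it into $-2k^2\bigl(\nab(P(t_m)-k\sum_{n<m}p^n),\nab p^m\bigr)$, which couples the velocity error to the time-averaged pressure error. I would treat this by summation by parts in time: writing $k\nab p^m$ as the increment of $k\sum_{n\le m}\nab p^n$ splits one factor into an increment $\nab(P(t_{m+1})-P(t_m))$ of the exact time-averaged pressure, bounded at order $k$ by Lemma \ref{thm2.4}, plus an increment of the pressure error that telescopes into a nonnegative boundary term, the remainder being absorbed via the pressure stability bound \eqref{eq3.5b}.

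The main obstacle, and the origin of the suboptimal $O(k^{\frac14})$ rate, is that the two rough-in-time ingredients — the accumulated consistency error $\sum_{n<m}\mathbf{T}^{n+1}$ and the curl part of the non-solenoidal noise carried through $k\nab p^n$ — can only be controlled in the weak time-averaged sense. Pairing $\sum_{n<m}\mathbf{T}^{n+1}$ with $\ve^m$ and integrating by parts lets me invoke the $H^1$ Hölder bound \eqref{eq2.20b}, while the accumulated stochastic term is handled by It\^o's isometry together with \eqref{eq2.6a} and the $L^2$ Hölder bound \eqref{eq2.20a}; but because these accumulated quantities do not fully cancel, they enter $\mE\|\nab\cE^\ell\|^2$ at order $k^{\frac12}$ rather than $k$. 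I would close with the discrete Gronwall inequality, using the stability estimates of Lemma \ref{lemma3.1} throughout, to obtain $\mE[k\sum_m\|\ve^m\|^2]+\max_\ell\mE\|\nab\cE^\ell\|^2\le Ck^{\frac12}$, which is the claim after taking square roots.
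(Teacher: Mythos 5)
Your proposal is correct and follows essentially the same route as the paper: form the time-summed error equation, test with the error (your pairing with $2k\ve^m$ and summing is algebraically the paper's choice of $\vv=\ve_{\tilde\vu}^{m+1}$ followed by $k\sum_m$), use \eqref{eq3.4b} to convert the pressure coupling into time-averaged pressure-error increments handled by summation by parts, Lemma \ref{thm2.4} and the stability bound \eqref{eq3.5b}, bound the consistency and noise terms via \eqref{eq2.20a}--\eqref{eq2.20b} and It\^o's isometry, and close with discrete Gronwall. The sources you identify for the $O(k^{\frac12})$ (hence $O(k^{\frac14})$ after square roots) loss match the paper's terms {\tt I} and {\tt IV}.
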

	
	\begin{proof}
		Let $\ve_{\tilde{\vu}}^m = \vu(t_m) - \tilde{\vu}^m$ and $\displaystyle \E_p^m = P(t_m) - k\sum_{n=0}^m p^n$. 
		Obviously, $\ve_{\tilde{\vu}}^m \in L^2\big(\Ome;\vH^1_{per}(D)\big)$ and $\E_p^m \in L^2(\Ome; H^1_{per}(D)/\mathbb{R})$. In addition, from \eqref{eq2.10a}, we have
		\begin{align}\label{eq3.19}
			&\big(\vu(t_{m+1}) , \vv\big) + \sum_{n=0}^m\int_{t_n}^{t_{n+1}}\big(\nab\vu(s),\nab \vv\big)\,ds + \big(\vv, \nab P(t_{m+1})\big) \\\nonumber
			&\quad = \bigl(\vu_0, \vv\bigr)+  \biggl(\sum_{n=0}^m\int_{t_n}^{t_{n+1}}\big(\vB(\vu(s))\, dW(s), \vv\biggr)\quad  \forall \vv \in \vH^1_{per}(D)\,\,  a.s.
		\end{align}
		Applying the summation operator $\sum_{n=0}^m$ to \eqref{eq3.4a} yields
		\begin{align}\label{eq3.20}
		\bigl(\tilde{\vu}^{m+1},\vv\bigr) &+ k\sum_{n=0}^{m}\bigl(\nab\tilde{\vu}^{n+1},\nab\vv\bigr) + \Bigl(k\sum_{n=0}^m\nab p^n,\vv\Bigr)\\\nonumber
		&=\bigl(\tilde{\vu}_0,\vv\bigr) + \biggl(\sum_{n=0}^m\int_{t_n}^{t_{n+1}}\vB(\tilde{\vu}^n)\, dW(s),\vv\biggr).
		\end{align}
		Subtracting \eqref{eq3.20} from \eqref{eq3.19} we get 
		\begin{align}\label{eq3.21}
			&\big(\ve_{\tilde{\vu}}^{m+1}, \vv\big) + k\Bigl(\sum_{n=0}^m\nab\ve_{\tilde{\vu}}^{n+1},\nab\vv\Bigr) + \big(\nab\E_p^m,\vv\big) \\\nonumber
			&\quad = \sum_{n=0}^m\int_{t_n}^{t_{n+1}}\big(\nab(\vu(t_{n+1}) - \vu(s)),\nab\vv\big)\, ds - \bigl(\nab(P(t_{m+1}) - P(t_m)), \vv\bigr)\\
			&\qquad + \biggl(\sum_{n=0}^m\int_{t_n}^{t_{n+1}}(\vB(\vu(s)) - \vB(\tilde{\vu}^n))\, dW(s),\vv\biggr)\quad \forall\vv\in \vH_{per}^1(D). \nonumber
		\end{align}
	Setting $\vv = \ve_{\tilde{\vu}}^{m+1}$ in \eqref{eq3.21} we obtain 
	\begin{align}\label{eq322}
		&\|\ve_{\tilde{\vu}}^{m+1}\|^2 + k\Bigl(\sum_{n=0}^{m+1}\nab\ve_{\tilde{\vu}}^{n},\nab\ve_{\tilde{\vu}}^{m+1}\Bigr) + \big(\nab\E_p^m,\ve_{\tilde{\vu}}^{m+1}\big) \\\nonumber
		& \quad = \sum_{n=0}^m\int_{t_n}^{t_{n+1}}\big(\nab(\vu(t_{n+1}) - \vu(s)),\nab\ve_{\tilde{\vu}}^{m+1}\big)\, ds\\\nonumber
		&\qquad - \bigl(\nab(P(t_{m+1}) - P(t_m)), \ve_{\tilde{\vu}}^{m+1}\bigr)\\
		&\qquad + \biggl(\sum_{n=0}^m\int_{t_n}^{t_{n+1}}(\vB(\vu(s)) - \vB(\tilde{\vu}^n))\, dW(s),\ve_{\tilde{\vu}}^{m+1}\biggr). \nonumber
	\end{align}
		Similarly, by \eqref{eq2.10b} and \eqref{eq3.4b} we get 
		\begin{align}\label{eq19}
			\div \ve_{\tilde{\vu}}^{n} + k\Delta p^{n} = 0.
		\end{align}
	Applying the summation $\sum_{n=0}^{m+1}$ to \eqref{eq19} and then adding $\pm \Delta P(t_{m+1})$ yield 
		\begin{align*}%\label{eq3.22}
		\div\biggl(\sum_{n=0}^{m+1} \ve_{\tilde{\vu}}^n\biggr) - \Delta \E_p^{m+1} = - \Delta P(t_{m+1}).
		\end{align*}
	Therefore,
	\begin{align}\label{eq3.23}
		\div\ve_{\tilde{\vu}}^{m+1} -\Delta(\E_p^{m+1} - \E_p^m) = -\Delta(P(t_{m+1}) - P(t_m)).
	\end{align}
Testing \eqref{eq3.23} by any $q \in L^2(\Ome; H^1_{per}(D)/\mathbb{R})$, we have
\begin{align}\label{eq323}
	\bigl(\ve_{\tilde{\vu}}^{m+1},\nab q\bigr) = \bigl(\nab(\E_p^{m+1} - \E_p^m),\nab q\bigr) - \bigl(\nab(P(t_{m+1}) - P(t_m)),\nab q\bigr).
\end{align}
Choosing $q = \E_p^m$ in \eqref{eq323} gives 
\begin{align}\label{eq3.24}
	\bigl(\ve_{\tilde{\vu}}^{m+1}, \nab\E_p^{m}\bigr) &= \bigl(\nab(\E_p^{m+1} - \E_p^m),\nab \E_p^m\bigr) - \bigl(\nab(P(t_{m+1}) - P(t_m)),\nab \E_p^m\bigr)\\\nonumber
	&= \bigl(\nab(\E_p^{m+1} - \E_p^m), \nab\E_p^{m+1}\bigr) - \|\nab(\E_p^{m+1} - \E_p^m)\|^2 \\\nonumber
	&\qquad\qquad- \bigl(\nab(P(t_{m+1}) - P(t_m)),\nab \E_p^m\bigr).
\end{align}
Substituting  \eqref{eq3.24} into \eqref{eq322} we obtain
	\begin{align*} %\label{eq3.25}
	&\|\ve_{\tilde{\vu}}^{m+1}\|^2 + k\biggl(\sum_{n=0}^{m+1}\nab\ve_{\tilde{\vu}}^{n},\nab\ve_{\tilde{\vu}}^{m+1}\biggr) + \bigl(\nab(\E_p^{m+1} - \E_p^m),\nab\E_p^{m+1}\bigr) \\\nonumber
	&\quad  = \sum_{n=0}^m\int_{t_n}^{t_{n+1}}\big(\nab(\vu(t_{n+1}) - \vu(s)),\nab\ve_{\tilde{\vu}}^{m+1}\big)\, ds\\\nonumber
	&\qquad + \|\nab(\E_p^{m+1} - \E_p^m)\|^2 +\bigl(\nab(P(t_{m+1}) - P(t_m)), \nab \E_p^m\bigr)\\\nonumber
	&\qquad - \bigl(\nab(P(t_{m+1}) - P(t_m)), \ve_{\tilde{\vu}}^{m+1}\bigr)\\\nonumber
	&\qquad + \biggl(\sum_{n=0}^m\int_{t_n}^{t_{n+1}}(\vB(\vu(s)) - \vB(\tilde{\vu}^n))\, dW(s),\ve_{\tilde{\vu}}^{m+1}\biggr)\\\nonumber
	&\quad \leq  \sum_{n=0}^m\int_{t_n}^{t_{n+1}}\big(\nab(\vu(t_{n+1}) - \vu(s)),\nab\ve_{\tilde{\vu}}^{m+1}\big)\, ds\\\nonumber
	&\qquad + \|\nab(\E_p^{m+1} - \E_p^m)\|^2 +\bigl(\nab(P(t_{m+1}) - P(t_m)), \nab \E_p^m\bigr)\\\nonumber
	&\qquad + \|\nab(P(t_{m+1}) - P(t_m))\|^2 + \frac14 \|\ve_{\tilde{\vu}}^{m+1}\|^2\\\nonumber
	&\qquad + \biggl\|\sum_{n=0}^m\int_{t_n}^{t_{n+1}}(\vB(\vu(s)) - \vB(\tilde{\vu}^n))\, dW(s)\biggr\|^2 + \frac14\|\ve_{\tilde{\vu}}^{m+1}\|^2.
\end{align*}	
Therefore,
\begin{align}\label{eq3.26}
		&\frac12\|\ve_{\tilde{\vu}}^{m+1}\|^2 + k\biggl(\sum_{n=0}^{m+1}\nab\ve_{\tilde{\vu}}^{n},\nab\ve_{\tilde{\vu}}^{m+1}\biggr) + \bigl(\nab(\E_p^{m+1} - \E_p^m),\nab\E_p^{m+1}\bigr) \\\nonumber
		&\quad \leq  \sum_{n=0}^m\int_{t_n}^{t_{n+1}}\big(\nab(\vu(t_{n+1}) - \vu(s)),\nab\ve_{\tilde{\vu}}^{m+1}\big)\, ds\\\nonumber
		&\qquad + \|\nab(P(t_{m+1}) - P(t_m))\|^2 +\|\nab(\E_p^{m+1} - \E_p^m)\|^2\\\nonumber
		&\qquad +\bigl(\nab(P(t_{m+1}) - P(t_m)), \nab \E_p^m\bigr)\\\nonumber
		&\qquad + \biggl\|\sum_{n=0}^m\int_{t_n}^{t_{n+1}}(\vB(\vu(s)) - \vB(\tilde{\vu}^n))\, dW(s)\biggr\|^2.
\end{align}
Using the identity $2a(a-b) = a^2 - b^2 + (a-b)^2$ in \eqref{eq3.26} yields
\begin{align}\label{eq3.27}
	\frac12 \|\ve_{\tilde{\vu}}^{m+1}\|^2 &+ \frac{k}{2}\biggl[\biggl\|\sum_{n=0}^{m+1}\nab\ve_{\tilde{\vu}}^n\biggr\|^2 - \biggl\|\sum_{n=0}^{m}\nab\ve_{\tilde{\vu}}^n\biggr\|^2 + \|\nab\ve_{\tilde{\vu}}^{m+1}\|^2\biggr]\\\nonumber
	&+ \frac12\bigl[\|\nab\E_p^{m+1}\|^2 -\|\nab\E_p^m\|^2 + \|\nab(\E_p^{m+1} - \E_p^m)\|^2\bigr]\\\nonumber
	&\leq \sum_{n=0}^m\int_{t_n}^{t_{n+1}}\big(\nab(\vu(t_{n+1}) - \vu(s)),\nab\ve_{\tilde{\vu}}^{m+1}\big)\, ds\\\nonumber
	&\qquad + \|\nab(P(t_{m+1}) - P(t_m))\|^2 +\|\nab(\E_p^{m+1} - \E_p^m)\|^2\\\nonumber
	&\qquad +\bigl(\nab(P(t_{m+1}) - P(t_m)), \nab \E_p^m\bigr)\\\nonumber
	&\qquad + \Bigl\|\sum_{n=0}^m\int_{t_n}^{t_{n+1}}(\vB(\vu(s)) - \vB(\tilde{\vu}^n))\, dW(s)\Bigr\|^2.
\end{align} 

Next, we apply the summation operator $k\sum_{m=0}^{\ell}$ for $0\leq \ell \leq M-1$, followed by applying the expectation operator $\mE[\cdot]$, to \eqref{eq3.27} to obtain
\begin{align}\label{eq3.28}
	&k\sum_{m=0}^{\ell}\mE\bigl[\|\ve_{\tilde{\vu}}^{m+1}\|^2\bigr] + \mE\biggl[\biggl\|k\sum_{m=0}^{\ell + 1}\nab\ve_{\tilde{\vu}}^m \biggr\|^2\biggr] + k^2\sum_{m=0}^{\ell}\mE\bigl[\|\nab\ve_{\tilde{\vu}}^{m+1}\|^2\bigr]\\\nonumber
	&\qquad + k\mE\bigl[\|\nab\E_p^{\ell + 1}\|^2\bigr] + k\sum_{m=0}^{\ell}\mE\bigl[\|\nab(\E_p^{m+1} - \E_p^m)\|^2\bigr]\\\nonumber
	&\quad \leq 2\mE\biggl[k\sum_{m=0}^{\ell}\sum_{n=0}^m\int_{t_n}^{t_{n+1}}\big(\nab(\vu(t_{n+1}) - \vu(s)),\nab\ve_{\tilde{\vu}}^{m+1}\big)\, ds\biggr]\\\nonumber
	&\qquad + 2\mE\biggl[k\sum_{m=0}^{\ell}\|\nab(P(t_{m+1}) - P(t_m))\|^2\biggr] \\\nonumber
	&\qquad + 2\mE\biggl[k\sum_{m=0}^{\ell}\|\nab(\E_p^{m+1} - \E_p^m)\|^2\biggr]\\\nonumber
	&\qquad +2\mE\biggl[k\sum_{m=0}^{\ell}\bigl(\nab(P(t_{m+1}) - P(t_m)), \nab \E_p^m\bigr)\biggr]\\\nonumber
	&\qquad + 2\mE\biggl[k\sum_{m=0}^{\ell}\Bigl\|\sum_{n=0}^m\int_{t_n}^{t_{n+1}}(\vB(\vu(s)) - \vB(\tilde{\vu}^n))\, dW(s)\Bigr\|^2\biggr]\\\nonumber
	&:= {\tt I + II + III + IV + V}.
\end{align}

Now we estimate each term on the right-hand side of \eqref{eq3.28} as follows.

By using the discrete and continuous H\"older inequality estimates \eqref{eq2.20b},  \eqref{eq2.9} and \eqref{eq3.5}, we obtain
\begin{align}\label{eq3.29}
	{\tt I} &= 2\mE\Bigl[k\sum_{m=0}^{\ell}\Bigl(\sum_{n=0}^m\int_{t_n}^{t_{n+1}}\nab(\vu(t_{n+1}) - \vu(s))\, ds,\nab\ve_{\tilde{\vu}}^{m+1}\Bigr)\Bigr]\\\nonumber
	&\leq  2\mE\biggl[k\sum_{m=0}^{\ell}\biggl\|\sum_{n=0}^m\int_{t_n}^{t_{n+1}}\nab(\vu(t_{n+1}) - \vu(s))\, ds\biggr\|\,\|\nab\ve_{\tilde{\vu}}^{m+1}\|\biggr]\\\nonumber
	&\leq 2\mE\biggl[\Bigl(k\sum_{m=0}^{\ell}\biggl\|\sum_{n=0}^{m}\int_{t_n}^{t_{n+1}}\nab(\vu(t_{n+1}) - \vu(s))\, ds\biggr\|^2\Bigr)^{\frac12}\,\Bigl(k\sum_{m=0}^{\ell}\|\nab\ve_{\tilde{\vu}}^n\|^2\Bigr)^{\frac12}\biggr]\\\nonumber
	&\leq C\mE\biggl[\Bigl(k\sum_{m=0}^{\ell}\sum_{n=0}^{m}\int_{t_n}^{t_{n+1}}\|\nab(\vu(t_{n+1}) - \vu(s))\|^2\, ds\Bigr)^{\frac12}\,\Bigl(k\sum_{m=0}^{\ell}\|\nab\ve_{\tilde{\vu}}^n\|^2\Bigr)^{\frac12}\biggr]\\\nonumber
	&\leq C\Bigl(\mE\Bigl[k\sum_{m=0}^{\ell}\sum_{n=0}^{m}\int_{t_n}^{t_{n+1}}\|\nab(\vu(t_{n+1})-\vu(s))\|^2\, ds\Bigr]\Bigr)^{\frac12}\Bigl(\mE\Bigl[k\sum_{m=0}^{\ell}\|\nab\ve_{\tilde{\vu}}^n\|^2\Bigr]\Bigr)\\\nonumber
	&\leq Ck^{\frac12}.
\end{align}

Next, by using \eqref{holder_pressure} we have
\begin{align}
	{\tt II } &=  2\mE\biggl[k\sum_{m=0}^{\ell}\|\nab(P(t_{m+1}) - P(t_m))\|^2\biggr]  \leq Ck.
\end{align}
By using \eqref{holder_pressure} and the stability estimate \eqref{eq3.5b} we obtain
\begin{align}\label{eq3.31}
	{\tt III} &= 2\mE\biggl[k\sum_{m=0}^{\ell}\|\nab(\E_p^{m+1} - \E_p^m)\|^2\biggr]\\\nonumber
	&= 2\mE\biggl[k\sum_{m=0}^{\ell}\|\nab(P(t_{m+1}) - P(t_m)) - k\nab p^{m+1}\|^2\biggr]\\\nonumber
	&\leq C\mE\biggl[k\sum_{m=0}^{\ell}\|\nab(P(t_{m+1}) - P(t_m))\|^2\biggr] + Ck\mE\biggl[k^2\sum_{m=0}^{\ell}\|\nab p^{m+1}\|^2\biggr] \\
	&\leq Ck.\nonumber
\end{align}
It follows from the It\^o isometry and \eqref{eq2.20a} that 
\begin{align}
	{\tt V} &= 2k\sum_{m=0}^{\ell}\mE\biggl[\biggl\|\sum_{n=0}^m\int_{t_n}^{t_{n+1}}\bigl(\vB(\vu(s)) - \vB(\tilde{\vu}^n)\bigr)\, dW(s)\biggr\|^2\biggr]\\\nonumber
	&= 2k\sum_{m=0}^{\ell}\mE\biggl[\sum_{n=0}^m\int_{t_n}^{t_{n+1}}\|\vB(\vu(s)) - \vB(\tilde{\vu}^n)\|^2\, ds\biggr]\\\nonumber
	&\leq Ck\sum_{m=0}^{\ell}\sum_{n=0}^m\int_{t_n}^{t_{n+1}}\mE[\|\vu(s) - \tilde{\vu}^n\|^2]\, ds \\\nonumber
	&\leq Ck\sum_{m=0}^{\ell}k\sum_{n=0}^{m}\mE[\|\ve_{\vu}^n\|^2] + Ck\sum_{m=0}^{\ell}\sum_{n=0}^m\int_{t_n}^{t_{n+1}}\mE[\|\vu(s) - \vu(t_n)\|^2]\, ds \\\nonumber
	&\leq  Ck\sum_{m=0}^{\ell}k\sum_{n=0}^{m}\mE[\|\ve_{\vu}^n\|^2] +Ck.
\end{align}

To bound term {\tt IV}, we first derive its rewriting as follows:
\begin{align}\label{eq3.32}
	{\tt IV} &= 2\mE\biggl[k\sum_{m=0}^{\ell}\bigl(\nab(P(t_{m+1}) - P(t_m)), \nab \E_p^m\bigr)\biggr]\\\nonumber
	&=2\mE\biggl[k\sum_{m=0}^{\ell}\bigl(\nab(P(t_{m+1}) - P(t_m)), \nab \E_p^{m+1}\bigr)\biggr] \\\nonumber
	&\quad + 2\mE\biggl[k\sum_{m=0}^{\ell}\bigl(\nab(P(t_{m+1}) - P(t_m)), \nab (\E_p^m - \E_p^{m+1})\bigr)\biggr].
\end{align}
By using the summation by parts, the first term above can be rewritten as 
\begin{align}\label{eq3.33}
	&2\mE\biggl[k\sum_{m=0}^{\ell}\bigl(\nab(P(t_{m+1}) - P(t_m)), \nab \E_p^{m+1}\bigr)\biggr] \\\nonumber
	&\quad = 2k\mE\bigl[\bigl(\nab P(t_{\ell+1}),\nab\E_p^{\ell +1}\bigr)\bigr] -2k\mE\biggl[\sum_{m=0}^{\ell}\bigl(\nab P(t_m),\nab(\E_p^{m+1} - \E_p^m)\bigr)\biggr].
\end{align}
Substituting \eqref{eq3.33} into \eqref{eq3.32} yields 
\begin{align} \label{eq3.35}
	{\tt IV} &= 2k\mE\bigl[\bigl(\nab P(t_{\ell+1}),\nab\E_p^{\ell +1}\bigr)\bigr] -2k\mE\biggl[\sum_{m=0}^{\ell}\bigl(\nab P(t_m),\nab(\E_p^{m+1} - \E_p^m)\bigr)\biggr] \\\nonumber
	&\quad+2\mE\biggl[k\sum_{m=0}^{\ell}\bigl(\nab(P(t_{m+1}) - P(t_m)), \nab (\E_p^m - \E_p^{m+1})\bigr)\biggr]\\\nonumber
	&:= {\tt IV_1 + IV_2 + IV_3}.
\end{align}

We now bound each term above. Using the stability \eqref{eq210} we get
\begin{align} \label{eq3.36}
	{\tt IV_1} &\leq Ck\mE\bigl[\|\nab P(t_{\ell + 1})\|^2\bigr] + \frac{k}{4}\mE\bigl[\|\nab\E_p^{\ell+1}\|^2\bigr]\\\nonumber
	&\leq Ck + \frac{k}{4}\mE\bigl[\|\nab\E_p^{\ell+1}\|^2\bigr].
\end{align}
Expectedly,  the term $\frac{k}{4}\mE\bigl[\|\nab\E_p^{\ell+1}\|^2\bigr]$ will be absorbed to the left side of \eqref{eq3.28} later.

To bound term ${\tt IV_2}$, we reuse the estimation from {\tt III} in \eqref{eq3.31} together with the stability of $P$ given in \eqref{eq210} to get 
\begin{align}
	{\tt IV_2} &\leq 2\mE\biggl[k\sum_{m=0}^{\ell}\|\nab P(t_m)\|\,\|\nab(\E_p^{m+1} - \E_p^m)\|\biggr] \\\nonumber
	&\leq C\mE\biggl[\Bigl(k\sum_{m=0}^{\ell}\|\nab P(t_m)\|^2\Bigr)^{\frac12}\, \Bigl(k\sum_{m=0}^{\ell}\|\nab(\E_p^{m+1} - \E_p^m)\|^2\Bigr)^{\frac12}\biggr]\\\nonumber
	&\leq C\Bigl(\mE\Bigl[k\sum_{m=0}^{\ell}\|\nab P(t_m)\|^2\Bigr]\Bigr)^{\frac12}\, \Bigl(\mE\Bigl[k\sum_{m=0}^{\ell}\|\nab(\E_p^{m+1} - \E_p^m)\|^2\Bigr]\Bigr)^{\frac12}\\\nonumber
	&\leq Ck^{\frac12}.
\end{align}

Using again \eqref{eq3.31} and \eqref{holder_pressure} we have
\begin{align}\label{eq3.38}
	{\tt IV_3} &\leq Ck\sum_{m=0}^{\ell}\|\nab(P(t_{m+1}) - P(t_m))\|^2 + C\mE\biggl[k\sum_{m=0}^{\ell}\|\nab(\E_p^{m+1} - \E_p^m)\|^2\biggr]\\\nonumber
	&\leq Ck.
\end{align}

Substituting estimates \eqref{eq3.36}--\eqref{eq3.38} into \eqref{eq3.35} yields
\begin{align}\label{eq3.39}
	{\tt IV} \leq Ck^{\frac12} + \frac{k}{4}\mE\bigl[\|\nab\E_p^{\ell+1}\|^2\bigr].
\end{align}
Now, substituting the estimates for {\tt I, II, III, IV, V} into \eqref{eq3.28} and using the notation $X^{\ell} = k\sum_{m=0}^{\ell}\mE[\|\ve_{\tilde{\vu}}^m\|^2]$ we obtain
\begin{align*} %\label{eq3.40}
	&X^{\ell +1} + \mE\Bigl[\Bigl\|k\sum_{m=0}^{\ell + 1}\nab\ve_{\tilde{\vu}}^m \Bigr\|^2\Bigr] + k^2\sum_{m=0}^{\ell}\mE\bigl[\|\nab\ve_{\tilde{\vu}}^{m+1}\|^2\bigr]\\\nonumber
	&\quad + \frac{3k}{4}\mE\bigl[\|\nab\E_p^{\ell + 1}\|^2\bigr] + k\sum_{m=0}^{\ell}\mE\bigl[\|\nab(\E_p^{m+1} - \E_p^m)\|^2\bigr]\\\nonumber
	&\leq Ck^{\frac12} + Ck\sum_{m=0}^{\ell}X^m. 
%	&\leq Ck^{\frac12}\exp(Ct_{\ell}),
\end{align*}
Thus, it follows from the discrete Gronwall inequality that 
\begin{align*} %\label{eq3.40}
&X^{\ell +1} + \mE\Bigl[\Bigl\|k\sum_{m=0}^{\ell + 1}\nab\ve_{\tilde{\vu}}^m \Bigr\|^2\Bigr] + k^2\sum_{m=0}^{\ell}\mE\bigl[\|\nab\ve_{\tilde{\vu}}^{m+1}\|^2\bigr]\\\nonumber
&\quad + \frac{3k}{4}\mE\bigl[\|\nab\E_p^{\ell + 1}\|^2\bigr] + k\sum_{m=0}^{\ell}\mE\bigl[\|\nab(\E_p^{m+1} - \E_p^m)\|^2\bigr]\\\nonumber
&\leq Ck^{\frac12}\exp(Ct_{\ell}),
\end{align*}
which yields the desired error estimate for the velocity approximation.
\end{proof}

Next, we derive an error estimate for the pressure approximation. Our main idea is to
estimate the pressure error in a time averaged fashion. 
	
	\begin{theorem}\label{thm3.4} 
		Let $\{(\tilde{\vu}^m,p^m)\}_{m=0}^M$ be generated by Algorithm 1. Then, there exists a positive constant $C$ which depends on $D_T,\vu_0,\vf,$ and $\beta$ such that 
		\begin{align}\label{eq3.41}
			\bigg(\mE\bigg[k\sum_{m=0}^M\Bigl\|P(t_m) - k\sum_{n=0}^m p^n\Bigr\|^2\bigg]\bigg)^{\frac12} \leq Ck^{\frac14},
		\end{align}
		where $\beta$ denotes the stochastic inf-sup constant (see below).
	\end{theorem}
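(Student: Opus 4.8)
The plan is to control the time-averaged pressure error $\E_p^m = P(t_m) - k\sum_{n=0}^m p^n$ directly in the $L^2$-norm by means of the stochastic inf-sup condition. Since $\E_p^m \in L^2(\Ome; H^1_{per}(D)/\mathbb{R})$ has vanishing spatial mean $\mP$-a.s., the inf-sup condition gives, pathwise,
\begin{align*}
\|\E_p^m\| \leq \beta \sup_{\vv\in\vH^1_{per}(D),\,\vv\neq 0}\frac{(\div\vv,\E_p^m)}{\|\nab\vv\|}.
\end{align*}
The numerator is precisely the quantity I can extract from the velocity identity \eqref{eq3.21}: integrating by parts (no boundary terms appear because of periodicity) gives $(\div\vv,\E_p^m) = -(\vv,\nab\E_p^m)$, and solving \eqref{eq3.21} for $(\nab\E_p^m,\vv)$ expresses this in terms of $(\ve_{\tilde{\vu}}^{m+1},\vv)$, $k(\sum_{n=0}^m\nab\ve_{\tilde{\vu}}^{n+1},\nab\vv)$, and the three source terms (the temporal regularity of $\vu$, the pressure increment $P(t_{m+1})-P(t_m)$, and the stochastic increment of $\vB$).

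Applying Cauchy--Schwarz to each term, dividing by $\|\nab\vv\|$ (and using the Poincar\'e inequality $\|\vv\|\le C\|\nab\vv\|$ on the zero-mean space $\vH^1_{per}(D)$ for the terms paired against $\vv$ rather than $\nab\vv$), and taking the supremum, I obtain the pathwise bound
\begin{align*}
\|\E_p^m\| \leq C\beta\Bigl(&\|\ve_{\tilde{\vu}}^{m+1}\| + \Bigl\|k\sum_{n=0}^m\nab\ve_{\tilde{\vu}}^{n+1}\Bigr\| + \Bigl\|\sum_{n=0}^m\int_{t_n}^{t_{n+1}}\nab(\vu(t_{n+1})-\vu(s))\,ds\Bigr\| \\
&+ \|\nab(P(t_{m+1})-P(t_m))\| + \Bigl\|\sum_{n=0}^m\int_{t_n}^{t_{n+1}}(\vB(\vu(s))-\vB(\tilde{\vu}^n))\,dW(s)\Bigr\|\Bigr).
\end{align*}
I would then square this inequality, take $\mE[\cdot]$, multiply by $k$, and sum over $0\le m\le M$, estimating the five contributions separately. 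The first two come from Theorem \ref{thm3.3}: directly $\mE[k\sum_m\|\ve_{\tilde{\vu}}^{m+1}\|^2]\le Ck^{1/2}$, while for the second, since $k\sum_m 1\le T+k$, one has $\mE[k\sum_m\|k\sum_{n}\nab\ve_{\tilde{\vu}}^{n+1}\|^2]\le(T+k)\max_\ell\mE[\|k\sum_{n}\nab\ve_{\tilde{\vu}}^n\|^2]\le Ck^{1/2}$.

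The temporal-regularity term is $O(k)$ after a Cauchy--Schwarz in time together with the H\"older estimate \eqref{eq2.20b}, and the pressure-increment term is $O(k)$ by the H\"older continuity \eqref{holder_pressure}. The stochastic term is treated exactly as the term {\tt V} in the proof of Theorem \ref{thm3.3}: It\^o's isometry, the Lipschitz bound \eqref{eq2.6a}, and the temporal H\"older estimate \eqref{eq2.20a} reduce it to $Ck + Ck\sum_{m}k\sum_{n=0}^m\mE[\|\ve_{\tilde{\vu}}^n\|^2]$, and invoking Theorem \ref{thm3.3} once more bounds this by $Ck^{1/2}$. Summing the dominant contributions gives $\mE[k\sum_{m=0}^M\|\E_p^m\|^2]\le Ck^{1/2}$, and taking square roots yields \eqref{eq3.41}. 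The main obstacle is the correct handling of the inf-sup step in the stochastic framework: one must confirm that $\E_p^m(\omega)$ lies in the zero-mean quotient space for a.e.\ $\omega$ so that the deterministic, domain-dependent constant $\beta$ applies pathwise, and that the supremum over test functions may be taken before passing to the expectation without destroying the It\^o-isometry structure of the stochastic term. Once the pathwise estimate is secured, the remaining work is a careful but routine reuse of Theorem \ref{thm3.3} and the H\"older continuity lemmas, and the $k^{1/2}$ rate of the squared pressure error (hence the $k^{1/4}$ final rate) is inherited directly from the velocity error estimate.
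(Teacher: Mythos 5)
Your proposal follows essentially the same route as the paper's own proof: test the summed error equation \eqref{eq3.21} with an arbitrary $\vv\in\vH^1_{per}(D)$, invoke the inf-sup condition \eqref{inf-sup} to convert the resulting bound on $(\E_p^m,\div\vv)/\|\nab\vv\|$ into a bound on $\|\E_p^m\|$, then square, apply $k\sum_m$ and $\mE[\cdot]$, and estimate the five contributions exactly as you describe (Theorem \ref{thm3.3} for the velocity errors, the H\"older estimates \eqref{eq2.20a}--\eqref{eq2.20b} and \eqref{holder_pressure} for the regularity terms, and It\^o isometry plus \eqref{eq2.6a} for the stochastic term). The only slip is that the inf-sup condition gives $\|\E_p^m\|\le\beta^{-1}\sup(\cdots)$ rather than $\beta\sup(\cdots)$, but this only affects how the constant depends on $\beta$ and not the rate.
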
 
	
	\begin{proof}
		We first recall the following inf-sup condition (cf. \cite{BS2008}): 
		\begin{align}\label{inf-sup}
			\sup_{\vv \in \vH^1_{per}(D)} \frac{\bigl(q,\div \vv\bigr)}{\|\nab\vv\|}\geq 	\beta\, \|q\| \qquad\forall q\in  L^2_{per}(D)/\mathbb{R},
		\end{align}
		where $\beta$ is a positive constant.
		
		Below we reuse all the notations from Theorem \ref{thm3.3}. From the error equation  \eqref{eq3.21} we obtain for all $\vv\in \vH^1_{per}(D)$
		\begin{align}\label{eq3.43}
			\bigl(\E_p^m,\div \vv\bigr) &= \bigl(\ve_{\tilde{\vu}}^{m+1},\vv\bigr) + \biggl(k\sum_{n=0}^{m}\nab\ve_{\tilde{\vu}}^{n+1},\nab\vv\biggr)\\\nonumber
			&\quad-\biggl(\sum_{n=0}^m\int_{t_n}^{t_{n+1}}\nab(\vu(t_{n+1}) - \vu(s))\, ds, \nab\vv\biggr)\\\nonumber
			&\quad+ \bigl(\nab(P(t_{m+1}) - P(t_m)), \vv\bigr)\\\nonumber
			&\quad- \biggl(\sum_{n=0}^m\int_{t_n}^{t_{n+1}}(\vB(\vu(s)) - \vB(\tilde{\vu}^n))\, dW(s), \vv\biggr) .
		\end{align}
	By using Schwarz and Poincar\'e inequalities on the right side of \eqref{eq3.43}, we get
	\begin{align}
		\bigl(\E_p^m,\div \vv\bigr) &\leq C\|\ve_{\tilde{\vu}}^{m+1}\|\|\nab\vv\| + \biggl\|k\sum_{n=0}^m\nab\ve_{\tilde{\vu}}^{n+1}\biggr\|\|\nab\vv\|\\\nonumber
		&\quad + \biggl\|\sum_{n=0}^m\int_{t_n}^{t_{n+1}}\nab(\vu(t_{n+1})-\vu(s))\, ds\biggr\|\|\nab\vv\|\\ \nonumber
		&\quad + C\|\nab(P(t_{m+1}) - P(t_m))\|\|\nab\vv\|\\\nonumber
		&\quad + C\biggl\|\sum_{n=0}^m\int_{t_n}^{t_{n+1}}(\vB(\vu(s)) - \vB(\tilde{\vu}^n))\, dW(s)\biggr\| \|\nab\vv\|
	\end{align}
Applying \eqref{inf-sup} yields 
\begin{align}\label{eq3.45}
	\beta\|\E_p^m\| &\leq \sup_{\vv \in \vH^1_{per}(D)}\frac{\bigl(\E_p^m,\div \vv\bigr)}{\|\nab\vv\|}\\\nonumber
	&\leq C\|\ve_{\tilde{\vu}}^{m+1}\| + \biggl\|k\sum_{n=0}^m\nab\ve_{\tilde{\vu}}^{n+1}\biggr\|\\\nonumber
	&\quad+\biggl\|\sum_{n=0}^m\int_{t_n}^{t_{n+1}}\nab(\vu(t_{n+1})-\vu(s))\, ds\biggr\|\\
	\nonumber
	&\quad+ C\|\nab(P(t_{m+1}) - P(t_m))\|\\\nonumber
	&\quad+C\biggl\|\sum_{n=0}^m\int_{t_n}^{t_{n+1}}(\vB(\vu(s)) - \vB(\tilde{\vu}^n))\, dW(s)\biggr\|.
\end{align}
Next, squaring both sides of \eqref{eq3.45} followed by applying operators $k\sum_{m=0}^{\ell}$ and $\mE[\cdot]$, we obtain
\begin{align}\label{eq3.46}
	\beta^2\mE\biggl[k\sum_{m=0}^{\ell}\|\E_p^m\|^2\biggr] 
	&\leq C\mE\biggl[k\sum_{m=0}^{\ell}\|\ve_{\tilde{\vu}}^{m+1}\|^2\biggr] + Ck\sum_{m=0}^{\ell}\mE\biggl[\biggl\|k\sum_{n=0}^m\nab\ve_{\tilde{\vu}}^{n+1}\biggr\|^2\biggr]\\\nonumber
	&\,\,+Ck\sum_{m=0}^{\ell}\mE\biggl[\biggl\|\sum_{n=0}^m\int_{t_n}^{t_{n+1}}\nab(\vu(t_{n+1})-\vu(s))\, ds\biggr\|^2\biggr]\\\nonumber
	&\,\,+ Ck\sum_{m=0}^{\ell}\mE\bigl[\|\nab(P(t_{m+1}) - P(t_m))\|^2\bigr]\\\nonumber
	&\,\,+Ck\sum_{m=0}^{\ell}\mE\biggl[\biggl\|\sum_{n=0}^m\int_{t_n}^{t_{n+1}}(\vB(\vu(s)) - \vB(\tilde{\vu}^n))\, dW(s)\biggr\|^2\biggr]\\   \nonumber
	&\,\,:= {\tt I + II + III + IV + V}.
\end{align}	

We now bound each term above as follows. Using Theorem \ref{thm3.3} we get
\begin{align}\label{eq3.47}
	{\tt I + II} &= C\mE\biggl[k\sum_{m=0}^{\ell}\|\ve_{\tilde{\vu}}^{m+1}\|^2\biggr] + Ck\sum_{m=0}^{\ell}\mE\biggl[\biggl\|k\sum_{n=0}^m\nab\ve_{\tilde{\vu}}^{n+1}\biggr\|^2\biggr]\\
	\nonumber
	&\leq Ck^{\frac12}.
\end{align}
By the H\"older continuities given in \eqref{eq2.20b} and \eqref{holder_pressure}, we have 
\begin{align}\label{eq3.48}
	{\tt III + IV} &= Ck\sum_{m=0}^{\ell}\mE\biggl[\biggl\|\sum_{n=0}^m\int_{t_n}^{t_{n+1}}\nab(\vu(t_{n+1})-\vu(s))\, ds\biggr\|^2\biggr]\\\nonumber
	&\qquad+ Ck\sum_{m=0}^{\ell}\mE\bigl[\|\nab(P(t_{m+1}) - P(t_m))\|^2\bigr]\\\nonumber
	&\leq Ck\sum_{m=0}^{\ell}\mE\biggl[\sum_{n=0}^m\int_{t_n}^{t_{n+1}}\|\nab(\vu(t_{n+1})-\vu(s))\|^2\, ds\biggr]\\\nonumber
	&\quad+ Ck\sum_{m=0}^{\ell}\mE\bigl[\|\nab(P(t_{m+1}) - P(t_m))\|^2\bigr]\\\nonumber
	&\leq Ck.
\end{align}
By using the It\^o isometry and Theorem \ref{thm3.3} and \eqref{eq2.20a}, we conclude that
\begin{align}\label{eq3.49}
	{\tt V} &= Ck\sum_{m=0}^{\ell}\mE\biggl[\biggl\|\sum_{n=0}^m\int_{t_n}^{t_{n+1}}(\vB(\vu(s)) - \vB(\tilde{\vu}^n))\, dW(s)\biggr\|^2\biggr]\\ \nonumber
	&=Ck\sum_{m=0}^{\ell}\mE\biggl[\sum_{n=0}^m\int_{t_n}^{t_{n+1}}\|\vB(\vu(s)) - \vB(\tilde{\vu}^n)\|^2\, ds\biggr]\\ \nonumber
	&\leq Ck\sum_{m=0}^{\ell}\mE\biggl[\sum_{n=0}^{m}\int_{t_n}^{t_{n+1}}\|\vu(s) - \tilde{\vu}^n\|^2\, ds\biggr]\\ \nonumber
	&\leq Ck\sum_{m=0}^{\ell}\mE\biggl[\sum_{n=0}^{m}\int_{t_n}^{t_{n+1}}\|\vu(s) - \vu(t_n)\|^2\, ds\biggr] + Ck\sum_{m=0}^{\ell}\mE\biggl[k\sum_{n=0}^{m}\|\ve_{\tilde{\vu}}^n\|^2\biggr]\\
	\nonumber
	&\leq Ck^{\frac12}.
\end{align}
Finally, substituting \eqref{eq3.47}--\eqref{eq3.49} into \eqref{eq3.46} yields 
\begin{align}
	\beta^2\mE\biggl[k\sum_{m=0}^{\ell}\|\E_p^m\|^2 \biggr] \leq Ck^{\frac12} \qquad 
	\mbox{for } 1\leq \ell \leq M.
\end{align}
The proof is complete.
\end{proof}
	
\begin{remark}
		It is interesting to point out that the above proof uses the technique from 
		the (non-splitting) mixed method error analysis although Chorin scheme is a splitting scheme. 
\end{remark}
	
	We conclude this subsection by stating two stability estimates for $(\tilde{\vu}^m,p^m)$ in high norms as immediate corollaries of the above error estimates,  they will be used in the next section in deriving error estimates for a fully discrete finite element Chorin method. We note that these stability estimates improve those given in Lemma \ref{lemma3.1}
	and may not be obtained directly without using the above error estimates.
	
	\begin{corollary}\label{cor3.5} 
		Under the assumptions of Theorem \ref{thm3.3}, there exists a positive constant $C$ which 
		depends on $D_T, \vu_0$ and $\vf$ such that 
		\begin{align}\label{eq3.56}
			& \mE \bigg[k\sum_{m=0}^M \biggl\|k\sum_{n=0}^m\nab p^n\biggr\|^2\bigg] \leq C,\\ \label{eq3.55}
			& \max_{0\leq m \leq M} \mE \biggl[\biggl\|k\sum_{n=0}^m\nab \tilde{\vu}^n\biggr\|^2\biggr] + \mE \bigg[k\sum_{m=0}^M \biggl\|k\sum_{n=0}^m\Delta \tilde{\vu}^n\biggr\|^2\bigg]\leq C.
		\end{align}	
	\end{corollary}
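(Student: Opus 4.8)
The plan is to deduce both estimates from the \emph{time-integrated} form of the scheme, which turns out to be a discrete pressure-stabilized Stokes problem with uniformly bounded data. Introduce the running sums $\vU^m:=k\sum_{n=1}^m\tilde{\vu}^n$, $\Pi^m:=k\sum_{n=0}^{m-1}p^n$ and $\widehat\Pi^m:=k\sum_{n=1}^m p^n$, so that $\nab\Pi^m=k\sum_{n=0}^{m-1}\nab p^n$ and $\Delta\vU^m=k\sum_{n=1}^m\Delta\tilde{\vu}^n$ are exactly the quantities to be bounded. Summing the momentum equation \eqref{eq3.4a} (with $\vf=0$) from $n=0$ to $m-1$ and telescoping gives
\[
\Delta\vU^m-\nab\Pi^m=\tilde{\vu}^m-\vu_0-\sum_{n=0}^{m-1}\vB(\tilde{\vu}^n)\Delta W_{n+1}=:{\bf G}^m,
\]
while summing the divergence relation \eqref{eq3.4b} yields $\div\vU^m=k\Delta\widehat\Pi^m$.

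First I would verify that the data is bounded in $L^2(\Omega;\vL^2_{per}(D))$ uniformly in $m$ and $k$. The terms $\tilde{\vu}^m$ and $\vu_0$ are controlled by \eqref{eq3.5}, and the discrete stochastic integral is handled by the It\^o isometry together with the linear growth \eqref{eq2.6b} and \eqref{eq3.5}, giving $\mE\bigl[\bigl\|\sum_{n=0}^{m-1}\vB(\tilde{\vu}^n)\Delta W_{n+1}\bigr\|^2\bigr]=k\sum_{n}\mE[\|\vB(\tilde{\vu}^n)\|^2]\le C$; hence $\max_m\mE[\|{\bf G}^m\|^2]\le C$. Because the Chorin pressure is lagged, the pressure in the momentum sum ($\Pi^m$) and in the divergence relation ($\widehat\Pi^m$) differ by $kp^m$. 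I dispose of this mismatch by rewriting the momentum identity as $\Delta\vU^m-\nab\widehat\Pi^m=\widehat{\bf G}^m$ with $\widehat{\bf G}^m:={\bf G}^m-k\nab p^m$, and by invoking the Helmholtz identity $k\nab p^m=({\bf I}-{\bf P}_{\mathbb H})\tilde{\vu}^m$ produced by Steps~2--3, which gives $\|k\nab p^m\|\le\|\tilde{\vu}^m\|$ and therefore $\max_m\mE[\|\widehat{\bf G}^m\|^2]\le C$ as well.

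The crux is the energy estimate. Testing $\Delta\vU^m-\nab\widehat\Pi^m=\widehat{\bf G}^m$ with $\Delta\vU^m$ and integrating by parts (periodicity removes all boundary terms), the pressure term becomes $(\nab\widehat\Pi^m,\Delta\vU^m)=-(\widehat\Pi^m,\Delta\div\vU^m)=-k(\widehat\Pi^m,\Delta^2\widehat\Pi^m)=-k\|\Delta\widehat\Pi^m\|^2\le0$, i.e.\ precisely the stabilization term, which carries the favorable sign. Thus
\[
\|\Delta\vU^m\|^2+k\|\Delta\widehat\Pi^m\|^2=(\widehat{\bf G}^m,\Delta\vU^m)\le\tfrac12\|\widehat{\bf G}^m\|^2+\tfrac12\|\Delta\vU^m\|^2,
\]
so $\|\Delta\vU^m\|\le\|\widehat{\bf G}^m\|$ and hence $\max_m\mE[\|\Delta\vU^m\|^2]\le C$, which is the $\Delta$-part of \eqref{eq3.55}. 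Since $\vU^m$ has vanishing mean, a Poincar\'e inequality upgrades this to $\max_m\mE[\|\nab\vU^m\|^2]\le C\max_m\mE[\|\Delta\vU^m\|^2]\le C$, the first part of \eqref{eq3.55}. Finally, reading $\nab\Pi^m=\Delta\vU^m-{\bf G}^m$ off the momentum identity gives $\max_m\mE[\|\nab\Pi^m\|^2]\le C$, i.e.\ \eqref{eq3.56} (the index shift to $k\sum_{n=0}^m\nab p^n$ costs only the bounded term $k\nab p^m$). As all bounds are uniform in $m$, the outer averages $k\sum_{m=0}^M$ in \eqref{eq3.56}--\eqref{eq3.55} merely contribute a factor $k(M+1)\le T+k$. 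Alternatively, the first part of \eqref{eq3.55} follows directly from Theorem~\ref{thm3.3} by writing $k\sum_n\nab\tilde{\vu}^n=k\sum_n\nab\vu(t_n)-k\sum_n\nab\ve_{\tilde{\vu}}^n$ and bounding the two pieces by \eqref{eq2.9} and Theorem~\ref{thm3.3}, respectively.

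The step I expect to be the main obstacle is making the pressure-coupling term nonnegative \emph{despite} the Chorin time-lag: one must correctly recognize the integrated scheme as a stabilized Stokes system and eliminate the discrepancy between $\Pi^m$ and $\widehat\Pi^m$ without reintroducing an uncontrolled $\|\Delta p^m\|$. The Helmholtz identity $k\nab p^m=({\bf I}-{\bf P}_{\mathbb H})\tilde{\vu}^m$ is exactly what keeps this correction at the level of the already-bounded data, and it is the reason the time-averaged quantities stay $O(1)$ while the pointwise quantities in Lemma~\ref{lemma3.1} blow up like $k^{-1}$.
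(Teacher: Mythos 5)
Your proposal is correct, but it takes a genuinely different --- and more self-contained --- route than the paper's. The paper obtains \eqref{eq3.56} by comparison with the exact solution: it writes $k\sum_{n\le m}\nab p^n=\nab P(t_m)-\nab\E_p^m$ and invokes the stability bound \eqref{eq210} for $\nab P$ together with the pressure error analysis of Theorems \ref{thm3.3}--\ref{thm3.4}; it then proves $\eqref{eq3.55}_2$ by testing the summed momentum equation \eqref{eq3.20} with $-k\sum_n\Delta\tilde{\vu}^{n+1}$ and controlling the pressure cross term by Young's inequality and the already-established \eqref{eq3.56}, while $\eqref{eq3.55}_1$ comes from discrete Jensen and $\eqref{eq3.5}_3$ (essentially your ``alternative'' remark). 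You instead never leave the discrete level: by converting the cross term $(\nab\widehat\Pi^m,\Delta\vU^m)$ exactly into $-k\|\Delta\widehat\Pi^m\|^2$ through the summed divergence relation $\div\vU^m=k\Delta\widehat\Pi^m$, the stabilization term enters the energy identity with a favorable sign, so the pressure bound \eqref{eq3.56} drops out as an output of the same computation rather than being needed as an input; your treatment of the pressure time-lag via $k\nab p^m=({\bf I}-{\bf P}_{\mathbb H})\tilde{\vu}^m$ is exactly the right device to keep the correction at the level of data already controlled by \eqref{eq3.5}. What this buys: the argument uses only Lemma \ref{lemma3.1} and the structure of Algorithm 1 (no H\"older continuity of $(\vu,P)$, no inf-sup constant, no error estimates), and it yields the stronger pointwise-in-$m$ bounds $\max_m\mE[\|k\sum_{n\le m}\Delta\tilde{\vu}^n\|^2]\le C$ and $\max_m\mE[\|k\sum_{n\le m}\nab p^n\|^2]\le C$, of which the stated time averages are immediate consequences --- notably, this shows that the paper's remark that these estimates ``may not be obtained directly without using the above error estimates'' is overly pessimistic. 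Two small points to tidy up: justify the integration by parts $(\nab\widehat\Pi^m,\Delta\vU^m)=-(\Delta\widehat\Pi^m,\div\vU^m)$ at the available regularity ($\tilde{\vu}^n\in\vH^2_{per}(D)$ and $p^n\in H^2_{per}(D)$ by elliptic regularity on the torus, so you never need $\Delta^2\widehat\Pi^m$ pointwise), and state the convention $p^0=0$ so that the index shifts among $\Pi^m$, $\widehat\Pi^m$ and the sums $k\sum_{n=0}^m$ appearing in the corollary are consistent.
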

	
	\begin{proof}
		\eqref{eq3.56} follows immediately from estimates \eqref{eq210} and \eqref{eq3.41} and  $\eqref{eq3.55}_1$ follows straightforwardly from the discrete Jensen inequality and $\eqref{eq3.5}_3$. It remains to prove $\eqref{eq3.55}_2$. To that end, testing \eqref{eq3.20} by $-k\sum_{n=0}^{m} \Delta\tilde{\vu}^{n+1}$, we obtain
		\begin{align}\label{eq3.53}
		 \biggl\|k\sum_{n=0}^{m}\Delta\tilde{\vu}^{n+1}\biggr\|^2 &= \Bigl(\tilde{\vu}^{m+1}-\vu_0, k\sum_{n=0}^{m} \Delta\tilde{\vu}^{n+1}\Bigr) \\\nonumber
		 &\quad+ \biggl(k\sum_{n=0}^m\nab p^n, k\sum_{n=0}^m\Delta\tilde{\vu}^{n+1}\biggr) \\\nonumber
		 &\quad- \biggl(\sum_{n=0}^m\int_{t_n}^{t_{n+1}}\vB(\tilde{\vu}^n)\, dW(s), k\sum_{n=0}^m\Delta\tilde{\vu}^{n+1}\biggr)\\\nonumber
		 &\leq \|\tilde{\vu}^{m+1}-\vu_0\|^2 + \frac{1}{4}\biggl\|k\sum_{n=0}^{m}\Delta\tilde{\vu}^{n+1}\biggr\|^2 \\\nonumber
		 &\quad+ \biggl\|k\sum_{n=0}^m\nab p^n\biggr\|^2 + \frac14\biggl\|k\sum_{n=0}^{m}\Delta\tilde{\vu}^{n+1}\biggr\|^2\\\nonumber
		 &\quad+ \biggl\|\sum_{n=0}^m\int_{t_n}^{t_{n+1}}\vB(\tilde{\vu}^n)\, dW(s)\biggr\|^2 + \frac14\biggl\|k\sum_{n=0}^{m}\Delta\tilde{\vu}^{n+1}\biggr\|^2.  
		\end{align}
		After an rearrangement, applying operators $k\sum_{m=0}^M$ and $\mE[\cdot]$ to \eqref{eq3.53} we obtain
		\begin{align}\label{eq3.54}
		\frac14\mE\biggl[k\sum_{m=0}^M\biggl\|k\sum_{n=0}^{m}\Delta\tilde{\vu}^{n}\biggr\|^2\biggr]
		&\leq \mE\biggl[ k\sum_{m=0}^M \biggl(\|\tilde{\vu}^m - \vu^0\|^2   +   \biggl\|k\sum_{n=0}^m\nab p^n\biggr\|^2 \biggr) \biggr] \\\nonumber
			&\quad+ k\sum_{m=0}^M\mE\biggl[\biggl\|\sum_{n=0}^{m-1}\int_{t_n}^{t_{n+1}}\vB(\tilde{\vu}^n)\, dW(s)\biggr\|^2\biggr].
		\end{align}
The first two terms can be bounded by using \eqref{eq3.5} and \eqref{eq3.56}. The third term can be  controlled by using the It\^o isometry and \eqref{eq3.5} as follows:
	\begin{align*}
		k\sum_{m=0}^M\mE\biggl[\biggl\|\sum_{n=0}^{m-1}\int_{t_n}^{t_{n+1}}\vB(\tilde{\vu}^n)\, dW(s)\biggr\|^2\biggr] &= k\sum_{m=0}^M\mE\biggl[\sum_{n=0}^{m-1}\int_{t_n}^{t_{n+1}}\|\vB(\tilde{\vu}^n)\|^2\, ds\biggr]\\\nonumber
		&\leq C k\sum_{m=0}^M\mE\biggl[k\sum_{n=0}^{m-1}\|\tilde{\vu}^n\|^2\, ds\biggr] \leq C.
	\end{align*}
The proof is complete.
\end{proof}

%%%%%%%%%%%%%%%
	\subsection{A modified Chorin projection scheme}\label{sub3.2} 
	In this subsection, we consider a modification of Algorithm 1 which was 
	already pointed out in \cite{CHP12} but did not analyze there.  
	The modification is to perform 
	a Helmholtz decomposition of $\vB(\tilde{\vu}^m)$ at each time step which allows us  to eliminate the curl-free part in Step 1 of Algorithm 1, this then results in a divergent-free 
	Helmholtz projected noise. The goal of this subsection is to present a 
	complete convergence analysis for the modified Chorin scheme which includes deriving stronger error estimates for both velocity and pressure approximations than those for the standard Chorin scheme. We note that this Helmholtz decomposition enhancing technique was
	also used in \cite{Feng1} to improve the standard mixed finite element method for \eqref{eq1.1}.  
	
	\subsubsection{\bf Formulation of the modified Chorin scheme}
	For ease of the presentation, we assume $W(t)$ is
	a real-valued Wiener process and independent of the spatial variable. The case of more general $W(t)$ can be dealt with  similarly. 
	The modified Chorin scheme is given as follows. 
	
	\medskip
	\noindent
	{\bf Algorithm 2} 
	\smallskip
	
	Set $\tilde{\vu}^0 = \vu^0 = \vu_0$. For $m=0,1,\cdots, M-1$, do 
	the following five steps.
	
	\smallskip
	{\em Step 1:} Given $\tilde{\vu}^m \in L^2(\Ome, \vH^1_{per}(D))$, find $\xi^m \in L^2(\Ome,H^1_{per}(D)/\mathbb{R})$ such that $\mP$-a.s.
	\begin{align}\label{eq_3.1}
		%\big(\nab\xi^m,\nab\phi\big) = \big(\vB(\tilde{\vu}^m),\nab\phi\big) \qquad \forall \phi \in H^1_{per}(D).
		\Delta \xi^m = \mbox{div } \vB(\tilde{\vu}^m) \qquad \mbox{in } D.
	\end{align}
	
	{\em Step 2:} Set $\pmb{\eta}^m_{\tilde{\vu}} = \vB(\tilde{\vu}^m) - \nab\xi^m$. Given $\vu^m \in L^2(\Ome,\bH)$ and $\tilde{\vu}^m \in L^2(\Ome,\vH_{per}^1(D))$, find $\tilde{\vu}^{m+1} \in L^2(\Ome,\vH_{per}^1(D))$ such that $\mP$-a.s.
	\begin{align}\label{eq_3.2}
		\tilde{\vu}^{m+1} - k \Delta\tilde{\vu}^{m+1} & = \vu^m  + k\vf^{m+1} + \pmb{\eta}^m_{\tilde{\vu}}\Delta W_{m+1}\qquad\mbox{in } D.
	\end{align}

	\smallskip
	{\em Step 3:} Find $r^{m+1} \in L^2(\Ome,H^1_{per}(D)/\mathbb{R})$ such that $\mP$-a.s.
	\begin{subequations}\label{eq_3.3}
		\begin{alignat}{2}\label{eq_3.3a}		
			-\Delta r^{m+1} &= -\frac{1}{k}\div \tilde{\vu}^{m+1} &&\qquad \mbox{in } D.
			%	\p_{\mathbf{n}} r^{m+1} &= 0 &&\qquad \mbox{on } \p D. \label{eq_3.3b}
		\end{alignat}
	\end{subequations}
	
	\smallskip
	{\em Step 4:} Define $\vu^{m+1} \in L^2(\Ome, \bH)$ as
	\begin{align} \label{eq_step3a}
		\vu^{m+1} &= \tilde{\vu}^{m+1} - k \nab r^{m+1} \qquad \mbox{in } D.
	\end{align}
	
	\smallskip
	{\em Step 5:} Define the pressure approximation $p^{m+1}$ as
	\begin{align}\label{eq_3.5}
		p^{m+1} = r^{m+1} + \frac{1}{k}\xi^{m}\Delta W_{m+1} \qquad \mbox{in } D. 
	\end{align}

	\begin{remark}
		It follows from \eqref{eq2.6b} and \eqref{eq_3.1} that the Helmholtz projection $\pmb{\eta}^m_{\tilde{\vu}}$ can be bounded in terms of $\tilde{\vu}^m$ as follows:
		\begin{align}\label{eq_3.6}
			\|\pmb{\eta}^m_{\tilde{\vu}}\|_{L^2} &\leq \|\vB(\tilde{\vu}^m)\|_{L^2} + \|\nab\xi^m_{\tilde{\vu}}\|_{L^2} \leq 2 \|\vB(\tilde{\vu}^m)\|_{L^2} \leq C\|\tilde{\vu}^m\|_{L^2}
		\end{align}
	\end{remark}

%%%%%%%%%%%%%%%%
	\subsubsection{\bf Stability estimates for the modified Chorin scheme} 
	In this subsection we first state some stability estimates for Algorithm 2.  
	We then recall the Euler-Maruyama 
	scheme for \eqref{eq1.1} and its stability and error estimates from \cite{Feng1}, 
	which will be utilized as a tool in the stability and error analysis of the 
	modified Chorin scheme in the next subsection. 
	
	\begin{lemma}\label{lemma3.2} 
		The discrete processes $\{(\tilde{\vu}^m,r^m)\}_{m=0}^M$ 
		% \in L^2(\Ome,\mV_{per})\times L^2(\Ome,H^1_{per}(D)/\mathbb{R});0\leq m\leq M\}$ 
		defined by Algorithm 2 satisfy 
		\begin{subequations}
			\begin{align}
				\max_{0\leq m \leq M}\mE [\|\tilde{\vu}^m\|^2] + \mE \bigg[\sum_{m=1}^{M} \|\tilde{\vu}^m - \tilde{\vu}^{m-1}\|^2\bigg] + \mE \bigg[k\sum_{m=0}^{M} \|\nab \tilde{\vu}^m\|^2\bigg] &\leq C, \\
				\mE \bigg[k^2\sum_{m=0}^{M}\|\nab r^m\|^2\bigg] &\leq C,
			\end{align}
		\end{subequations}
		where $C$ is a positive constant which depends on $D_T, \vu_0$ and $\vf$.	
	\end{lemma}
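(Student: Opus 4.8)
The plan is to mirror the proof of Lemma \ref{lemma3.1} for the standard Chorin scheme, the only genuinely new ingredient being the linear-growth bound \eqref{eq_3.6} on the Helmholtz-projected noise $\pmb{\eta}^m_{\tilde{\vu}}$. First I would recast Algorithm 2 in the compact ``pressure-stabilized'' form used for Algorithm 1. Substituting the previous-level relation $\vu^m = \tilde{\vu}^m - k\nab r^m$ (see \eqref{eq_step3a}) into the momentum equation \eqref{eq_3.2} and rearranging yields
\begin{align*}
\tilde{\vu}^{m+1} - \tilde{\vu}^m - k\Delta\tilde{\vu}^{m+1} + k\nab r^m &= k\vf^{m+1} + \pmb{\eta}^m_{\tilde{\vu}}\Delta W_{m+1},\\
\div\tilde{\vu}^{m+1} - k\Delta r^{m+1} &= 0,
\end{align*}
which is \emph{precisely} the system \eqref{eq3.4} with the pressure $p^n$ replaced by the pseudo-pressure $r^m$ and the noise amplitude $\vB(\tilde{\vu}^n)$ replaced by its divergence-free part $\pmb{\eta}^m_{\tilde{\vu}}$.

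Given this reformulation, I would run the energy argument of Lemma \ref{lemma3.1} essentially verbatim: test the momentum equation by $\tilde{\vu}^{m+1}$ and the divergence equation by $r^m$, then remove the coupling term $k(\nab r^m,\tilde{\vu}^{m+1})$ using the identity $k(\nab r^{m+1},\nab(r^{m+1}-r^m)) = (\tilde{\vu}^{m+1}-\tilde{\vu}^m,\nab r^{m+1})$ obtained by integrating $\div\tilde{\vu}^{m+1} - k\Delta r^{m+1}=0$ by parts under the periodic boundary condition. Applying $2a(a-b)=a^2-b^2+(a-b)^2$ to the velocity increments and to the $r$-terms produces, after taking expectations, a telescoping identity carrying the dissipation $k\|\nab\tilde{\vu}^{m+1}\|^2$ and the stabilization $k^2\|\nab r^{m+1}\|^2$ on the left-hand side; the forcing term is controlled by Schwarz and Young's inequalities exactly as in \eqref{eq3.12b}.

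The one step requiring the new ingredient is the stochastic term. Since $\pmb{\eta}^m_{\tilde{\vu}}$ and $\tilde{\vu}^m$ are $\cF_{t_m}$-measurable while $\Delta W_{m+1}$ is independent of $\cF_{t_m}$ with zero mean, I would first pass to the increment, $\mE[(\pmb{\eta}^m_{\tilde{\vu}}\Delta W_{m+1},\tilde{\vu}^{m+1})] = \mE[(\pmb{\eta}^m_{\tilde{\vu}}\Delta W_{m+1},\tilde{\vu}^{m+1}-\tilde{\vu}^m)]$, and then combine Young's inequality with the It\^o isometry to bound this by $2k\,\mE[\|\pmb{\eta}^m_{\tilde{\vu}}\|^2] + \frac18\mE[\|\tilde{\vu}^{m+1}-\tilde{\vu}^m\|^2]$. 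Here the bound \eqref{eq_3.6}, $\|\pmb{\eta}^m_{\tilde{\vu}}\| \leq C\|\tilde{\vu}^m\|$, is what closes the estimate: it converts the noise contribution into $Ck\,\mE[\|\tilde{\vu}^m\|^2]$, exactly the term that appeared in \eqref{eq3.12} for the standard scheme. Summing over $m$, absorbing the small increment terms into the left-hand side, and invoking the discrete Gronwall inequality then delivers both asserted estimates simultaneously, with the $\mE[k^2\sum_m\|\nab r^m\|^2]\leq C$ bound arising directly from the retained stabilization term.

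I do not anticipate a genuine obstacle: the Helmholtz decomposition is $L^2$-orthogonal, so projecting the noise onto its divergence-free part can only decrease its norm, and \eqref{eq_3.6} guarantees the projected noise satisfies the same growth bound as $\vB(\tilde{\vu}^m)$. Consequently the analysis of Lemma \ref{lemma3.1} transfers without structural change, and the only care needed is bookkeeping — tracking that the pseudo-pressure $r^m$ now plays the role previously held by $p^n$ throughout the estimates.
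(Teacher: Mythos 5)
Your proposal is correct and matches the paper's intent exactly: the paper omits this proof, stating only that it ``follows the same lines as those of Lemma \ref{lemma3.1},'' which is precisely the argument you reconstruct — the compact reformulation with $r^m$ in place of $p^n$, the same energy/telescoping identities, and the bound \eqref{eq_3.6} on $\pmb{\eta}^m_{\tilde{\vu}}$ to close the noise estimate as in \eqref{eq3.12}. No gaps.
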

	
	Since the proof of this lemma follows the same lines as those of Lemma \ref{lemma3.1}. 
	We omit the proof to save space.
	
	Next, we recall the Helmholtz enhanced Euler-Maruyama scheme for \eqref{eq1.1} which was 
	proposed and analyzed in \cite{Feng1}. This scheme will be used as an auxiliary scheme in our 
	error estimates for the velocity and pressure approximations of Algorithm 2 in the next subsection. The Euler-Maruyama scheme reads as
	\begin{subequations}\label{eq_318}
		\begin{alignat}{2}\label{eq_318a}
			({\vv}^{m+1} - \vv^m) - k\Delta \vv^{m+1} + k\nab q^{m+1} &= k\vf^{m+1} + \pmb{\eta}_{\vv}^m\Delta W_{m+1} &&\qquad \mbox{in}\, D,\\ 
			\div \vv^{m+1} &= 0 &&\qquad\mbox{in}\, D, \label{eq_318ab}
		\end{alignat}
	\end{subequations}
	where $\pmb{\eta}_{\vv}^m = \vB(\vv^m) - \nab\xi^m_{\vv}$ denotes the Helmholtz 
	projection of $\vB(\vv^m)$. 
	
	It was proved in \cite{Feng1} that the following stability and error estimates hold for 
	the solution of the above Euler-Maruyama scheme.  
	
	\begin{lemma}  \label{lem3.1}
		The discrete processes $\{(\vv^m,q^m)\}_{m=0}^M$
		%\in L^2(\Omega,\vH_{per}^1(D))\times L^2(\Omega,H_{per}^1(D)/\mathbb{R}); 0\leq m\leq M\}$
		defined by \eqref{eq_318} satisfy
		\begin{subequations}
			\begin{align}\label{eq3.3}
				\max_{0\leq m\leq M} \mE\bigl[\|\vv^m\|^2\bigr]  +\mE\Bigl[\sum^M_{m=1}\|\vv^m-\vv^{m-1}\|^2\Bigr] + \mE\Bigl[k\sum^M_{m=0}\|\nabla \vv^m\|^2\Bigr] 
				&\leq C,  \\
				\label{eq_3.9}
				\max_{0\leq m\leq M} \mE\Bigl[\|\nabla \vv^m\|^2\Bigr] +\mE\Bigl[\sum^M_{m=1} \Bigl(\|\nabla (\vv^m-\vv^{m-1})\|^2 + k\|{\bf A} \vv^m\|^2\Bigr) \Bigr] 
				&\leq C,  \\
				\mE\Bigl[k\sum_{m=0}^M \|\nabla q^m\|^2\Bigr]
				&\leq	C , 	\label{eq_3.10}
			\end{align}
		\end{subequations}
		%	provided that $\vv^m\in L^2(\Omega,\vH_{per}^1(D)\cap \vH^2(D))$ for $0\leq m\leq M$.
		where $C$ is a positive constant which depends on $D_T, \vu_0$ and $\vf$.
	\end{lemma}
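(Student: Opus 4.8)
The plan is to establish the three bounds \eqref{eq3.3}, \eqref{eq_3.9}, \eqref{eq_3.10} in that order, in each case testing the momentum equation \eqref{eq_318a} with a suitable multiple of $\vv^{m+1}$ or of ${\bf A}\vv^{m+1}$, exploiting that the pressure term is annihilated because $\vv^{m+1}$ and ${\bf A}\vv^{m+1}$ are divergence-free (via \eqref{eq_318ab}), and then controlling the stochastic increment by the It\^o isometry together with the $\cF_{t_m}$-measurability of $\pmb{\eta}_{\vv}^m$. The structural facts I would use repeatedly are that $\pmb{\eta}_{\vv}^m = {\bf P}_{\mathbb H}\vB(\vv^m)$ is divergence-free with $\mE[\|\pmb{\eta}_{\vv}^m\|^2]\le C(1+\mE[\|\vv^m\|^2])$ by \eqref{eq2.6b} (cf. \eqref{eq_3.6}), and that $\Delta W_{m+1}$ is independent of $\cF_{t_m}$ with $\mE[\Delta W_{m+1}]=0$ and $\mE[|\Delta W_{m+1}|^2]=k$.

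For the $L^2$-estimate \eqref{eq3.3} I would test \eqref{eq_318a} with $\vv^{m+1}$; the term $k(\nab q^{m+1},\vv^{m+1})=-k(q^{m+1},\div\vv^{m+1})=0$ drops out. Applying the identity $2(a-b,a)=\|a\|^2-\|b\|^2+\|a-b\|^2$ to the first term and taking expectations, the cross term $\mE[(\pmb{\eta}_{\vv}^m\Delta W_{m+1},\vv^m)]$ vanishes by adaptedness, while $\mE[(\pmb{\eta}_{\vv}^m\Delta W_{m+1},\vv^{m+1}-\vv^m)]$ is absorbed after bounding $\mE[\|\pmb{\eta}_{\vv}^m\Delta W_{m+1}\|^2]=k\,\mE[\|\pmb{\eta}_{\vv}^m\|^2]\le Ck(1+\mE[\|\vv^m\|^2])$. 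Summing over $m$ and invoking the discrete Gronwall inequality then gives the bounds on $\max_m\mE[\|\vv^m\|^2]$, $\mE[\sum\|\vv^m-\vv^{m-1}\|^2]$ and $\mE[k\sum\|\nab\vv^m\|^2]$. This step is routine and parallels the proof of Lemma \ref{lemma3.1}.

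The main work is the higher-norm estimate \eqref{eq_3.9}, which I would obtain by testing \eqref{eq_318a} with ${\bf A}\vv^{m+1}$. Since ${\bf A}\vv^{m+1}\in\mathbb H$ the pressure again drops, and $(\vv^{m+1}-\vv^m,{\bf A}\vv^{m+1})=(\nab(\vv^{m+1}-\vv^m),\nab\vv^{m+1})$ yields the telescoping $\tfrac12[\|\nab\vv^{m+1}\|^2-\|\nab\vv^m\|^2+\|\nab(\vv^{m+1}-\vv^m)\|^2]$ plus the dissipation $k\|{\bf A}\vv^{m+1}\|^2$. The crux is the stochastic term $\mE[(\pmb{\eta}_{\vv}^m\Delta W_{m+1},{\bf A}\vv^{m+1})]$: I would split ${\bf A}\vv^{m+1}={\bf A}\vv^m+{\bf A}(\vv^{m+1}-\vv^m)$ so that the ${\bf A}\vv^m$ part vanishes in expectation by independence, and then integrate by parts in the increment part, writing $(\pmb{\eta}_{\vv}^m\Delta W_{m+1},{\bf A}(\vv^{m+1}-\vv^m))=(\nab\pmb{\eta}_{\vv}^m\Delta W_{m+1},\nab(\vv^{m+1}-\vv^m))$, which I bound by $\tfrac14\|\nab(\vv^{m+1}-\vv^m)\|^2+|\Delta W_{m+1}|^2\|\nab\pmb{\eta}_{\vv}^m\|^2$; the first piece is absorbed by the telescoped increment term and the second has expectation $k\,\mE[\|\nab\pmb{\eta}_{\vv}^m\|^2]$, carrying the crucial factor $k$. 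I expect this term to be the main obstacle of the whole proof: to keep the constant bounded and $k$-independent one must control $\|\nab\pmb{\eta}_{\vv}^m\|$, for which I would use that on the torus ${\bf P}_{\mathbb H}$ commutes with $\nab$, so $\|\nab\pmb{\eta}_{\vv}^m\|\le\|\nab\vB(\vv^m)\|$, and then an $H^1$ linear-growth bound on $\vB$ (an additional hypothesis available in the setting of \cite{Feng1}, stronger than the $L^2$-Lipschitz bound \eqref{eq2.6b} used here) to get $\mE[\|\nab\pmb{\eta}_{\vv}^m\|^2]\le C(1+\mE[\|\nab\vv^m\|^2])$. Summing in $m$, using the first estimate to bound $\sum_m k\,\mE[\|\nab\vv^m\|^2]$, and applying discrete Gronwall then closes the argument; note that the crude pathwise bound $|\Delta W_{m+1}|^2\|\pmb{\eta}_{\vv}^m\|^2/k$ fails here because its expectation lacks the factor $k$, which is exactly why the integration-by-parts structure is needed.

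Finally, for the pressure estimate \eqref{eq_3.10} I would apply the curl-free projection $I-{\bf P}_{\mathbb H}$ to \eqref{eq_318a}. Because $\vv^{m+1},\vv^m$ are divergence-free, so is $\Delta\vv^{m+1}$ (as $\div\Delta\vv^{m+1}=\Delta\div\vv^{m+1}=0$), and because $\pmb{\eta}_{\vv}^m$ is divergence-free, all these terms are annihilated, leaving $\nab q^{m+1}=(I-{\bf P}_{\mathbb H})\vf^{m+1}$. Hence $\|\nab q^{m+1}\|\le\|\vf^{m+1}\|$ and $\mE[k\sum_m\|\nab q^m\|^2]\le\mE[k\sum_m\|\vf^m\|^2]\le C$. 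This clean cancellation is exactly where the divergence-free structure of the projected noise pays off; alternatively one could route the estimate through the inf-sup condition \eqref{inf-sup} and the bound \eqref{eq_3.9}, but the projection argument is both shorter and more transparent.
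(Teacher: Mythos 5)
The paper gives no proof of this lemma at all---it is quoted from \cite{Feng1} as an auxiliary result---so there is no internal proof to compare against; judged on its own terms, your argument is correct and is essentially the standard one for this scheme. The basic energy estimate, the test with ${\bf A}\vv^{m+1}$ together with the splitting of the noise term and the integration by parts $(\pmb{\eta}_{\vv}^m\Delta W_{m+1},{\bf A}(\vv^{m+1}-\vv^m))=\Delta W_{m+1}\,(\nab\pmb{\eta}_{\vv}^m,\nab(\vv^{m+1}-\vv^m))$ (which correctly supplies the factor $k$ after taking expectations), and the observation that applying $I-{\bf P}_{{\mathbb H}}$ to \eqref{eq_318a} kills every term except $k\nab q^{m+1}$ and $k\vf^{m+1}$ on the torus (where $\Delta$ preserves ${\mathbb H}$ and $\pmb{\eta}_{\vv}^m$ is divergence-free) are all sound; the last point is precisely the mechanism the paper's own remark invokes to explain why \eqref{eq_3.10} holds with a $k$-independent constant, and it is equivalent to testing the momentum equation with $\nab q^{m+1}$. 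The one caveat you flag is genuine: bounding $\mE[\|\nab\pmb{\eta}_{\vv}^m\|^2]$ by $C(1+\mE[\|\nab\vv^m\|^2])$ in the proof of \eqref{eq_3.9} requires an $H^1$ linear-growth hypothesis on $\vB$ that is strictly stronger than \eqref{eq2.6}; the present paper tacitly uses the same unstated hypothesis elsewhere (e.g.\ the term $C\|\nab\tilde{\vu}^n\|^2$ arising from $\|\nab\vB(\tilde{\vu}^n)\|^2$ in \eqref{eq3.16}, and the bound $\|\nab\vB(\tilde{\vu}^m)\|\leq C\|\nab\tilde{\vu}^m\|$ in \eqref{eq4.27}), so this is a gap in the stated assumptions of the paper rather than in your argument.
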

	
	\begin{lemma} 
		There hold the the following error estimates for the discrete processes $\{(\vv^m,q^m)\}_{m=0}^M$:
		\begin{subequations}
			\begin{align}\label{eq_err3.14}
				\max_{0\leq m \leq M} \bigl(\mE\big[\|\vu(t_m)-\vv^m\|^2\big]\bigr)^{\frac12} \hskip 1.5in & 	\\
				+ \bigg(\mE\bigg[k\sum_{m=0}^M\|\nab(\vu(t_m) - \vv^m)\|^2\bigg]\bigg)^{\frac12} &\leq C\sqrt{k}, \nonumber \\
				\label{eq_err3.15}
				\bigg(\mE\bigg[\bigg\|R(t_\ell) - k\sum_{m=0}^\ell q^m\bigg\|^2\bigg]\bigg)^{\frac12} &\leq C\sqrt{k}
			\end{align}
		\end{subequations}
		for $0\leq \ell\leq M$. Where $C$ is a positive constant which depends on $D_T,\vu_0$ and $\vf$. 
	\end{lemma}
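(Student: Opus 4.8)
The plan is to estimate the velocity and pressure errors separately, writing $\ve^m := \vu(t_m)-\vv^m$ and $\E_q^\ell := R(t_\ell) - k\sum_{m=0}^\ell q^m$. The decisive structural fact I would exploit is that the Helmholtz-projected noise $\pmb{\eta}_{\vv}^m = {\bf P}_{\mathbb H}\vB(\vv^m)$ is divergence-free and that $\div\vv^{m+1}=0$ by \eqref{eq_318ab}; together with $\div\vu=0$ this yields $\ve^{m+1}\in\mV$, so that the discrete pressure $q$ and the term $k\nab q^{m+1}$ drop out completely when the error equation is tested against $\ve^{m+1}$. This decoupling of the pressure from the velocity estimate is exactly what is unavailable for the standard Chorin scheme, where the $k\nab p^n$ time-lag contaminates the energy identity and forces the suboptimal $k^{1/4}$ rate; it is what makes the optimal $O(\sqrt{k})$ rate reachable here.

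For the velocity estimate \eqref{eq_err3.14} I would subtract the scheme \eqref{eq_318a}, tested against $\vv\in\mV$, from the increment of the variational identity \eqref{eq2.8a} over $[t_n,t_{n+1}]$ to obtain, for all $\vv\in\mV$,
\begin{align*}
	\bigl(\ve^{n+1}-\ve^n,\vv\bigr) + k\bigl(\nab\ve^{n+1},\nab\vv\bigr) &= -\int_{t_n}^{t_{n+1}}\bigl(\nab(\vu(s)-\vu(t_{n+1})),\nab\vv\bigr)\,ds \\
	&\quad + \int_{t_n}^{t_{n+1}}\bigl(\vB(\vu(s))-\vB(\vv^n),\vv\bigr)\,dW(s),
\end{align*}
using $(\pmb{\eta}_{\vv}^n,\vv)=(\vB(\vv^n),\vv)$ and $(\vB(\vu(s)),\vv)=(\pmb{\eta}(s),\vv)$ for $\vv\in\mV$. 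Taking $\vv=\ve^{n+1}$ and applying $2(a-b,a)=\|a\|^2-\|b\|^2+\|a-b\|^2$, the deterministic term is handled by Cauchy--Schwarz and Young, absorbing $\tfrac{k}{4}\|\nab\ve^{n+1}\|^2$ and leaving $\int_{t_n}^{t_{n+1}}\|\nab(\vu(s)-\vu(t_{n+1}))\|^2\,ds$, whose expectation is $O(k^2)$ by the \emph{pointwise} H\"older estimate \eqref{eq2.20b}; it is here that the periodic boundary condition is essential for optimality. The stochastic term is the delicate one: since $\ve^{n+1}$ is only $\cF_{t_{n+1}}$-measurable I would split $\ve^{n+1}=\ve^n+(\ve^{n+1}-\ve^n)$, so that the $\ve^n$ contribution has mean zero by the martingale property (the integrand is $\cF_s$-adapted and $\ve^n$ is $\cF_{t_n}$-measurable), while the increment contribution is bounded by $\tfrac14\|\ve^{n+1}-\ve^n\|^2$ plus $\bigl\|\int_{t_n}^{t_{n+1}}(\vB(\vu(s))-\vB(\vv^n))\,dW\bigr\|^2$, controlled through the It\^o isometry, the Lipschitz bound \eqref{eq2.6a}, and $\|\vu(s)-\vv^n\|^2\le 2\|\vu(s)-\vu(t_n)\|^2+2\|\ve^n\|^2$ with \eqref{eq2.20a}, giving $O(k^2)+Ck\,\mE[\|\ve^n\|^2]$. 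Summing over $n$, taking expectation and the maximum in the endpoint, and invoking the discrete Gronwall inequality then yields $\max_m\mE[\|\ve^m\|^2]+\mE[k\sum_m\|\nab\ve^m\|^2]\le Ck$, which is \eqref{eq_err3.14} after taking square roots.

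For the pressure estimate \eqref{eq_err3.15} I would reuse the \emph{summed} error equation, now tested against a general $\vv\in\vH^1_{per}(D)$ so that the pressure survives: isolating $(\E_q^{m+1},\div\vv)$ and applying the inf-sup condition \eqref{inf-sup} gives $\beta\|\E_q^{m+1}\|\le \sup_{\vv}(\E_q^{m+1},\div\vv)/\|\nab\vv\|$, whose right-hand side is a sum of a velocity part $C\|\ve^{m+1}\|+\bigl\|k\sum_{n=0}^m\nab\ve^{n+1}\bigr\|$, a consistency part $\bigl\|\sum_n\int_{t_n}^{t_{n+1}}\nab(\vu(s)-\vu(t_{n+1}))\,ds\bigr\|$, and a stochastic part $C\bigl\|\sum_n\int_{t_n}^{t_{n+1}}(\pmb{\eta}(s)-\pmb{\eta}_{\vv}^n)\,dW\bigr\|$ (using Poincar\'e to convert $\|\vv\|$ into $\|\nab\vv\|$). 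Squaring, taking expectation, and bounding term by term — the velocity pieces by the estimate just proved, the consistency piece by \eqref{eq2.20b}, and the stochastic piece by the It\^o isometry together with the contractivity of ${\bf P}_{\mathbb H}$ (so $\|\pmb{\eta}(s)-\pmb{\eta}_{\vv}^n\|\le\|\vB(\vu(s))-\vB(\vv^n)\|\le C\|\vu(s)-\vv^n\|$) and \eqref{eq2.20a} — all contributions are $O(k)$, and a final square root delivers \eqref{eq_err3.15}.

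The main obstacle is the stochastic term in the velocity step: the natural test function $\ve^{n+1}$ is not adapted at time $t_n$, so the martingale has no reason to be mean zero, and the whole argument hinges on the splitting $\ve^{n+1}=\ve^n+(\ve^{n+1}-\ve^n)$ that isolates an exactly mean-zero part and a small increment that can be absorbed. Equally important, and easy to overlook, is that optimality ($k$ rather than $\sqrt{k}$ inside the expectation) rests on the pointwise-in-time H\"older regularity \eqref{eq2.20b}; without the divergence-free structure of $\pmb{\eta}_{\vv}^m$ one could not test with $\ve^{n+1}\in\mV$ and would be driven back to the time-averaged, pressure-coupled analysis that yields only $k^{1/4}$.
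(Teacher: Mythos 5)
The paper does not actually prove this lemma: it is imported verbatim from \cite{Feng1} with only a citation, so there is no in-paper proof to compare against. Your argument is correct and is, in substance, the standard one used in \cite{Feng1} and mirrored elsewhere in this paper (compare the energy/Gronwall argument of Theorem \ref{thm3.8} and the inf-sup argument of Theorems \ref{thm3.4} and \ref{thm3.6}): the divergence-free structure of $\pmb{\eta}_{\vv}^m$ and of $\vv^{m+1}$ lets you test with $\ve^{n+1}\in\mV$ so the pressure decouples, the pointwise H\"older bound \eqref{eq2.20b} gives the $O(k^2)$ per-step consistency error needed for the optimal rate, the splitting $\ve^{n+1}=\ve^n+(\ve^{n+1}-\ve^n)$ handles the non-adaptedness of the test function in the noise term, and the inf-sup condition \eqref{inf-sup} converts the velocity bounds into the time-averaged pressure bound. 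The only (harmless) omission is the consistency error from $\vf$, which the paper sidesteps by setting $\vf=0$ in its error analyses.
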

	
	\begin{remark}
		We note that the reason for the estimate \eqref{eq_3.10} to hold is 
		because the Helmholtz projected noise $\pmb{\eta}_{\vv}^m$ is divergent-free. 
		Otherwise, the following weaker estimate can only be proved (cf. \cite{Feng}):
		\begin{align*}
			\mE\Bigl[k\sum_{m=0}^M \|\nabla q^m\|^2\Bigr] \leq\frac{C}{k}. 
		\end{align*}
	\end{remark}

	\subsubsection{\bf Error estimates for the modified Chorin scheme} 
	The goal of this subsection is to derive error estimates for both the velocity and pressure approximations generated by Algorithm 2. The anticipated error estimates are  
	stronger than those for the standard Chorin scheme proved in the previous subsection. 
	We note that our error estimate for the velocity approximation recovers 
	the same estimate obtained in \cite[Theorem 3.1]{CHP12} although the analysis given here is a lot simpler. On the other hand, the error estimate for the pressure approximation is  
	apparently new. The main idea of the proofs of this subsection is to use the 
	Euler-Maruyama scheme analyzed in \cite{Feng1} as an auxiliary scheme which bridges the 
	exact solution of \eqref{eq1.1} and the discrete solution of Algorithm 2. 
	
	The follow theorem gives an error estimate in strong norms for the velocity 
	approximation. 
	
	\begin{theorem}\label{thm3.8} 
		Let $\{(\tilde{\vu}^m, p^m)\}_{m=0}^M$ be the solution of Algorithm 2 
		and $\{(\vu(t),$ $P(t));\, 0\leq t\leq T\}$ be the solution of \eqref{eq1.1}. 
		Then there holds the following estimate:
		\begin{align}\label{eq_3.19}
			\max_{0\leq m \leq M}\bigg(\mE\bigg[\|\vu(t_m) &- \tilde{\vu}^m\|^2\bigg]\bigg)^{\frac12}\\\nonumber 
			&+ \bigg(\mE\bigg[k\sum_{m=0}^{M}\|\nab(\vu(t_m)-\tilde{\vu}^m)\|^2\bigg]\bigg)^{\frac12} 
			\leq C\sqrt{k},
		\end{align}
	\end{theorem}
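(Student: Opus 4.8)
The plan is to use the Helmholtz-enhanced Euler--Maruyama scheme \eqref{eq_318} as an auxiliary bridge between the exact solution and Algorithm 2. Writing
\[
\vu(t_m)-\tilde{\vu}^m = \bigl(\vu(t_m)-\vv^m\bigr) + \bigl(\vv^m-\tilde{\vu}^m\bigr)
\]
and invoking the error estimate \eqref{eq_err3.14} for the first difference, it suffices to show that
\[
\max_{0\le m\le M}\bigl(\mE[\|\vv^m-\tilde{\vu}^m\|^2]\bigr)^{\frac12} + \Bigl(\mE\Bigl[k\sum_{m=0}^M\|\nab(\vv^m-\tilde{\vu}^m)\|^2\Bigr]\Bigr)^{\frac12}\le Ck,
\]
which is of higher order than required; the $\sqrt{k}$ rate then comes entirely from the auxiliary scheme. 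I would set $\ve^m:=\vv^m-\tilde{\vu}^m$ and exploit that the projected velocity $\vu^m=\tilde{\vu}^m-k\nab r^m$ is exactly ${\bf P}_{\mathbb{H}}\tilde{\vu}^m$, so that $\ve^m$ splits $L^2$-orthogonally into a divergence-free part $\hat{\ve}^m:={\bf P}_{\mathbb{H}}\ve^m=\vv^m-\vu^m$ and a curl-free part $({\bf I}-{\bf P}_{\mathbb{H}})\ve^m=-k\nab r^m$.

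Subtracting the compact form of Algorithm 2 from \eqref{eq_318a} and testing the resulting error equation against the divergence-free field $\hat{\ve}^{m+1}$, every term that is a spatial gradient (the pressure gradients $k\nab q^{m+1}$ and $k\nab r^{m+1}$, together with the stabilization contribution $k^2\nab\Delta r^{m+1}$) is orthogonal to $\hat{\ve}^{m+1}$ and drops out, leaving the clean identity
\[
\bigl(\hat{\ve}^{m+1}-\hat{\ve}^m,\hat{\ve}^{m+1}\bigr) + k\|\nab\hat{\ve}^{m+1}\|^2 = \bigl((\pmb{\eta}_{\vv}^m-\pmb{\eta}^m_{\tilde{\vu}})\Delta W_{m+1},\hat{\ve}^{m+1}\bigr).
\]
Because the Helmholtz projection is linear and a contraction on $\vL^2_{per}(D)$ and $\vB$ satisfies \eqref{eq2.6a}, the noise discrepancy obeys $\|\pmb{\eta}_{\vv}^m-\pmb{\eta}^m_{\tilde{\vu}}\| = \|{\bf P}_{\mathbb{H}}(\vB(\vv^m)-\vB(\tilde{\vu}^m))\|\le C\|\ve^m\|$.

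I would then apply $2a(a-b)=a^2-b^2+(a-b)^2$, take expectations, and use the martingale property (so the $\hat{\ve}^m$ portion of the stochastic term vanishes) together with the It\^o isometry to control the stochastic contribution by $\tfrac14\mE[\|\hat{\ve}^{m+1}-\hat{\ve}^m\|^2]+Ck\,\mE[\|\ve^m\|^2]$. The decisive point is the curl-free component: applying $\div$ to Step 2 of Algorithm 2 and using that $\vu^m$ and the projected noise $\pmb{\eta}^m_{\tilde{\vu}}$ are both divergence-free shows that $\div\tilde{\vu}^{m+1}$ solves the discrete Helmholtz problem $\div\tilde{\vu}^{m+1}-k\Delta\div\tilde{\vu}^{m+1}=k\,\div\vf^{m+1}$, so that $\|k\nab r^m\|=\|({\bf I}-{\bf P}_{\mathbb{H}})\tilde{\vu}^m\|\le Ck\|\vf^m\|_{\vH^1}$ pointwise in time (and vanishes when $\vf=0$). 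Hence $\mE[\|\ve^m\|^2]=\mE[\|\hat{\ve}^m\|^2]+\mE[\|k\nab r^m\|^2]\le\mE[\|\hat{\ve}^m\|^2]+Ck^2$, and summing over $m$, using $\hat{\ve}^0=0$, the discrete Gronwall inequality gives $\max_m\mE[\|\hat{\ve}^m\|^2]+\mE[k\sum_m\|\nab\hat{\ve}^m\|^2]\le Ck^2$. Recombining the two orthogonal components and applying the triangle inequality with \eqref{eq_err3.14} yields the claimed $O(\sqrt{k})$ estimate.

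The step I expect to be the main obstacle is reconciling the exactly divergence-free Euler--Maruyama velocity $\vv^m$ with the only quasi-divergence-free intermediate velocity $\tilde{\vu}^m$ of the projection scheme, i.e.\ handling the time-lagged explicit pressure $k\nab r^m$ and the stabilization term $-k\Delta r^{m+1}$ in the constraint \eqref{eq_3.3a}. This is exactly where the Helmholtz enhancement pays off: projecting the error onto ${\mathbb{H}}$ annihilates all pressure terms in the energy identity, while the divergence-free projected noise forces $\div\tilde{\vu}^{m+1}$ to be $O(k)$, so the curl-free defect never degrades the convergence rate.
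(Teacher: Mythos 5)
Your proof is correct, and it reaches the same $O(\sqrt k)$ conclusion by the same overall architecture as the paper (the Helmholtz-enhanced Euler--Maruyama iterates $\{\vv^m\}$ serve as a bridge, and \eqref{eq_err3.14} supplies the $\sqrt k$), but the key step --- bounding $\ve^m=\vv^m-\tilde{\vu}^m$ --- is handled by a genuinely different mechanism. The paper tests the error equation with the \emph{full} error $\ve^{m+1}$ and then spends most of the proof eliminating the pressure coupling through the discrete divergence constraint \eqref{eq_3.24}, paying with Young's inequality and the stability bound $\mE[k\sum_m\|\nabla q^m\|^2]\le C$; this yields only $\mE[\|\ve^{\ell+1}\|^2]\le Ck$. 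You instead test with ${\bf P}_{\mathbb H}\ve^{m+1}=\hat\ve^{m+1}$, which annihilates every gradient term at once, and you control the complementary component by the observation that $\div\tilde{\vu}^{m+1}$ solves $(I-k\Delta)w=k\,\div\vf^{m+1}$ (since $\vu^m$ and $\pmb{\eta}^m_{\tilde\vu}$ are divergence-free), so $\|k\nabla r^m\|\le Ck\|\vf^m\|_{\vH^1}$ deterministically. This gives the sharper intermediate bound $\mE[\|\ve^m\|^2]\le Ck^2$, i.e.\ the two schemes differ at a higher order than either converges --- a nice structural insight the paper's proof does not expose. A few points need care when you write this out: (i) there is no term $k^2\nabla\Delta r^{m+1}$ in the momentum error equation \eqref{eq_3.23} (the stabilization sits in the constraint \eqref{eq_3.24}), though this is harmless since all gradients die under the projection; (ii) the identity $k(\nabla\ve^{m+1},\nabla\hat\ve^{m+1})=k\|\nabla\hat\ve^{m+1}\|^2$ uses that $\Delta$ commutes with ${\bf P}_{\mathbb H}$, which is where the periodic boundary condition is essential --- with a Neumann pressure condition the cross term $(\nabla\nabla r^{m+1},\nabla\hat\ve^{m+1})$ would not vanish; (iii) the pointwise bound on $k\nabla r^m$ requires $\vf(t_m)\in\vH^1_{per}(D)$, which is covered by the standing hypotheses (and trivially by the paper's convention $\vf=0$ in the error analysis); and (iv) to recover the gradient part of \eqref{eq_3.19} you also need $k\|\nabla^2 r^m\|\le C\|\div\tilde{\vu}^m\|\le Ck\|\div\vf^m\|$, since $\nabla\ve^m=\nabla\hat\ve^m-k\nabla^2r^m$ is not an orthogonal splitting of the $L^2$ norm of the gradient. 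With those details supplied, your argument is a valid and arguably cleaner alternative to the paper's proof.
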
 
	where $C$ is a positive constant which depends on $D_T, \vu_0$ and $\vf$.
	
	\begin{proof} Let $\ve^{m}_{\tilde{\vu}} = \vv^{m}-\tilde{\vu}^{m}$, $e_r^{m} = q^{m} - r^{m}$. Then, $\ve_{\tilde{\vu}}^{m} \in L^2(\Ome,\vH_{per}^1(D))$ and $e_r^{m} \in L^2(\Ome,H_{per}^1(D)/\mathbb{R})$. Subtracting \eqref{eq3.4} from \eqref{eq_318}  yields
		\begin{subequations} 
			\begin{alignat}{2}
				\label{eq_3.23}(\ve^{m+1}_{\tilde{\vu}} - \ve^m_{\tilde{\vu}}) - k\Delta \ve^{m+1}_{\tilde{\vu}} + k\nab e^m_r 
				&= - k(\nab(q^{m+1}-q^m)) && \\
				&\qquad + (\pmb{\eta}^m_{\vv}-\pmb{\eta}^m_{\tilde{\vu}})\Delta W_{m+1} &&\qquad\mbox{on} \, D, \nonumber\\
				\label{eq_3.24}\div \ve^{m+1}_{\tilde{\vu}} - k\Delta e_r^{m+1} &= -k\Delta q^{m+1} &&\qquad\mbox{on}\, D,\\
				\label{eq_3.25}\p_{\bf n} e_r^{m+1} &= \p_{\bf n} q^{m+1} &&\qquad\mbox{on}\, \p D.
			\end{alignat}
		\end{subequations}
		%and $\ve^0_{\tilde{\vu}} = 0$ in $D$. 
		Testing \eqref{eq_3.23} with $\ve^{m+1}_{\tilde{\vu}}$ and integrating by parts, 
		we obtain 
		\begin{align}
			\label{eq_3.69}(\ve^{m+1}_{\tilde{\vu}} - \ve^m_{\tilde{\vu}}, &\,\ve^{m+1}_{\tilde{\vu}}) + k\|\nab \ve^{m+1}_{\tilde{\vu}}\|^2 + k(\nab e_r^m,\ve^{m+1}_{\tilde{\vu}}) \\\nonumber
			&= - k(\nab(q^{m+1}-q^m),\ve^{m+1}_{\tilde{\vu}}) + ((\pmb{\eta}_{\vv}^m-\pmb{\eta}^m_{\tilde{\vu}})\Delta W_{m+1},\ve^{m+1}_{\tilde{\vu}}).
		\end{align}
		Using the algebraic identity $2ab = (a^2 - b^2) + (a-b)^2$ in the first term gives 
		\begin{align}\label{eq_3.27}
			\frac12 \bigl( \|\ve_{\tilde{\vu}}^{m+1}\|_{L^2}^2 &- \|\ve_{\tilde{\vu}}^m\|_{L^2}^2\bigr)  +  \frac12  \|\ve_{\tilde{\vu}}^{m+1}-\ve_{\tilde{\vu}}^m\|_{L^2}^2 \\ \nonumber
			&\qquad + k\|\nab \ve_{\tilde{\vu}}^{m+1}\|_{L^2}^2 +k(\nab e_r^m,\ve_{\tilde{\vu}}^{m+1}) \\\nonumber
			& = - k(\nab(q^{m+1}-q^m),\ve_{\tilde{\vu}}^{m+1}) + ((\pmb{\eta}_{\vv}^m- \pmb{\eta}^m_{\tilde{\vu}})\Delta W_{m+1},\ve_{\tilde{\vu}}^{m+1}).
		\end{align}
		
		Next, we derive a reformulation for each of the last term on the left-hand side and  the first term on the right of \eqref{eq_3.27} with a help of \eqref{eq_3.24}. Testing  \eqref{eq_3.24} by any $q \in L^2(\Ome,H_{per}^1(D)/\mathbb{R})$ and using  \eqref{eq_3.25}, we obtain  
		\begin{subequations}
			\begin{align}\label{eq_3.28}
				\bigl(\ve_{\tilde{\vu}}^{m+1},\nab q\bigr) - k\bigl(\nab e_r^{m+1},\nab q\bigr) &= -k\bigl(\nab q^{m+1},\nab q\bigr)\\
				\bigl(\ve_{\tilde{\vu}}^{m+1}-\ve_{\tilde{\vu}}^m,\nab q\bigr) - k\bigl(\nab (e_r^{m+1}-e_r^m),\nab q\bigr) &= -k\bigl(\nab (q^{m+1}-q^m),\nab q\bigr).\label{eq_3.29}
			\end{align}
		\end{subequations}
		
		Setting $q = e_r^{m}$ in \eqref{eq_3.28}, we obtain
		\begin{align}\label{eq_3.30}
			&\bigl(\ve_{\tilde{\vu}}^{m+1},\nab e_r^m\bigr) = k\bigl(\nab e_r^{m+1},\nab e_r^m\bigr) - k\bigl(\nab q^{m+1},\nab e_r^m\bigr),\\\nonumber
			&\hskip 0.6in
			= k\|\nab e_r^{m+1}\|^2 - k\bigl(\nab(e_r^{m+1}-e_r^m),\nab e_r^{m+1}\bigr) - k\bigl(\nab q^{m+1},\nab e_r^m\bigr).
		\end{align}
		and choosing $q = e_r^{m+1}$ in \eqref{eq_3.29}, we have 
		\begin{align}\label{eq_3.31} 
			k\bigl(\nab(e_r^{m+1} - e_r^m),\nab e_r^{m+1}\bigr) &= \bigl(\ve_{\tilde{\vu}}^{m+1} - \ve_{\tilde{\vu}}^m,\nab e_r^{m+1}\bigr) \\\nonumber &\qquad+ k \bigl(\nab(q^{m+1}-q^m),\nab e_r^{m+1}\bigr).
		\end{align}
		Substituting \eqref{eq_3.31} to \eqref{eq_3.30} yields
		\begin{align}\label{eq_3.32}
			\bigl(\ve_{\tilde{\vu}}^{m+1},\nab e_r^m\bigr)	& = k\|\nab e_r^{m+1}\|^2 - \bigl(\ve_{\tilde{\vu}}^{m+1}-\ve_{\tilde{\vu}}^m,\nab e_r^{m+1}\bigr) \\\nonumber 
			&\quad- k \bigl(\nab(q^{m+1}-q^m),\nab e_r^{m+1}\bigr)- k\bigl(\nab q^{m+1},\nab e_r^m\bigr).
		\end{align}
		Alternatively, setting $q = q^{m+1} - q^m$ in \eqref{eq_3.28}, we obtain
		\begin{align}\label{eq_3.33}
			\bigl(\ve_{\tilde{\vu}}^{m+1},\nab(q^{m+1}-q^m)\bigr) &= k\bigl(\nab e_r^{m+1},\nab(q^{m+1}-q^m)\bigr) \\\nonumber
			&\qquad- k\bigl(\nab q^{m+1},\nab(q^{m+1}-q^m)\bigr).
		\end{align}
		
		Substituting \eqref{eq_3.32} and \eqref{eq_3.33} into \eqref{eq_3.27} and  rearranging  terms, we get 
		\begin{align}
			\label{eq_3.34}&\frac12 \bigl(\|\ve_{\tilde{\vu}}^{m+1}\|_{L^2}^2 - \|\ve_{\tilde{\vu}}^m\|^2\bigr) + \frac12 \|\ve_{\tilde{\vu}}^{m+1} - \ve_{\tilde{\vu}}^m\|^2 + k\|\nab \ve_{\tilde{\vu}}^{m+1}\|^2 + k^2\|\nab e_r^{m+1}\|^2 \\\nonumber
			&\qquad= k\bigl(\ve_{\tilde{\vu}}^{m+1}-\ve_{\tilde{\vu}}^m,\nab e_r^{m+1}\bigr)  + k^2\bigl(\nab q^{m+1},\nab e_r^m\bigr)\\\nonumber
			& \qquad\qquad+ k^2\bigl(\nab q^{m+1},\nab(q^{m+1}-q^m)\bigr) + \bigl((\pmb{\eta}_{\vv}^m - \pmb{\eta}_{\tilde{\vu}}^m)\Delta W_{m+1},\ve_{\tilde{\vu}}^{m+1}\bigr).
		\end{align}	
		
		Finally, we bound each term on the right-hand side of \eqref{eq_3.34}. By Young's inequality, for $\delta_1,\delta_2,\delta_3 > 0$ we obtain 
		\begin{subequations}\label{eq_3.78x}
			\begin{align}\label{eq_3.78}	
				k\bigl(\ve_{\tilde{\vu}}^{m+1}-\ve_{\tilde{\vu}}^m,\nab e_r^{m+1}\bigr) &\leq \frac{1}{4\delta_1}\|\ve_{\tilde{\vu}}^{m+1}-\ve_{\tilde{\vu}}^m\|^2 + \delta_1 k^2\|\nab e_r^{m+1}\|^2.\\
				k^2\bigl(\nab q^{m+1},\nab e_r^m\bigr) &\leq \frac{1}{4\delta_2}k^2\|\nab q^{m+1}\|^2 + \delta_2k^2\|\nab e_r^m\|^2.\\
				k^2\bigl(\nab q^{m+1},\nab(q^{m+1}-q^m)\bigr) &\leq k^2\|\nab q^{m+1}\|^2 + \frac{1}{4}k^2\|\nab(q^{m+1}-q^m)\|^2.
			\end{align}	
		\end{subequations}
		Rewriting  
		\begin{align}\label{eq_333}
			\bigl((\pmb{\eta}_{\vv}^m - \pmb{\eta}_{\tilde{\vu}}^m)\Delta W_{m+1},\ve_{\tilde{\vu}}^{m+1}\bigr) &= \bigl((\pmb{\eta}_{\vv}^m - \pmb{\eta}_{\tilde{\vu}}^m)\Delta_{m+1} W,\ve_{\tilde{\vu}}^{m+1} - \ve_{\tilde{\vu}}^m\bigr) \\\nonumber
			&\qquad\qquad+ \bigl((\pmb{\eta}_{\vv}^m - \pmb{\eta}_{\tilde{\vu}}^m)\Delta W_{m+1},\ve_{\tilde{\vu}}^{m}\bigr).
		\end{align}
		Since the expectation of the second term on the right-hand side of \eqref{eq_333} vanishes due to the martingale property of It\^o's integral, we only need to estimate the first term. Again, rewriting 
		\begin{align}
			&\bigl( (\pmb{\eta}_{\vv}^m - \pmb{\eta}_{\tilde{\vu}}^m)\Delta_{m+1} W,\ve_{\tilde{\vu}}^{m+1} - \ve_{\tilde{\vu}}^m\bigr) 
			= - \bigl(\nab(\xi_{\vv}^m - \xi_{\tilde{\vu}}^m)\Delta_{m+1} W,\ve_{\tilde{\vu}}^{m+1} - \ve_{\tilde{\vu}}^m\bigr) \\
			&\hskip 1.8 in 
			+\bigl((\vB(\vv^m) - \vB(\tilde{\vu}^m))\Delta W_{m+1},\ve_{\tilde{\vu}}^{m+1} - \ve_{\tilde{\vu}}^m\bigr) . \nonumber
		\end{align}
		Then we have
		\begin{subequations}\label{eq_3.38x}
			\begin{align}
				\mE\big[\bigl(\nab(\xi_{\vv}^m &- \xi_{\tilde{\vu}}^m)\Delta W_{m+1},\ve_{\tilde{\vu}}^{m+1} - \ve_{\tilde{\vu}}^m\bigr)\big] \\\nonumber
				&\leq 2k\mE\big[\|\nab(\xi_{\vv}^m - \xi_{\tilde{\vu}}^m)\|^2\big] + \frac18\mE\big[\|\ve_{\tilde{\vu}}^{m+1} - \ve_{\tilde{\vu}}^m\|^2\big]\\\nonumber
				&\leq 2k\mE\big[\|(\vB(\vv^m) - \vB(\tilde{\vu}^m))\|^2\big] + \frac18\mE\big[\|\ve_{\tilde{\vu}}^{m+1} - \ve_{\tilde{\vu}}^m\|^2\big]\\\nonumber
				&\leq Ck\mE\big[\|\ve_{\tilde{\vu}}^m\|^2\big] + \frac18\mE\big[\|\ve_{\tilde{\vu}}^{m+1} - \ve_{\tilde{\vu}}^m\|^2\big],\\
				\label{eq_3.38}	\mE\big[\bigl((\vB(\vv^m)&-\vB(\tilde{\vu}^m))\Delta W_{m+1},\ve_{\tilde{\vu}}^{m+1}\bigr)\big] \\\nonumber
				&\leq \frac{1}{4\delta_3}k\mE\big[\|\bigl(\vB(\vv^m) - \vB(\tilde{\vu}^m)\bigr)\|_{L^2}^2\big] + \delta_3\mE\big[\|\ve_{\tilde{\vu}}^{m+1}-\ve_{\tilde{\vu}}^m\|^2\big]\\\nonumber 
				&\leq \frac{Ck}{4\delta_3}\mE\big[\|\ve_{\tilde{\vu}}^{m}\|^2\big]+ \delta_3\mE\big[\|\ve_{\tilde{\vu}}^{m+1}-\ve_{\tilde{\vu}}^m\|^2\big].
			\end{align}
		\end{subequations}
		Now, substituting \eqref{eq_3.78x} and \eqref{eq_3.38x} into \eqref{eq_3.34} and  taking  expectation on both sides, we obtain 
		\begin{align}\label{eq_3.39}
			&\frac12\mE \bigl[\|\ve_{\tilde{\vu}}^{m+1}\|^2 - \|\ve_{\tilde{\vu}}^m\|^2\bigr] + \bigg(\frac38 - \frac{1}{4\delta_1} - \delta_3\bigg)\mE\bigl[\|\ve_{\tilde{\vu}}^{m+1}-\ve_{\tilde{\vu}}^m\|^2 \\\nonumber
			&\qquad+ k\mE\bigl[\|\nab \ve_{\tilde{\vu}}^{m+1}\|^2\bigr]+ k^2\bigl(1-\delta_1-\delta_2\bigr)\mE\bigl[\|\nab e_r^{m+1}\|^2\bigr] \\\nonumber
			&\quad = \frac12 k^2\mE\bigl[\|\nab q^{m+1}\|^2\bigr] + \frac12 k^2 \mE\bigl[\|\nab(q^{m+1}-q^m)\|^2\bigr] 
			+ \frac{1}{4\delta_3}Ck\mE\bigl[\|\ve_{\tilde{\vu}}^m\|^2\bigr].
		\end{align}	
		Choosing $\delta_1 = \frac{17}{20}, \delta_2 = \frac{1}{10}, \delta_3 = \frac{1}{16}$,  taking the summation for $0\leq m\leq \ell\leq M-1$, and using \eqref{eq_3.10} and the discrete Gronwall's inequality, we get
		\begin{align}\label{eq_3.40}	
			\frac12\mE\bigl[\|\ve_{\tilde{\vu}}^{\ell+1}\|^2\bigr] &+ \frac{5}{272}\mE\bigg[\sum_{m=0}^{\ell}\|\ve_{\tilde{\vu}}^{m+1}-\ve_{\tilde{\vu}}^m\|^2\bigg] + \mE\bigg[k\sum_{m=0}^{\ell}\|\nab \ve_{\tilde{\vu}}^{m+1}\|^2\bigg]\\\nonumber
			&+ \frac{1}{20}\mE\bigg[k^2\sum_{m=0}^{\ell}\|\nab e_r^{m+1}\|^2\bigg] \leq \exp\bigl( 4C T\bigr)C\,k.
		\end{align}
		which and \eqref{eq_err3.14} infer the desired estimate. The proof is complete. 
	\end{proof}
	
	An immediate corollary of the above error estimate is the following stronger 
	stability estimates for $\{(\tilde{\vu}^m,r^m)\}$, which may not be obtainable directly 
	and will play an important 
	role in the error analysis of fully discrete counterpart of Algorithm 2
	in the next section. 
	
	\begin{corollary} 
		There exists $C>0$ which depends on $D_T,\vu_0$ and $\vf$ such that
		\begin{subequations}\label{eq_3.87x}
			\begin{align}\label{eq_3.87}
				\max_{0\leq m \leq M} \mE[\|\nab\tilde{\vu}^m\|^2] + \mE\bigg[k\sum_{m=0}^{M}\|\Delta \tilde{\vu}^m\|^2\bigg] &\leq C,\\
				\mE\bigg[k\sum_{m=0}^{M}\|\nab r^m\|^2\bigg] &\leq C. \label{eq_3.87b}
			\end{align}	
		\end{subequations}
	\end{corollary}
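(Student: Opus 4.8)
The plan is to extract \eqref{eq_3.87b} directly from the error analysis behind Theorem \ref{thm3.8} and then feed it into an $H^1$-energy estimate that yields both halves of \eqref{eq_3.87}. For \eqref{eq_3.87b} I would reuse the splitting $r^m = q^m - e_r^m$ from the proof of Theorem \ref{thm3.8}, so that
\begin{align*}
\mE\Bigl[k\sum_{m=0}^M\|\nab r^m\|^2\Bigr]\le 2\,\mE\Bigl[k\sum_{m=0}^M\|\nab q^m\|^2\Bigr]+2\,\mE\Bigl[k\sum_{m=0}^M\|\nab e_r^m\|^2\Bigr].
\end{align*}
The first term is $\le C$ by the Euler--Maruyama stability \eqref{eq_3.10}, and the second is $\le C$ because \eqref{eq_3.40} gives $\mE[k^2\sum_m\|\nab e_r^{m+1}\|^2]\le Ck$, hence $\mE[k\sum_m\|\nab e_r^{m+1}\|^2]\le C$. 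This is exactly where the divergence-free noise pays off through \eqref{eq_3.10}, the analogous bound \eqref{eq3.5b} for the standard scheme being only $O(k^{-1})$. The gradient bound in \eqref{eq_3.87} is likewise immediate by comparison: writing $\tilde{\vu}^m = \vv^m - \ve_{\tilde{\vu}}^m$ and using $\max_m a_m\le\sum_m a_m$ for the nonnegative numbers $a_m=\mE[\|\nab\ve_{\tilde{\vu}}^m\|^2]$,
\begin{align*}
\max_{0\le m\le M}\mE[\|\nab\tilde{\vu}^m\|^2]\le 2\max_{0\le m\le M}\mE[\|\nab\vv^m\|^2]+2\,\mE\Bigl[\sum_{m=0}^M\|\nab\ve_{\tilde{\vu}}^m\|^2\Bigr],
\end{align*}
whose right-hand side is bounded by \eqref{eq_3.9} and \eqref{eq_3.40}.

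The main work is the Laplacian bound, for which I would repeat the argument that produced \eqref{eq3.56b} in Lemma \ref{lemma3.1}, now with $p^n$ replaced by $r^n$. Testing the momentum identity
\begin{align*}
\tilde{\vu}^{m+1}-\tilde{\vu}^m-k\Delta\tilde{\vu}^{m+1}+k\nab r^m = k\vf^{m+1}+\pmb{\eta}_{\tilde{\vu}}^m\Delta W_{m+1}
\end{align*}
with $-\Delta\tilde{\vu}^{m+1}$ and applying $2a(a-b)=a^2-b^2+(a-b)^2$ produces, on the left, $\tfrac12[\|\nab\tilde{\vu}^{m+1}\|^2-\|\nab\tilde{\vu}^m\|^2+\|\nab(\tilde{\vu}^{m+1}-\tilde{\vu}^m)\|^2]+k\|\Delta\tilde{\vu}^{m+1}\|^2$, and on the right the terms $k(\nab r^m,\Delta\tilde{\vu}^{m+1})$, $-k(\vf^{m+1},\Delta\tilde{\vu}^{m+1})$ and $(\nab\pmb{\eta}_{\tilde{\vu}}^m\Delta W_{m+1},\nab\tilde{\vu}^{m+1})$. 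The two deterministic terms are controlled by Young's inequality, absorbing $\tfrac{k}{4}\|\Delta\tilde{\vu}^{m+1}\|^2$ on the left and leaving $Ck\|\nab r^m\|^2+Ck\|\vf^{m+1}\|^2$. After taking expectation and summing $\sum_{n=0}^m$, the pressure contribution becomes $Ck\sum_n\mE[\|\nab r^n\|^2]\le C$ by \eqref{eq_3.87b}, which is precisely the decisive gain over Lemma \ref{lemma3.1}, where this same step only yielded $O(k^{-1})$. A discrete Gronwall argument, together with $\mE[k\sum_n\|\nab\tilde{\vu}^n\|^2]\le C$ from Lemma \ref{lemma3.2}, then delivers both $\max_m\mE[\|\nab\tilde{\vu}^m\|^2]\le C$ and $\mE[k\sum_m\|\Delta\tilde{\vu}^m\|^2]\le C$.

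I expect the only genuinely delicate point to be the stochastic term $(\nab\pmb{\eta}_{\tilde{\vu}}^m\Delta W_{m+1},\nab\tilde{\vu}^{m+1})$. One must integrate by parts so as to place the second derivative on the divergence-free noise rather than on $\tilde{\vu}^{m+1}$, which is legitimate because $\pmb{\eta}_{\tilde{\vu}}^m = {\bf P}_{\mathbb H}\vB(\tilde{\vu}^m)$ inherits $H^1$-regularity with $\|\nab\pmb{\eta}_{\tilde{\vu}}^m\|\le C\|\nab\vB(\tilde{\vu}^m)\|\le C(1+\|\nab\tilde{\vu}^m\|)$. Splitting $\nab\tilde{\vu}^{m+1}=\nab\tilde{\vu}^m+\nab(\tilde{\vu}^{m+1}-\tilde{\vu}^m)$, the first piece has vanishing expectation by the martingale property of the It\^o increment, while the increment piece is bounded by $\tfrac12\|\nab(\tilde{\vu}^{m+1}-\tilde{\vu}^m)\|^2$ (absorbed on the left) plus $Ck(1+\mE[\|\nab\tilde{\vu}^m\|^2])$, which sums to $O(1)$ by Lemma \ref{lemma3.2}. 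Bounding this term \emph{without} integrating by parts would instead create a non-absorbable $\|\Delta(\tilde{\vu}^{m+1}-\tilde{\vu}^m)\|^2$ whose sum is only $O(k^{-1})$, so the integration by parts, hence the $H^1$-regularity of the noise, is the crux of the estimate.
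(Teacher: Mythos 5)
Your proposal is correct and follows essentially the same route as the paper: \eqref{eq_3.87b} is obtained by the triangle inequality from the auxiliary Euler--Maruyama stability \eqref{eq_3.10} together with the bound on $\mE[k\sum_m\|\nabla e_r^{m+1}\|^2]$ contained in \eqref{eq_3.40}, and \eqref{eq_3.87} is obtained by testing the momentum equation with $-\Delta\tilde{\vu}^{m+1}$, absorbing the pressure term via the now-summable $k\|\nabla r^m\|^2$, integrating the noise term by parts (using periodicity), and applying the discrete Gronwall inequality. Your extra details on the stochastic term and the alternative comparison argument for $\max_m\mE[\|\nabla\tilde{\vu}^m\|^2]$ are consistent with, and merely elaborate on, the paper's terse proof.
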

	
	\begin{proof}
		The estimate \eqref{eq_3.87b} follows immediately from \eqref{eq_3.40} and the triangular inequality. To prove \eqref{eq_3.87}, testing \eqref{eq3.4a} by $-\Delta\tilde{\vu}^{m+1}$, we obtain
		\begin{align}\label{eq3.42}
			\frac12\mE[\|\nab\tilde{\vu}^{m+1}\|^2 - \|\nab\tilde{\vu}^m\|^2] &+ \frac14\mE[\|\nab(\tilde{\vu}^{m+1}-\tilde{\vu}^m)\|^2] + \frac14 k\mE[\|\Delta\tilde{\vu}^{m+1}\|^2] \\\nonumber
			&\leq k\mE[\|\nab r^{m}\|^2] + k\mE\big[\|\vf^{m+1}\|^2\big] + k\mE[\|\nab\pmb{\eta}_{\tilde{\vu}}^m\|^2].
		\end{align} 
		Here we have used the periodic boundary condition for $\pmb{\eta}_{\tilde{\vu}}^m$ to kill the boundary term which arises from integration by parts. 
		
		Finally, applying the operator $\sum_{m=0}^{\ell}$ to \eqref{eq3.42} and  
		using the discrete Gronwall inequality and \eqref{eq_3.87}$_3$, we obtain the desired estimate. The proof is complete.
	\end{proof}
	
	Similarly, the following estimate holds for $\{\vu^m\}$. 
	\begin{corollary}
		There exists $C>0$ which depends on $D_T,\mathbf{u}_0$ and $\mathbf{f}$
		such that  
		\begin{align}\label{eq_3.43}
			\max_{0\leq m \leq M} \big(\mE[\|\vu(t_m) - \vu^m \|^2]\big)^{\frac12} + \bigg(\mE\bigg[k\sum_{m=0}^M\| \vu(t_m) - \vu^m\|^2\bigg]\bigg)^{\frac12} \leq C\sqrt{k}.
		\end{align}
	\end{corollary}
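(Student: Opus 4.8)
The plan is to exploit the fact that Step~3 and Step~4 of Algorithm~2 realize the (exact, in space) Helmholtz projection of $\tilde{\vu}^m$ onto the divergence-free fields, so that passing from $\tilde{\vu}^m$ to $\vu^m$ can only decrease the $L^2$ velocity error; the estimate then follows directly from Theorem~\ref{thm3.8} without any new energy argument. First I would record two structural facts. Since the exact velocity satisfies $\vu(t)\in\bV$ we have $\div\vu(t_m)=0$; and from \eqref{eq_3.3a} and \eqref{eq_step3a} we have $\vu^m=\tilde{\vu}^m-k\nab r^m$ with $k\Delta r^m=\div\tilde{\vu}^m$, whence $\div\vu^m=\div\tilde{\vu}^m-k\Delta r^m=0$. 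Thus $\vu^m={\bf P}_{\mathbb H}\tilde{\vu}^m$ is exactly the Helmholtz projection of $\tilde{\vu}^m$, with curl-free part $k\nab r^m$.

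Next I would write the splitting $\vu(t_m)-\tilde{\vu}^m=\bigl(\vu(t_m)-\vu^m\bigr)-k\nab r^m$ and observe that it is an $L^2$-orthogonal decomposition: the first summand is divergence-free while the second is a gradient, and since all functions are periodic, integration by parts gives $\bigl(\vu(t_m)-\vu^m,\nab r^m\bigr)=-\bigl(\div(\vu(t_m)-\vu^m),r^m\bigr)=0$, the boundary contribution vanishing by periodicity. The Pythagorean identity then yields
\begin{align*}
\|\vu(t_m)-\tilde{\vu}^m\|^2=\|\vu(t_m)-\vu^m\|^2+k^2\|\nab r^m\|^2\ge \|\vu(t_m)-\vu^m\|^2,
\end{align*}
so that $\|\vu(t_m)-\vu^m\|\le\|\vu(t_m)-\tilde{\vu}^m\|$ holds pointwise in $\omega$ and in $m$.

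Finally I would transfer the two norms in the statement from $\tilde{\vu}^m$ to $\vu^m$. Taking expectations in the pointwise bound and maximizing over $m$ controls the first term by $\max_{0\le m\le M}\bigl(\mE[\|\vu(t_m)-\tilde{\vu}^m\|^2]\bigr)^{1/2}\le C\sqrt{k}$, which is exactly the velocity part of \eqref{eq_3.19}. For the time-averaged term no gradient estimate is needed: summing the same pointwise bound gives $\mE\bigl[k\sum_{m=0}^M\|\vu(t_m)-\vu^m\|^2\bigr]\le k(M+1)\max_{0\le m\le M}\mE[\|\vu(t_m)-\tilde{\vu}^m\|^2]\le C(T+k)\,k\le Ck$, using $k(M+1)\le T+k$ and Theorem~\ref{thm3.8} once more. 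Taking square roots and adding the two bounds completes the proof.

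There is no serious analytic obstacle here; the proof is essentially an observation, and the only point requiring care is the recognition that $\vu^m={\bf P}_{\mathbb H}\tilde{\vu}^m$ is a genuine orthogonal projection, so that it is non-expansive in $L^2$. That orthogonality, and hence the error-reducing property of the projection, relies on the periodic boundary conditions, which is consistent with the standing assumption of the paper; the curl-free remainder $k^2\|\nab r^m\|^2$ discarded in the Pythagorean step is precisely the quantity already shown to be small in \eqref{eq_3.87b}, so nothing is lost.
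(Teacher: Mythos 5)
Your proof is correct. The paper itself disposes of this corollary in one sentence, invoking \eqref{eq_step3a}, Theorem \ref{thm3.8}, and the stability estimate \eqref{eq_3.87b}; the implied argument is the triangle inequality $\|\vu(t_m)-\vu^m\|\le\|\vu(t_m)-\tilde{\vu}^m\|+k\|\nab r^m\|$, with the second term controlled by $k^2\,\mE[\|\nab r^m\|^2]\le Ck$ coming from \eqref{eq_3.87b}. You take a slightly different and cleaner route: you observe that $\vu^m={\bf P}_{\mathbb H}\tilde{\vu}^m$ and that $\vu(t_m)\in{\mathbb H}$, so the decomposition $\vu(t_m)-\tilde{\vu}^m=(\vu(t_m)-\vu^m)-k\nab r^m$ is $L^2$-orthogonal and the projection is non-expansive, giving $\|\vu(t_m)-\vu^m\|\le\|\vu(t_m)-\tilde{\vu}^m\|$ pointwise. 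This removes any need for \eqref{eq_3.87b} and makes the estimate follow from Theorem \ref{thm3.8} alone; the price is that you must justify the orthogonality (which you do correctly, via periodicity and $\div\vu^m=\div\tilde{\vu}^m-k\Delta r^m=0$ from \eqref{eq_3.3a}). Both arguments are short and both yield the stated $O(\sqrt{k})$ bound; yours is marginally more self-contained, while the paper's is the more routine "perturbation plus stability" pattern used elsewhere in the section.
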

	
	{The proof of \eqref{eq_3.43} readily follows from  \eqref{eq_step3a} and Theorem \eqref{thm3.8} as well as the estimate \eqref{eq_3.87b}.
	}
	
	Next, we derive error estimates for the pressure approximations $r^m$ 
	and $p^m$ generated by Algorithm 2. First, we state the following lemma.
	
	\begin{lemma}\label{lem3.4} 
		Let $\{r^m\}_{m=0}^M$ be generated by Algorithm 2. Then, there exists a constant $C>0$
		depending on $D_T, \vu_0, \vf$ and $\beta$ such that for $0\leq \ell\leq M$
		\begin{align}\label{eq_3.42}
			\bigg(\mE\bigg[\bigg\|k\sum_{m=1}^\ell (r^{m} -r^{m-1})\bigg\|^2\bigg]\bigg)^{\frac12} \leq C\sqrt{k}.
		\end{align}
		
	\end{lemma}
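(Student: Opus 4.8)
The plan is to exploit the fact that the sum telescopes. Since $\tilde{\vu}^0=\vu_0$ is divergence-free, Step 3 of Algorithm 2 applied with $\tilde{\vu}^0$ yields $-\Delta r^0 = -k^{-1}\div\vu_0 = 0$, so that $r^0=0$ in $H^1_{per}(D)/\mathbb{R}$ and
\[
k\sum_{m=1}^{\ell}(r^m-r^{m-1}) = k\,r^\ell .
\]
It therefore suffices to establish the pointwise-in-$\ell$ estimate $\max_{0\le \ell\le M}\mE[\|k\,r^\ell\|^2]\le Ck$, i.e. $\max_\ell \mE[\|r^\ell\|^2]\le C/k$.

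To bound $\|r^\ell\|$ I would combine the inf-sup condition \eqref{inf-sup} with the momentum equation of Algorithm 2,
\[
\tilde{\vu}^{\ell+1}-\tilde{\vu}^\ell - k\Delta\tilde{\vu}^{\ell+1} + k\nab r^\ell = k\vf^{\ell+1} + \pmb{\eta}^\ell_{\tilde{\vu}}\Delta W_{\ell+1},
\]
which follows by inserting Step 4 into Step 2. Testing this identity against an arbitrary $\vv\in\vH^1_{per}(D)$ and integrating by parts gives
\[
k\bigl(r^\ell,\div\vv\bigr) = \bigl(\tilde{\vu}^{\ell+1}-\tilde{\vu}^\ell,\vv\bigr) + k\bigl(\nab\tilde{\vu}^{\ell+1},\nab\vv\bigr) - k\bigl(\vf^{\ell+1},\vv\bigr) - \bigl(\pmb{\eta}^\ell_{\tilde{\vu}}\Delta W_{\ell+1},\vv\bigr).
\]
After the Cauchy--Schwarz and Poincar\'e inequalities and division by $\|\nab\vv\|$, the inf-sup condition \eqref{inf-sup} yields
\[
k\beta\|r^\ell\| \le C\|\tilde{\vu}^{\ell+1}-\tilde{\vu}^\ell\| + k\|\nab\tilde{\vu}^{\ell+1}\| + Ck\|\vf^{\ell+1}\| + C\,|\Delta W_{\ell+1}|\,\|\pmb{\eta}^\ell_{\tilde{\vu}}\|.
\]
Squaring and taking expectation, the diffusion term is $O(k^2)$ by the maximal bound \eqref{eq_3.87}, the forcing term is $O(k^2)$, and the noise term is $O(k)$ since $\Delta W_{\ell+1}$ is independent of $\cF_{t_\ell}$ with variance $k$ while $\|\pmb{\eta}^\ell_{\tilde{\vu}}\|\le C\|\tilde{\vu}^\ell\|$ by \eqref{eq_3.6} and Lemma \ref{lemma3.2}.

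The crux is the first term, for which I need the pointwise bound $\mE[\|\tilde{\vu}^{\ell+1}-\tilde{\vu}^\ell\|^2]\le Ck$; this is \emph{not} delivered by the summed stability estimates of Lemma \ref{lemma3.2} alone, which only control the sum of the increments by an $O(1)$ quantity. I would instead route through the auxiliary Euler--Maruyama solution $\{\vv^m\}$ by writing $\tilde{\vu}^{\ell+1}-\tilde{\vu}^\ell = (\vv^{\ell+1}-\vv^\ell) - (\ve_{\tilde{\vu}}^{\ell+1}-\ve_{\tilde{\vu}}^\ell)$ with $\ve_{\tilde{\vu}}^m=\vv^m-\tilde{\vu}^m$. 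The velocity difference $\vv^{\ell+1}-\vv^\ell$ is split through the exact solution as $(\vv^{\ell+1}-\vu(t_{\ell+1})) + (\vu(t_{\ell+1})-\vu(t_\ell)) + (\vu(t_\ell)-\vv^\ell)$ and bounded by $Ck$ using the Euler--Maruyama error estimate \eqref{eq_err3.14} and the H\"older continuity \eqref{eq2.20a}. The error difference is estimated crudely by the entire sum, $\mE[\|\ve_{\tilde{\vu}}^{\ell+1}-\ve_{\tilde{\vu}}^\ell\|^2]\le \mE\bigl[\sum_{m}\|\ve_{\tilde{\vu}}^{m+1}-\ve_{\tilde{\vu}}^m\|^2\bigr]$, which \eqref{eq_3.40} shows to be $O(k)$.

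Putting these together gives $k^2\beta^2\,\mE[\|r^\ell\|^2]\le Ck$ uniformly in $\ell$, hence $\bigl(\mE[\|k\,r^\ell\|^2]\bigr)^{1/2}\le C\sqrt{k}$, which is the claim. The main obstacle is precisely the pointwise velocity-increment bound $\mE[\|\tilde{\vu}^{\ell+1}-\tilde{\vu}^\ell\|^2]\le Ck$: the gain of the extra factor $k$ over the raw stability bound rests on the observation that the corresponding increment sum in \eqref{eq_3.40} carries a factor $k$, which in turn reflects the divergence-free character of the Helmholtz-projected noise $\pmb{\eta}^m_{\tilde{\vu}}$ and the resulting sharper pressure estimate \eqref{eq_3.10} for the auxiliary scheme.
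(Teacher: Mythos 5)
Your proof is correct in substance but takes a genuinely different route from the paper's. The paper never touches the momentum equation: it tests the projection step \eqref{eq_step3a}, $\vu^{m+1}=\tilde\vu^{m+1}-k\nab r^{m+1}$, against $\vv$ to get $k(r^{m+1},\div\vv)=(\vu^{m+1}-\tilde\vu^{m+1},\vv)$, subtracts consecutive levels, telescopes, and is left with $k\bigl(\sum_{m=0}^{\ell}(r^{m+1}-r^m),\div\vv\bigr)=(\tilde\vu^{\ell+1}-\vu^{\ell+1},\vv)$ using $\vu^0=\tilde\vu^0$; the inf-sup condition \eqref{inf-sup} then reduces everything to the two pointwise-in-$\ell$ bounds $\mE[\|\vv^{\ell+1}-\tilde\vu^{\ell+1}\|^2]\le Ck$ and $\mE[\|\vv^{\ell+1}-\vu^{\ell+1}\|^2]\le Ck$ already available from \eqref{eq_3.40}, \eqref{eq_3.43} and \eqref{eq_err3.14}. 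You instead work with the momentum equation at level $\ell+1$, which forces you to control the extra terms $k\|\nab\tilde\vu^{\ell+1}\|$, $k\|\vf^{\ell+1}\|$, the noise term, and above all the pointwise increment $\mE[\|\tilde\vu^{\ell+1}-\tilde\vu^\ell\|^2]\le Ck$; your detour through the Euler--Maruyama solution to obtain that increment bound is valid and rests on the same ingredients (\eqref{eq_3.40}, \eqref{eq_err3.14}, \eqref{eq2.20a}), so the two arguments have comparable depth, the paper's being shorter because the projection step already encodes $k\nab r^{m+1}$ as a velocity difference. Your reduction via $r^0=0$ and the telescoping to $kr^\ell$ is the same normalization the paper uses implicitly.

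One loose end: the momentum equation at level $\ell+1$ carries $r^\ell$ (the time lag), so your argument delivers $\mE[\|r^\ell\|^2]\le C/k$ only for $0\le\ell\le M-1$, while the lemma's sum at $\ell=M$ telescopes to $kr^M$. To cover the endpoint, invoke Step 4 at level $M$ directly: $k(r^M,\div\vv)=(\tilde\vu^M-\vu^M,\vv)\le C(\|\vv^M-\tilde\vu^M\|+\|\vv^M-\vu^M\|)\|\nab\vv\|$, which is $O(\sqrt{k})$ in $L^2(\Omega)$ by the same error estimates --- this is exactly the paper's identity. With that one-line addition your proof is complete.
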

	\begin{proof} 
		The idea of the proof is to utilize the inf-sup condition \eqref{inf-sup}. 
		Testing \eqref{eq_step3a} by any $\vv \in L^2(\Ome;\vH^1_{per}(D))$, we obtain
		\begin{align*}
			k\bigl(r^{m+1},\div \vv\bigr) &= \bigl(\vu^{m+1} - \tilde{\vu}^{m+1},\vv\bigr),\\\nonumber
			k\bigl(r^{m},\div \vv\bigr) &= \bigl(\vu^{m} - \tilde{\vu}^{m},\vv\bigr).
		\end{align*}
		Then, subtracting the above equations yields 
		\begin{align}\label{eq_345}
			k\bigl(r^{m+1}-r^m,\div \vv\bigr) = \bigl((\vu^{m+1} -\vu^m) - (\tilde{\vu}^{m+1} - \tilde{\vu}^m),\vv\bigr).
		\end{align}
		Applying the summation operator $\sum_{m=0}^{\ell}$ for $0 \leq \ell\leq M-1$ to \eqref{eq_345}, we get
		\begin{align}\label{eq_346}
			\bigg(k\sum_{m=0}^{\ell}(r^{m+1}-r^m),\div \vv\bigg) &= \bigl((\vu^{\ell+1} -\vu^0) - (\tilde{\vu}^{\ell+1} - \tilde{\vu}^0),\vv\bigr) \\\nonumber
			&= \bigl(\ve_{\vu}^{\ell+1} - \ve_{\tilde{\vu}}^{\ell+1},\vv\bigr)\\\nonumber
			&\leq C\bigl(\|\ve_{\vu}^{\ell+1}\| + \|\ve_{\tilde{\vu}}^{\ell+1}\|\bigr)\|\nab\vv\|,
		\end{align}
		where $\ve_{\vu}^{m}$ and $\ve_{\tilde{\vu}}^m$ are the same as defined in the preceding subsection and we have used the fact that $\vu^0 - \tilde{\vu}^0 = 0$.
		
		Finally, by using the inf-sup condition \eqref{inf-sup} and then taking the expectation we get 
		\begin{align*}
			\beta^2\mE\bigg[\Bigl\|k\sum_{m=0}^{\ell}(r^{m+1}-r^m)\Bigr\|^2 \bigg]  
			&\leq C\Bigl( \mE\bigl[\|\ve_{\vu}^{\ell+1}\|^2\bigr] + \mE\bigl[\|\ve_{\tilde{\vu}}^{\ell+1}\|^2\bigr] \Bigr),
		\end{align*}
		which and the estimates for $\ve_{\vu}^{\ell+1}$ and $\ve_{\tilde{\vu}}^{\ell+1}$ infer the  desired estimate \eqref{eq_3.42}. The proof is complete. 	
	\end{proof}
	
	We then are ready to state the following error estimate result for $r^m$. 
	
	\begin{theorem}\label{thm3.6}
		Let $\{r^m\}_{m=0}^M$ be generated by Algorithm 2 and $R(t)$ be defined in \eqref{R_t}.
		Then there exists a constant $C>0$ depending on $D_T,\vu_0, \vf$ and $\beta$ such hat
		for $0\leq \ell\leq M$
		\begin{align}\label{eq_344}
			\bigg(\mE\bigg[ \bigg\|R(t_\ell) - k\sum_{m=0}^\ell r^m \bigg\|^2\bigg]\bigg)^{\frac12} \leq C\sqrt{k}.
		\end{align}
	\end{theorem}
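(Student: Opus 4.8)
The plan is to bridge $R(t_\ell)$ and $k\sum_{m=0}^\ell r^m$ through the auxiliary Euler--Maruyama pressure $q^m$ from \eqref{eq_318}, and then to control the residual discrepancy with the inf-sup condition \eqref{inf-sup}. First I would use the triangle inequality
\[
\Bigl\|R(t_\ell) - k\sum_{m=0}^\ell r^m\Bigr\| \le \Bigl\|R(t_\ell) - k\sum_{m=0}^\ell q^m\Bigr\| + \Bigl\|k\sum_{m=0}^\ell (q^m - r^m)\Bigr\|,
\]
and take $L^2(\Omega)$-norms (Minkowski). The first term on the right is already $O(\sqrt k)$ by the Euler--Maruyama estimate \eqref{eq_err3.15}. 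Writing $e_r^m = q^m - r^m$ and $\ve_{\tilde{\vu}}^m = \vv^m - \tilde{\vu}^m$ as in the proof of Theorem \ref{thm3.8}, it remains to prove $\bigl(\mE\bigl[\|k\sum_{m=0}^\ell e_r^m\|^2\bigr]\bigr)^{1/2} \le C\sqrt k$.

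The crucial observation is that the pressure terms in the error momentum equation \eqref{eq_3.23} can be regrouped. Since $e_r^m = q^m-r^m$ and $e_r^{m+1}=q^{m+1}-r^{m+1}$, one checks the identity
\[
k\nabla e_r^m + k\nabla(q^{m+1}-q^m) = k\nabla q^{m+1} - k\nabla r^m = k\nabla e_r^{m+1} + k\nabla(r^{m+1}-r^m),
\]
so \eqref{eq_3.23} may be recast with pressure contribution $k\nabla e_r^{m+1} + k\nabla(r^{m+1}-r^m)$ on the left. Summing this reformulation over $m=0,\dots,\ell-1$, using $\ve_{\tilde{\vu}}^0=0$ and telescoping, the $r$-difference collapses to the single term $k\nabla(r^\ell-r^0)$, which is exactly the quantity controlled by Lemma \ref{lem3.4}. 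This is the whole purpose of stating Lemma \ref{lem3.4} first: a naive summation of \eqref{eq_3.23} would instead telescope $k\nabla(q^{m+1}-q^m)$ into the single-time value $k\nabla q^{\ell}$, for which no pointwise-in-time bound is at hand, whereas the regrouping trades it for $k\nabla r^\ell$, for which Lemma \ref{lem3.4} supplies the needed $O(\sqrt k)$ bound. This step is the main obstacle; everything after it is routine.

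With $\tilde E_r^\ell := k\sum_{m=1}^\ell e_r^m$, the summed identity reads
\[
\ve_{\tilde{\vu}}^\ell - k\Delta\sum_{m=1}^\ell \ve_{\tilde{\vu}}^m + \nabla\tilde E_r^\ell + k\nabla(r^\ell - r^0) = \sum_{m=0}^{\ell-1}(\pmb{\eta}_{\vv}^m - \pmb{\eta}_{\tilde{\vu}}^m)\Delta W_{m+1}.
\]
I would test against an arbitrary $\vv\in\vH^1_{per}(D)$, integrate by parts (the periodic boundary kills all boundary terms), solve for $(\tilde E_r^\ell,\div\vv)$, and apply \eqref{inf-sup} together with Poincar\'e's inequality to get
\[
\beta\|\tilde E_r^\ell\| \le C\Bigl(\|\ve_{\tilde{\vu}}^\ell\| + k\sum_{m=1}^\ell\|\nabla\ve_{\tilde{\vu}}^m\| + \|k(r^\ell-r^0)\| + \Bigl\|\sum_{m=0}^{\ell-1}(\pmb{\eta}_{\vv}^m-\pmb{\eta}_{\tilde{\vu}}^m)\Delta W_{m+1}\Bigr\|\Bigr).
\]

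Finally I would square, take expectations, and estimate termwise. The first term is $O(\sqrt k)$ by Theorem \ref{thm3.8}; the second is $O(\sqrt k)$ by a discrete Cauchy--Schwarz inequality in time, $k\sum_{m=1}^\ell\|\nabla\ve_{\tilde{\vu}}^m\|\le T^{1/2}\bigl(k\sum_{m=1}^\ell\|\nabla\ve_{\tilde{\vu}}^m\|^2\bigr)^{1/2}$, and Theorem \ref{thm3.8}; the third is $O(\sqrt k)$ by Lemma \ref{lem3.4}; and the martingale term is handled by the It\^o isometry, the bound $\|\pmb{\eta}_{\vv}^m-\pmb{\eta}_{\tilde{\vu}}^m\|\le C\|\ve_{\tilde{\vu}}^m\|$ (boundedness of the Helmholtz projection and the Lipschitz property \eqref{eq2.6a}), and Theorem \ref{thm3.8}, namely
\[
\mE\Bigl[\Bigl\|\sum_{m=0}^{\ell-1}(\pmb{\eta}_{\vv}^m-\pmb{\eta}_{\tilde{\vu}}^m)\Delta W_{m+1}\Bigr\|^2\Bigr] = \sum_{m=0}^{\ell-1} k\,\mE\bigl[\|\pmb{\eta}_{\vv}^m-\pmb{\eta}_{\tilde{\vu}}^m\|^2\bigr] \le Ck.
\]
This yields $\mE[\|\tilde E_r^\ell\|^2]\le Ck$, and since $k\sum_{m=0}^\ell e_r^m$ differs from $\tilde E_r^\ell$ only by the harmless $O(k)$ endpoint term $k e_r^0$, combining with the first paragraph gives \eqref{eq_344}.
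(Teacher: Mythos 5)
Your proposal is correct and follows essentially the same route as the paper: both bridge through the Euler--Maruyama pressure $q^m$ via \eqref{eq_err3.15}, sum the error momentum equation, apply the inf-sup condition \eqref{inf-sup}, control the velocity terms with Theorem \ref{thm3.8}, the noise term with the It\^o isometry, and absorb the $q^{m+1}$-versus-$r^m$ index mismatch using Lemma \ref{lem3.4}. The only cosmetic difference is that you regroup the pressure terms inside the equation before summing, whereas the paper sums first to get $k\sum_{m=0}^{\ell}(q^{m+1}-r^m)$ and performs the same splitting at the end.
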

	
	\begin{proof} 
		Subtracting \eqref{eq_318} from \eqref{eq3.4a} and then testing the resulting equation by  $\vv \in L^2(\Ome;\vH^1_{per}(D))$, we obtain 
		\begin{align}\label{eq_3.44}
			\bigl(\ve_{\tilde{\vu}}^{m+1} - \ve_{\tilde{\vu}}^m,\vv\bigr) + k\bigl(\nab\ve_{\tilde{\vu}}^{m+1},\nab\vv\bigr) &- k\bigl(q^{m+1} - r^m,\div\vv\bigr) \\\nonumber
			&= \bigl(\big(\pmb{\eta}_{\vv}^m-\pmb{\eta}_{\tilde{\vu}}^m\big)\Delta W_{m+1},\vv\bigr).
		\end{align}	
		Applying the summation operator $\sum_{m=0}^{\ell}$ to \eqref{eq_3.44} for 
		$0 \leq \ell\leq M-1$ yields  
		\begin{align}\label{eq_3.445}
			\bigg(k\sum_{m=0}^{\ell}(q^{m+1}-r^m),\div\vv \bigg) &= \bigl(\ve_{\tilde{\vu}}^{\ell+1} - \ve_{\tilde{\vu}}^0,\vv\bigr) 
			+ \bigg(k\sum_{m=0}^{\ell}\nab\ve_{\tilde{\vu}}^{m+1},\nab\vv\bigg) \\\nonumber
			&\qquad -\bigg(\sum_{m=0}^{\ell}\big(\pmb{\eta}_{\vv}^m-\pmb{\eta}_{\tilde{\vu}}^m\big)\Delta W_{m+1},\vv\bigg)\\\nonumber
			&= {\tt I} + {\tt II} + {\tt III}.
		\end{align}
		
		Now, we bound each term on the right-hand side of \eqref{eq_3.445} as follows.
		By Schwarz and Poincar\'e inequalities, we get 
		\begin{align*}
			|{\tt I}| + |{\tt II}| &\leq C\|\ve_{\tilde{\vu}}^{\ell+1}\|\|\nab \vv\| + k\sum_{m=0}^{\ell}\|\nab\ve_{\tilde{\vu}}^{m+1}\|\cdot\|\nab\vv\|,\\
			|{\tt III}| &\leq \big\|\sum_{m=0}^{\ell}(\pmb{\eta}_{\vv}^m - \pmb{\eta}_{\tilde{\vu}}^m)\Delta W_{m+1}\big\|\cdot \|\nab\vv\| \\\nonumber
			&\leq \big\|\sum_{m=0}^{\ell}(\vB(\vv^m) - \vB(\tilde{\vu}^m))\Delta W_{m+1}\|\cdot \|\nab\vv\| \\\nonumber
			&\qquad + \big\|\sum_{m=0}^{\ell}(\nab(\xi_{\vv}^m - \xi_{\tilde{\vu}}^m))\Delta W_{m+1}\big\|\cdot\|\nab\vv\| \\\nonumber
			& = \bigl({\tt III_1 + III_2}\bigr)\,\|\nab\vv\|.
		\end{align*}
		By  It\^o isometry, the assumptions on $\vB$ and Theorem \ref{thm3.8}, we obtain 
		\begin{align*}
			\mE[{\tt III_1^2}] &= k\sum_{m=0}^{\ell}\mE\big[\|\vB(\vv^m) - \vB(\tilde{\vu}^m)\|^2\big] \leq Ck\sum_{m=0}^{\ell}\mE\big[\|\ve_{\tilde{\vu}}^m\|^2\big] \leq Ck. \\
			\mE[{\tt III_2^2}] &= k\sum_{m=0}^{\ell}\mE\big[\|\nab(\xi_{\vv}^m - \xi_{\tilde{\vu}}^m)\|^2\big].
		\end{align*}
		Moreover, using the Poisson equation as defined in Step 1 of Algorithm 1, we have
		$$\|\nab(\xi_{\vv}^m - \xi_{\tilde{\vu}}^m)\|\leq\|\vB(\vv^m) - \vB(\tilde{\vu}^m)\|.$$ Thus,
		\begin{align*}%\label{eq_3.53}
			\mE[{\tt III}_2^2] &\leq k\sum_{m=0}^{\ell}\mE[\|\vB(\vv^m) - \vB(\tilde{\vu}^m)\|^2] \leq k\sum_{m=0}^{\ell}\mE[\|\ve_{\tilde{\vu}}^m\|^2] \leq Ck.
		\end{align*}
		
		Finally, it follows from the inf-sup condition \eqref{inf-sup}, Theorem \ref{thm3.8},  \eqref{eq_3.445}, and all above inequalities that
		\begin{align}
			\bigg(\mE\bigg[\bigg\|k\sum_{m=0}^{\ell}(q^{m+1}-r^m)\bigg\|^2\bigg]\bigg)^{\frac12} \leq C\sqrt{k}.
		\end{align}
		The proof is completed by applying the triangular inequality, Lemma \ref{lem3.4} and \eqref{eq_err3.15}.
	\end{proof}
	
	\begin{corollary}\label{cor3.7}
		Let $\{p^m\}_{m=0}^M$ be generated by Algorithm 2.  Then, there exists a constant $C>0$
		which depends on  $D_T, \vu_0, \vf$ and $\beta$ such that for $0\leq \ell\leq M$
		\begin{align}
			\bigg(\mE\bigg[\bigg\|P(t_\ell) - k\sum_{m=0}^{\ell} p^m\bigg\|^2\bigg]\bigg)^{\frac12} \leq C\sqrt{k}.
		\end{align}
		%	where $C \equiv C(D_T, \vu_0,\vf, \beta) > 0$.
	\end{corollary}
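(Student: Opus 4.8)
The plan is to reduce the pressure error to the already-established error for $r^m$ from Theorem \ref{thm3.6} plus a stochastic-integral discretization error, by exploiting the recovery relation \eqref{eq_3.5}. First I would sum Step 5 of Algorithm 2, namely $p^{m+1} = r^{m+1} + k^{-1}\xi^m\Delta W_{m+1}$, and use $\Delta W_{m+1} = \int_{t_m}^{t_{m+1}} dW(s)$ together with the natural convention $p^0 = r^0$ to write
\begin{align*}
	k\sum_{m=0}^{\ell} p^m = k\sum_{m=0}^{\ell} r^m + \sum_{m=0}^{\ell-1}\int_{t_m}^{t_{m+1}}\xi^m\, dW(s).
\end{align*}
Combining this with the identity $P(t_\ell) = R(t_\ell) + \int_0^{t_\ell}\xi(s)\, dW(s)$ from \eqref{R_t}, the pressure error splits cleanly as
\begin{align*}
	P(t_\ell) - k\sum_{m=0}^{\ell} p^m = \Bigl(R(t_\ell) - k\sum_{m=0}^{\ell} r^m\Bigr) + \sum_{m=0}^{\ell-1}\int_{t_m}^{t_{m+1}}\bigl(\xi(s) - \xi^m\bigr)\, dW(s).
\end{align*}
Applying the triangle inequality in $L^2(\Ome)$, the first summand is bounded by $C\sqrt{k}$ directly from Theorem \ref{thm3.6}, so the entire task reduces to estimating the second, purely stochastic term.

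For that term I would invoke the It\^o isometry to obtain
\begin{align*}
	\mE\biggl[\Bigl\|\sum_{m=0}^{\ell-1}\int_{t_m}^{t_{m+1}}\bigl(\xi(s) - \xi^m\bigr)\, dW(s)\Bigr\|^2\biggr] = \sum_{m=0}^{\ell-1}\int_{t_m}^{t_{m+1}}\mE\bigl[\|\xi(s) - \xi^m\|^2\bigr]\, ds,
\end{align*}
which shifts the problem to bounding $\mE[\|\xi(s)-\xi^m\|^2]$. The key observation is that $\xi(s)$ and $\xi^m$ are the scalar potentials of the Helmholtz decompositions of $\vB(\vu(s))$ and $\vB(\tilde{\vu}^m)$, governed respectively by \eqref{eq2.8f} and \eqref{eq_3.1}; hence their difference solves the periodic Poisson problem $\bigl(\nab(\xi(s)-\xi^m),\nab\phi\bigr) = \bigl(\vB(\vu(s)) - \vB(\tilde{\vu}^m),\nab\phi\bigr)$ for all $\phi$. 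Testing with $\phi = \xi(s)-\xi^m$, using that both potentials have zero mean (so the Poincar\'e inequality applies), and invoking the Lipschitz bound \eqref{eq2.6a}, I would deduce $\|\xi(s)-\xi^m\| \leq C\|\nab(\xi(s)-\xi^m)\| \leq C\|\vB(\vu(s)) - \vB(\tilde{\vu}^m)\| \leq C\|\vu(s) - \tilde{\vu}^m\|$.

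To close, I would split $\|\vu(s)-\tilde{\vu}^m\| \leq \|\vu(s)-\vu(t_m)\| + \|\vu(t_m)-\tilde{\vu}^m\|$ and bound the temporal increment by the H\"older continuity \eqref{eq2.20a}, giving $\mE[\|\vu(s)-\vu(t_m)\|^2] \leq C|s-t_m| \leq Ck$, while the velocity error $\mE[\|\vu(t_m)-\tilde{\vu}^m\|^2] \leq Ck$ comes from Theorem \ref{thm3.8}. Thus $\mE[\|\xi(s)-\xi^m\|^2] \leq Ck$ uniformly in $s$, so the integral/sum contributes at most $Ck\,t_\ell \leq Ck$; taking square roots and recombining with the first summand yields the claimed $C\sqrt{k}$ rate. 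The main obstacle I anticipate is the potential-difference estimate: one must recognize that the $L^2$ norm of $\xi(s)-\xi^m$ (and not merely its gradient) is what enters the It\^o isometry, and correctly pass from the velocity error to this norm via the elliptic/Poincar\'e bound on the periodic Poisson problem --- after which the H\"older continuity \eqref{eq2.20a} and Theorem \ref{thm3.8} make the remaining estimates routine.
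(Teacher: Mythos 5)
Your proposal is correct and follows essentially the same route as the paper: decompose the error via \eqref{R_t} and Step 5 into the $R$-versus-$r$ error (handled by Theorem \ref{thm3.6}) plus a stochastic-integral term, then apply the It\^o isometry, the elliptic/Poincar\'e bound relating $\xi$-differences to $\vB$-differences, the Lipschitz condition \eqref{eq2.6a}, the H\"older continuity \eqref{eq2.20a}, and the velocity error estimate of Theorem \ref{thm3.8}. The only cosmetic difference is that the paper first splits the stochastic term into $\xi(s)-\xi(t_m)$ and $\xi(t_m)-\xi^m$ before estimating, whereas you bound $\xi(s)-\xi^m$ directly and split at the level of the velocity; the resulting estimates are identical.
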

	
	\begin{proof} 
		The proof is similar to that of Theorem \ref{thm3.6}. Indeed, we have
		\begin{align}\label{eq_3.56}
			\bigg\|P(t_m) - k\sum_{m=0}^{\ell} p^m\bigg\| &\leq \bigg\|R(t_m) - k\sum_{m=0}^{\ell} r^m\bigg\| \\\nonumber
			&\qquad+ \bigg\|\sum_{m=0}^{\ell}\int_{t_m}^{t_{m+1}}\xi(s)\, dW(s) - \sum_{m=0}^{\ell}\xi^{m}\Delta W_{m+1}\bigg\| \\\nonumber
			&= {\tt I} + {\tt II}.
		\end{align}
		
		Clearly, the first term ``${\tt I}$" can be bounded by using Theorem \ref{thm3.6}. To bound the second term ``${\tt II}$", 
		we first have 
		\begin{align}\label{eq_3.57}
			{\tt II} &\leq \bigg\|\sum_{m=0}^{\ell}(\xi(t_m) - \xi^m)\Delta W_{m+1}\bigg\| + \bigg\|\sum_{m=0}^{\ell}\int_{t_m}^{t_{m+1}}(\xi(s) -\xi(t_m))\,dW(s)\bigg\| \\\nonumber
			&= {\tt II}_1 + {\tt II}_2.
		\end{align}
		Then by It\^o isometry we get
		\begin{align}\label{eq_3.58}
			\mE[{\tt II}_1]^2 &= \mE\bigg[k\sum^{\ell}_{m=0}\|\xi(t_m) -\xi^m\|^2\bigg]  \leq C\mE\bigg[k\sum^{\ell}_{m=0}\|\vB(\vu(t_m)) - \vB(\tilde{\vu}^m)\|^2\bigg]\\\nonumber
			& \leq C\mE\bigg[k\sum^{\ell}_{m=0}\|\ve_{\tilde{\vu}}^m\|^2\bigg] \leq Ck.
		\end{align}
		Similarly, by the H\"older continuity \eqref{eq2.8a} we have 
		\begin{align}\label{eq_3.59}
			\mE[{\tt II}_2]^2 &= \sum_{m=0}^{\ell}\mE\bigg[\int_{t_m}^{t_{m+1}}\|\xi(s)-\xi(t_m)\|^2\,ds\bigg] \\\nonumber
			&\leq \sum_{m=0}^{\ell}\mE\bigg[\int_{t_m}^{t_{m+1}}\|\vu(s)-\vu(t_m)\|^2\,ds\bigg] \leq Ck,
		\end{align}
		
		Finally, substituting \eqref{eq_3.57}--\eqref{eq_3.59} into \eqref{eq_3.56} and using Theorem \ref{thm3.6} give us the desired estimate. The proof is complete.	
	\end{proof}
	
	%%%%%%%
	\section{Fully discrete finite element methods} \label{sec4}
	%\subsection{Preliminaries}
	In this section, we formulate and analyze finite element spatial discretization 
	for Algorithm 1 and 2. To the end, let $\cT_h$ be a quasi-uniform triangulation of the polygonal ($d=2$) or polyhedral ($d=3$) bounded domain $D$. We introduce 
	the following two basic Lagrangian finite element spaces:
	\begin{align}
		V_h &= \{\phi \in C(\overline{D});\quad\phi_{|_{K}} \in P_\ell(K)\quad\forall K\in \cT_h\},\\
		X_h &= \{\phi \in C(\overline{D});\quad\phi_{|_{K}} \in P_\ell(K)\quad\forall K\in \cT_h\},
	\end{align}
	where $P_{\ell}(K)$ $(\ell \geq 1)$ denotes the set of polynomials of degree less than or equal to $\ell$ over the element $K \in \cT_h$. The finite element spaces to be used to
	formulate our finite element methods are defined as follows:
	\begin{align}
		\vH_h = [V_h\cap H^1_{per}(D)]^d,\quad L_h = V_h\cap L^2_{per}(D)/\mathbb{R}, \quad S_h = X_h \cap L^2_{per}(D)/\mathbb{R}.
	\end{align}
	In addition, we introduce spaces
	\begin{align}
		\mV_h = L^2(\Omega,\vH_h),\qquad \mW_h = L^2(\Omega,L_h).
	\end{align}
	
	Recall that the $L^2$-projection $\mathcal{P}^0_h:[L^2_{per}(D)]^d \rightarrow \vH_h$ is defined by   
	\begin{align}\label{eq4.4}
		(\phi-\cP_h^0\phi,\,\xi) = 0\qquad\forall \xi\in \vH_h
	\end{align}
	and the $H^1$-projection $\cP_h^1:H^1_{per}(D)/\mathbb{R} \rightarrow L_h$ is 
	defined by
	\begin{align}
		\bigl((\nab(\chi - \cP_h^1\chi),\nab\eta \bigr) = 0\qquad\forall \eta \in L_h.
	\end{align}
	It is well known \cite{BS2008} that $\cP_h^0$ and $\cP_h^1$ satisfy following   estimates:
	\begin{align}
		\label{eq4.6}\|\phi - \cP_h^0\phi\| + h\|\nab(\phi-\cP_h^0\phi)\| &\leq C\,h^2\|\phi\|_{H^2}\qquad\forall\phi\in \vH^2_{per}(D),\\
		\label{eq4.7}\|\chi - \cP_h^1\chi\| + h\|\nab(\chi-\cP_h^1\chi)\| &\leq C\,h^2\|\chi\|_{H^2}\qquad\forall\chi\in H^1_{per}/\mathbb{R}\cap H^2_{per}(D).
	\end{align}
	
	For the clarity we only 
	consider $P_1$-finite element space in this section (i.e., $\ell=1$), 
	the results of this section can be easily extended to high order finite elements. 
	
	%%%%%%
	\subsection{Finite element methods for the standard Chorin scheme}
	Approximating the velocity space and pressure space respectively by the finite element 
	spaces $\vH_h$ and $L_h$ in Algorithm 1, we then obtain the fully discrete 
	finite element version of the standard Chorin scheme given below as Algorithm 3. 
	We also note that a similar algorithm was proposed in \cite{CHP12}. 
	
	\medskip
	\noindent
	{\bf Algorithm 3} 
	
	Let $n\geq 0$. Set $\tilde{\vu}_h^0 = \cP_h^0\vu_0$. For $n=0,1,2,\cdots$ do the following steps:
	\smallskip
	
	{\em Step 1:} Given $\vu^n_h \in L^2(\Ome,\vH_h)$ and $\tilde{\vu}^n_h \in L^2(\Ome,\vH_h)$, find $\tilde{\vu}^{n+1}_h \in L^2(\Ome,\vH_h)$ such that $\mP$-a.s.
	\begin{align}\label{A3_step1}
		\bigl(\tilde{\vu}^{n+1}_h, &\vv_h \bigr)\, +\, k \bigl(\nab\tilde{\vu}^{n+1}_h,\nab\vv_h\bigr) \\\nonumber
		& = \bigl(\vu_h^n,\vv_h \bigr)  \,+\,k\bigl(\vf^{n+1}, \vv_h\bigr) \,+\, \bigl(\vB(\tilde{\vu}^n_h)\Delta W_{n+1},\nab \vv_h\bigr)\qquad\forall\vv_h\in \vH_h.
	\end{align}

	\smallskip
	{\em Step 2:} Find $p^{n+1}_h \in L^2(\Ome,L_h)$ such that $\mP$-a.s.
	\begin{align}\label{A3_step2} 
		\bigl(\nab p_h^{n+1},\nab\phi_h\bigr) = \frac{1}{k}\bigl(\tilde{\vu}_h^{n+1},\nab\phi_h\bigr)\qquad \forall\phi_h\in L_h.
	\end{align}
	
	\smallskip
	{\em Step 3:} Define $\vu^{n+1}_h \in L^2(\Ome, \vH_h)$ by
	\begin{align}
		%	\bigl(\vu_h^{n+1},\vv_h\bigr)= \bigl(\tilde{\vu}_h^{n+1},\vv_h \bigr) - k \bigl(\nab p_h^{n+1},\vv_h \bigr) \qquad \forall\vv_h\in \vH_h.
		\vu_h^{n+1} = \tilde{\vu}_h^{n+1} -k \nab p_h^{n+1}.
	\end{align}
	
	\medskip
	As mentioned in Section \ref{sec-1}, eliminating $\vu^n$ in \eqref{A3_step1}  using \eqref{A3_step2}, we obtain
	\begin{subequations}\label{eq4.11}
		\begin{align}\label{eq4.11a}
			(\tilde{\vu}^{n+1}_h &- \tilde{\vu}^n_h,\vv_h)\, +\, k(\nab \tilde{\vu}^{n+1}_h,\nab\vv_h) \\\nonumber
			& + k(\nab p^{n}_h,\vv_h) = k\big(\vf^{n+1},\vv_h\big) +  (\vB(\tilde{\vu}_h^n)\Delta W_{n+1},\vv_h)\qquad\forall\vv_h\in \vH_h,\\
			\label{eq4.11b}&(\tilde{\vu}_h^{n+1},\nab\phi_h) = k(\nab p_h^{n+1},\nab\phi_h) \qquad \forall\phi_h\in L_h.
		\end{align}
		
	\end{subequations}

Next, we state the stability estimates for $\{(\tilde{\vu}_h^n, p_h^n)\}_{n=0}^M$ in the following lemma, which will be used in the fully discrete error analysis later.
Since its proof follows from the same lines of that for Lemma \ref{lemma3.1}, we omit it 
to save space.

\begin{lemma}\label{lemma_fully_stability}
	Let $\{(\tilde{\vu}_h^n,p_h^n)\}_{n=0}^M$ be generated by Algorithm 3, then there holds 
	\begin{subequations}
		\begin{align}\label{eq4.14a}
			\max_{0\leq n \leq M}\mE [\|\tilde{\vu}_h^n\|^2] + \mE \bigg[\sum_{n=1}^{M} \|\tilde{\vu}_h^n - \tilde{\vu}_h^{n-1}\|^2\bigg] + \mE \bigg[k\sum_{n=0}^{M} \|\nab \tilde{\vu}_h^n\|^2\bigg] 
			&\leq C, \\
			\label{eq4.14b}	\mE \bigg[k\sum_{n=0}^{M}\|\nab p_h^n\|^2\bigg] &\leq \frac{C}k,
		\end{align}
	\end{subequations}
	where $C$ is a positive constant depending only on $D_T, u_0$, and $\vf$.	
\end{lemma}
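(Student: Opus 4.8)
The plan is to replicate the energy argument used for the semi-discrete Lemma~\ref{lemma3.1}, carried out entirely within the finite element variational framework \eqref{eq4.11} so that no genuine integration by parts is required; every step that relied on integration by parts at the continuous level is now furnished by the discrete relation \eqref{eq4.11b}. First I would test the discrete momentum equation \eqref{eq4.11a} with $\vv_h=\tilde{\vu}_h^{n+1}$, producing the discrete counterpart of \eqref{eq3.6} in which the coupling term $k(\nab p_h^n,\tilde{\vu}_h^{n+1})$ appears. To eliminate this term I would invoke \eqref{eq4.11b} with $\phi_h=p_h^n$, which yields the exact discrete identity $(\tilde{\vu}_h^{n+1},\nab p_h^n)=k(\nab p_h^{n+1},\nab p_h^n)$, the finite element analogue of \eqref{eq3.7}. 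Substituting gives the fully discrete version of \eqref{eq3.8}.

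Next I would apply the algebraic identity $2a(a-b)=a^2-b^2+(a-b)^2$ to $(\tilde{\vu}_h^{n+1}-\tilde{\vu}_h^n,\tilde{\vu}_h^{n+1})$, and rewrite the pressure cross term $k^2(\nab p_h^{n+1},\nab p_h^n)$ by the differenced form of \eqref{eq4.11b}, namely $(\tilde{\vu}_h^{n+1}-\tilde{\vu}_h^n,\nab p_h^{n+1})=k(\nab(p_h^{n+1}-p_h^n),\nab p_h^{n+1})$ (subtract \eqref{eq4.11b} at consecutive levels and set $\phi_h=p_h^{n+1}$). This converts the cross term into $k^2\|\nab p_h^{n+1}\|^2$ plus the residual $-k(\tilde{\vu}_h^{n+1}-\tilde{\vu}_h^n,\nab p_h^{n+1})$, mirroring the passage to \eqref{eq3.10}. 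Taking expectations, the stochastic forcing is handled exactly as in \eqref{eq3.12}: since $\tilde{\vu}_h^n$ is $\cF_{t_n}$-measurable and $\Delta W_{n+1}$ has mean zero and is independent of $\cF_{t_n}$, we have $\mE[(\vB(\tilde{\vu}_h^n)\Delta W_{n+1},\tilde{\vu}_h^n)]=0$, whence $\mE[(\vB(\tilde{\vu}_h^n)\Delta W_{n+1},\tilde{\vu}_h^{n+1})]=\mE[(\vB(\tilde{\vu}_h^n)\Delta W_{n+1},\tilde{\vu}_h^{n+1}-\tilde{\vu}_h^n)]$, and Young's inequality with $\mE[|\Delta W_{n+1}|^2]=k$ and the linear growth \eqref{eq2.6b} contributes $Ck\,\mE[\|\tilde{\vu}_h^n\|^2]$ plus a small multiple of $\mE[\|\tilde{\vu}_h^{n+1}-\tilde{\vu}_h^n\|^2]$ absorbed on the left. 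The residual and the body force $k(\vf^{n+1},\tilde{\vu}_h^{n+1})$ are bounded by Schwarz and Young exactly as in \eqref{eq3.11}--\eqref{eq3.12b}.

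Summing the resulting inequality---the discrete analogue of \eqref{eq3.13}---over $n=0,\dots,m$ telescopes the terms $\mE[\|\tilde{\vu}_h^{n}\|^2]$ and leaves $\tfrac16 k^2\sum_n\mE[\|\nab p_h^{n+1}\|^2]$ on the left; the discrete Gronwall inequality then gives \eqref{eq4.14a} together with $k^2\sum_{n=0}^M\mE[\|\nab p_h^n\|^2]\le C$, and dividing by $k$ yields \eqref{eq4.14b}. I expect no genuine obstacle, only one point needing care: after rewriting the pressure cross term one must choose the Young's constants (as in \eqref{eq3.11}) so that the residual $-k(\tilde{\vu}_h^{n+1}-\tilde{\vu}_h^n,\nab p_h^{n+1})$ is absorbed by the positive quantities $\tfrac12\|\tilde{\vu}_h^{n+1}-\tilde{\vu}_h^n\|^2$ and $k^2\|\nab p_h^{n+1}\|^2$, leaving strictly positive coefficients on the left. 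Finally, I note that this argument uses no inf-sup compatibility between $\vH_h$ and $L_h$---the structural advantage of the Chorin splitting---and that the factor $k^{-1}$ in \eqref{eq4.14b} is intrinsic, matching the semi-discrete bound \eqref{eq3.5b}.
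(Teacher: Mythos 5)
Your proposal is correct and follows essentially the same route as the paper: the authors omit the proof of this lemma precisely because it "follows from the same lines" as Lemma \ref{lemma3.1}, and your argument is the faithful finite element transcription of that energy proof, with the discrete divergence relation \eqref{eq4.11b} (tested with $p_h^n$ and, in differenced form, with $p_h^{n+1}$) supplying the two identities that integration by parts furnished at the semi-discrete level. The handling of the noise term via the martingale property and It\^o isometry, the absorption of the residual $-k(\tilde{\vu}_h^{n+1}-\tilde{\vu}_h^n,\nab p_h^{n+1})$, and the final Gronwall step all match the template of \eqref{eq3.6}--\eqref{eq3.13}.
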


	The following theorem provides an error estimate in a strong norm for the finite element solution of Algorithm 3.

	\begin{theorem}\label{them4.1} 
		Let $\{\big(\tilde{\vu}^m, p^m\big)\}_{m=0}^M$ and $\{(\tilde{\vu}_h^m,p_h^m)\}_{m=0}^M$ be generated respectively by Algorithm 1 and Algorithm 3.  Then under the assumptions of Lemmas \ref{lemma3.1}, \ref{lemma_fully_stability} and Corollary \ref{cor3.5} there holds
		\begin{align}\label{eq4.14}
			&\bigg(\mE\Bigl[ k\sum_{m=0}^M\bigl\|\tilde{\vu}^m - \tilde{\vu}_h^m \bigr\|^2\Bigr]\bigg)^{\frac12} \\\nonumber
			&\qquad\qquad+ \max_{0\leq \ell \leq M}\bigg(\mE\Bigl[\Bigr\|k\sum_{n=1}^{\ell}\nab(\tilde{\vu}^n - \tilde{\vu}_h^n)\Bigr\|^2\Bigr]\bigg)^{\frac12}  \leq C\Bigl(k^{\frac14}\, +\,  h k^{-\frac12} \Bigr),
		\end{align}
		where $C \equiv C(D_T, \vu_0, \vf)$ is a positive constant.
	\end{theorem}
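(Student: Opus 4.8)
The plan is to reproduce the energy argument of Theorem \ref{thm3.3}, but now comparing the two \emph{time-discrete} processes $\{\tilde{\vu}^m\}$ and $\{\tilde{\vu}_h^m\}$ rather than the exact and the time-discrete solutions, with the spatial projection errors entering as extra source terms. First I would split the velocity and pressure errors by means of the projections \eqref{eq4.4}--\eqref{eq4.7}, writing $\tilde{\vu}^m - \tilde{\vu}_h^m = \theta^m + \phi^m$ with $\theta^m = \tilde{\vu}^m - \cP_h^0\tilde{\vu}^m$ and $\phi^m = \cP_h^0\tilde{\vu}^m - \tilde{\vu}_h^m \in \vH_h$, and likewise $p^m - p_h^m = \rho^m + \psi^m$ with $\rho^m = p^m - \cP_h^1 p^m$ and $\psi^m = \cP_h^1 p^m - p_h^m \in L_h$. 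The projection errors $\theta^m,\rho^m$ are controlled directly by \eqref{eq4.6}--\eqref{eq4.7}, while the discrete errors $\phi^m,\psi^m$ are the genuine unknowns to estimate.

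Next I would derive the two error equations. Testing the strong form \eqref{eq3.4a} of Algorithm 1 against $\vv_h \in \vH_h$, integrating the Laplacian by parts (periodicity removes the boundary term), and subtracting \eqref{eq4.11a} yields, after using the $L^2$-projection property $(\theta^{n+1}-\theta^n,\vv_h)=0$,
\begin{align*}
(\phi^{n+1} - \phi^n, \vv_h) + k(\nab\phi^{n+1}, \nab\vv_h) + k(\nab(p^n - p_h^n), \vv_h) \\
= -k(\nab\theta^{n+1}, \nab\vv_h) + ((\vB(\tilde{\vu}^n) - \vB(\tilde{\vu}_h^n))\Delta W_{n+1}, \vv_h).
\end{align*}
Subtracting \eqref{eq4.11b} from the weak form of \eqref{eq3.4b} and invoking the $H^1$-projection gives the discrete incompressibility relation $(\ve_{\tilde{\vu}}^{n+1}, \nab\phi_h) = k(\nab\psi^{n+1}, \nab\phi_h)$ for all $\phi_h \in L_h$. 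Exactly as in Theorem \ref{thm3.3}, I would introduce the cumulative pressure error $\E_p^m := k\sum_{n=0}^m(p^n - p_h^n)$ to recast the awkward time-lagged term $k(\nab(p^n - p_h^n), \phi^{n+1})$ through summation-by-parts and this incompressibility relation.

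Then comes the energy estimate: set $\vv_h = \phi^{n+1}$, apply the identity $2a(a-b) = a^2 - b^2 + (a-b)^2$, apply the cumulative operator $k\sum_{m=0}^\ell$ followed by $\mE[\cdot]$, and bound each term. The projection-error contribution is the crucial one: since $\|\nab\theta^{n+1}\| \leq Ch\|\Delta\tilde{\vu}^{n+1}\|$ while \eqref{eq3.56b} only gives $\mE[k\sum\|\Delta\tilde{\vu}^n\|^2] \leq C/k$, one finds $(\mE[k\sum\|\nab\theta^n\|^2])^{\frac12} \leq C h k^{-\frac12}$, and this is precisely where the factor $hk^{-\frac12}$ is generated; the cumulative bounds of Corollary \ref{cor3.5} handle the analogous pressure projection terms. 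The noise term is dealt with by the martingale property and It\^o isometry, reducing it to $\mE[\|\ve_{\tilde{\vu}}^n\|^2]$, which is absorbed by the discrete Gronwall inequality. The residual velocity--pressure coupling terms, estimated verbatim as terms {\tt I}--{\tt V} of Theorem \ref{thm3.3}, furnish a $Ck^{\frac12}$ bound on the squared energy and hence, after taking square roots, the $k^{\frac14}$ part of the estimate.

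The main obstacle is the simultaneous control of the time-lagged pressure term and the $k^{-\frac12}$-growing stability constants. Unlike the divergence-free case, here the discrete pressure obeys only $\mE[k\sum\|\nab p^n\|^2] \leq C/k$ (see \eqref{eq4.14b}), so every place where a spatial projection error meets a pressure gradient or a second-derivative norm must be tracked with care to confirm that the growth factor is exactly $k^{-\frac12}$ and no worse. Assembling the bounds and concluding via the discrete Gronwall inequality together with \eqref{eq4.6}--\eqref{eq4.7} then yields the claimed estimate $C(k^{\frac14} + hk^{-\frac12})$.
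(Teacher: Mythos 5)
Your proposal is correct and follows essentially the same route as the paper's proof: both run the energy argument of Theorem \ref{thm3.3} on the cumulative (summed-in-time) error equation between the semi-discrete and fully discrete schemes, insert the $L^2$- and $H^1$-projections to isolate the spatial consistency terms, invoke the cumulative stability bounds of Corollary \ref{cor3.5} together with \eqref{eq3.5b}, \eqref{eq3.56b} and \eqref{eq4.14b}, treat the noise by the martingale property and It\^o isometry, and close with the discrete Gronwall inequality. The only cosmetic difference is that you split the error as $\theta^m+\phi^m$ and test with $\phi^{m+1}$, whereas the paper keeps the full error and tests with $\cP_h^0\ve_{\tilde{\vu}_h}^{m+1}$ — the same algebra — and your diagnosis of where the $hk^{-\frac12}$ factor originates matches the paper's terms {\tt II}--{\tt V}.
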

	
	\begin{proof}
		The proof is conceptually similar to that of Theorem \ref{thm3.3}. 
		Setting $\ve_{\tilde{\vu}_h}^m =: \tilde{\vu}^m - \tilde{\vu}_h^m$ and $\veps_{p_h}^m =: p^m - p^m_h$. Without loss of the generality, we assume $\ve_{\tilde{\vu}_h}^0 = 0$ and $\veps_{p_h}^0 = 0$ because they are of high order accuracy, hence are negligible.    
		
		First, applying the summation operator $\sum_{n=0}^m$ to \eqref{eq4.11a}, we obtain
		\begin{align}\label{eq4.15}
			\bigl(\tilde{\vu}_h^{m+1},\vv_h\bigr) &+ k\Bigl(\sum_{n=0}^{m+1}\nab\tilde{\vu}_h^n,\nab\vv_h\Bigr) + k\Bigl(\sum_{n=0}^m\nab p^n_h,\vv_h\Bigr) \\\nonumber
			&= \bigl(\vu^0_h,\vv_h\bigr) + \Bigl(\sum_{n=0}^m \vB(\tilde{\vu}_h^n)\Delta W_{n+1},\vv_h\Bigr) \qquad \forall\vv_h\in \vH_{h}.
		\end{align}
Subtracting \eqref{eq3.20} from \eqref{eq4.15} yields the following error equations:
	\begin{align}\label{eq4.16}
		\bigl(\ve_{\tilde{\vu}_h}^{m+1},\vv_h\bigr) &+ k\Bigl(\sum_{n=0}^{m+1}\nab\ve_{\tilde{\vu}_h}^n,\nab\vv_h\Bigr)+ k\Bigl(\sum_{n=0}^m\nab\veps_{p_h}^n,\vv_h\Bigr) \\\nonumber
		&= \Bigl(\sum_{n=0}^m(\vB(\tilde{\vu}^n) - \vB(\tilde{\vu}_h^n))\Delta W_{n+1},\vv_h\Bigr)\qquad \forall\vv_h\in \vH_{h},\\
	\label{eq4.17}	\bigl(\ve_{\tilde{\vu}_h}^{m+1},\nab\phi_h\bigr) &= k\bigl(\nab\veps_{p_h}^{m+1},\nab\phi_h\bigr)\qquad \forall\phi_h\in L_h. 
	\end{align}
Choosing $\vv_h = \cP_h^0 \ve_{\tilde{\vu}_h}^{m+1} = \ve_{\tilde{\vu}_h}^{m+1} - \ta^{m+1}$; $\ta^m = \tilde{\vu}_h^m - \cP_h^0\tilde{\vu}_h^m$, then \eqref{eq4.16} becomes
	\begin{align}\label{eq4.18}
		\|\ve_{\tilde{\vu}_h}^{m+1}\|^2 &+ k\Bigl(\sum_{n=0}^{m+1}\nab\ve_{\tilde{\vu}_h}^n,\nab\ve_{\tilde{\vu}_h}^{m+1}\Bigr) + k\Bigl(\sum_{n=0}^m\nab\veps_{p_h}^n,\cP_h^0\ve_{\tilde{\vu}_h}^{m+1}\Bigr) \\\nonumber
		&= \bigl(\ve_{\tilde{\vu}_h}^{m+1},\ta^{m+1}\bigr) + k\Bigl(\sum_{n=0}^{m+1}\nab\ve_{\tilde{\vu}_h}^{n},\nab\ta^{m+1}\Bigr)\\\nonumber
		&\qquad + \Bigl(\sum_{n=0}^{m}(\vB(\tilde{\vu}^n) - \vB(\tilde{\vu}_h^n))\Delta W_{n+1},\cP_h^0\ve_{\tilde{\vu}_h}^{m+1}\Bigr).
	\end{align}	
Setting $\phi_h = \sum_{n=0}^m\cP_h^1\veps_{p_h}^n = \sum_{n=0}^m\veps_{p_h}^n -\sum_{n=0}^m\xi^n$
in \eqref{eq4.17}, where $\xi^n = p^n - \cP_h^1p^n$, we obtain
\begin{align}
	\Bigl(\ve_{\tilde{\vu}_h}^{m+1},\sum_{n=0}^m\nab\cP_h^1\veps_{p_h}^n\Bigr) = k\Bigl(\nab\veps_{p_h}^{m+1},\sum_{n=0}^m\nab\cP_h^1\veps_{p_h}^n\Bigr)
\end{align}
In addition, by using the properties of $\cP_h^0-$ and $\cP_h^1$-projection we have
\begin{align}\label{eq4.20}
	&k\biggl(\sum_{n=0}^m\nab\veps_{p_h}^n,\cP_h^0\ve_{\tilde{\vu}_h}^{m+1}\biggr) = k\Bigl(\sum_{n=0}^m\nab\veps_{p_h}^n,\ve_{\tilde{\vu}_h}^{m+1}\Bigr) - k\Bigl(\sum_{n=0}^m\nab\veps_{p_h}^n,\ta^{m+1}\Bigr)\\\nonumber
	&\quad = k\Bigl(\sum_{n=0}^m\nab\cP_h^1\veps_{p_h}^n,\ve_{\tilde{\vu}_h}^{m+1}\Bigr) + k\Bigl(\sum_{n=0}^m\nab\xi^n,\ve_{\tilde{\vu}_h}^{m+1}\Bigr) - k\Bigl(\sum_{n=0}^m\nab\veps_{p_h}^n,\ta^{m+1}\Bigr)\\\nonumber
	&\quad = k^2\Bigl(\nab\veps_{p_h}^{m+1},\sum_{n=0}^m\nab\cP_h^1\veps_{p_h}^n\Bigr) + k\Bigl(\sum_{n=0}^m\nab\xi^n,\ve_{\tilde{\vu}_h}^{m+1}\Bigr) - k\Bigl(\sum_{n=0}^m\nab\veps_{p_h}^n,\ta^{m+1}\Bigr)\\\nonumber
	&\quad = k^2\Bigl(\nab\veps_{p_h}^{m+1},\sum_{n=0}^{m+1}\nab\cP_h^1\veps_{p_h}^n\Bigr) - k^2\bigl(\nab\veps_{p_h}^{m+1},\nab\cP_h^1\veps_{p_h}^{m+1}\bigr) \\\nonumber
	&\qquad+ k\Bigl(\sum_{n=0}^m\nab\xi^n,\ve_{\tilde{\vu}_h}^{m+1}\Bigr)- k\Bigl(\sum_{n=0}^m\nab\veps_{p_h}^n,\ta^{m+1}\Bigr)\\\nonumber
	&\quad = k^2\Bigl(\nab\veps_{p_h}^{m+1},\sum_{n=0}^{m+1}\nab\veps_{p_h}^n\Bigr) - k^2\Bigl(\nab\veps_{p_h}^{m+1},\sum_{n=0}^{m+1}\nab\xi^n\Bigr) \\ \nonumber
	&\qquad - k^2\bigl(\nab\veps_{p_h}^{m+1},\nab\cP_h^1\veps_{p_h}^{m+1}\bigr)  + k\Bigl(\sum_{n=0}^m\nab\xi^n,\ve_{\tilde{\vu}_h}^{m+1}\Bigr)- k\Bigl(\sum_{n=0}^m\nab\veps_{p_h}^n,\ta^{m+1}\Bigr).
\end{align}
Moreover, by using the orthogonality property of $\cP_h^1$, we have 
\begin{align}
	- k^2\bigl(\nab\veps_{p_h}^{m+1},\nab\cP_h^1\veps_{p_h}^{m+1}\bigr) &= - k^2\bigl(\nab(\veps_{p_h}^{m+1} - \cP_h^1\veps_{p_h}),\nab\cP_h^1\veps_{p_h}^{m+1}\bigr) \\\nonumber
	&\qquad+ k^2\|\nab\cP_h^1\veps_{p_h}^{m+1}\|^2 
	= k^2\|\nab\cP_h^1\veps_{p_h}^{m+1}\|^2,
\end{align}
which helps to reduce \eqref{eq4.20} into 
\begin{align}\label{eq4.22}
	k\biggl(\sum_{n=0}^m\nab\veps_{p_h}^n,\cP_h^0\ve_{\tilde{\vu}_h}^{m+1}\biggr) &= k^2\Bigl(\nab\veps_{p_h}^{m+1},\sum_{n=0}^{m+1}\nab\veps_{p_h}^n\Bigr) - k^2\Bigl(\nab\veps_{p_h}^{m+1},\sum_{n=0}^{m+1}\nab\xi^n\Bigr) \\\nonumber
	&\quad + k^2\|\nab\cP_h^1\veps_{p_h}^{m+1}\|^2 + k\Bigl(\sum_{n=0}^m\nab\xi^n,\ve_{\tilde{\vu}_h}^{m+1}\Bigr)\\\nonumber
	&\quad- k\Bigl(\sum_{n=0}^m\nab\veps_{p_h}^n,\ta^{m+1}\Bigr).
\end{align}
Substituting \eqref{eq4.22} into \eqref{eq4.18} and rearranging terms yield 
\begin{align}\label{eq4.23}
		&\|\ve_{\tilde{\vu}_h}^{m+1}\|^2 + k\Bigl(\sum_{n=0}^{m+1}\nab\ve_{\tilde{\vu}_h}^n,\nab\ve_{\tilde{\vu}_h}^{m+1}\Bigr) + k^2\Bigl(\nab\veps_{p_h}^{m+1},\sum_{n=0}^{m+1}\nab\veps_{p_h}^n\Bigr)\\\nonumber
		&\qquad  + k^2\|\nab\cP_h^1\veps_{p_h}^{m+1}\|^2\\\nonumber
		&\quad = k^2\Bigl(\nab\veps_{p_h}^{m+1},\sum_{n=0}^{m+1}\nab\xi^n\Bigr) +
		 k\Bigl(\div\ve_{\tilde{\vu}_h}^{m+1},\sum_{n=0}^m\xi^n\Bigr) \\\nonumber
		 &\qquad+ k\Bigl(\sum_{n=0}^m\nab\veps_{p_h}^n,\ta^{m+1}\Bigr) + \bigl(\ve_{\tilde{\vu}_h}^{m+1},\ta^{m+1}\bigr) + k\Bigl(\sum_{n=0}^{m+1}\nab\ve_{\tilde{\vu}_h}^{n},\nab\ta^{m+1}\Bigr)\\\nonumber
		 &\qquad + \Bigl(\sum_{n=0}^{m}(\vB(\tilde{\vu}^n) - \vB(\tilde{\vu}_h^n))\Delta W_{n+1},\cP_h^0\ve_{\tilde{\vu}_h}^{m+1}\Bigr). 
\end{align}

Next, we use the identity $2a(a-b) = a^2- b^2 + (a-b)^2$ to create telescoping sums on the left side of \eqref{eq4.23} followed by taking the expectation to get
\begin{align}\label{eq4.24}
			&\mE\bigl[\|\ve_{\tilde{\vu}_h}^{m+1}\|^2\bigr] + \frac{k}{2}\mE\biggl[\biggl\|\sum_{n=0}^{m+1}\nab\ve_{\tilde{\vu}_h}^n\biggr\|^2 - \biggl\|\sum_{n=0}^{m}\nab\ve_{\tilde{\vu}_h}^n\biggr\|^2 + \|\nab\ve_{\tilde{\vu}_h}^{m+1}\|^2\biggr]\\\nonumber
			&\qquad + \frac{k^2}{2}\mE\biggl[\biggl\|\sum_{n=0}^{m+1}\nab\veps_{p_h}^n\biggr\|^2 - \bigg\|\sum_{n=0}^{m}\nab\veps_{p_h}^n\biggr\|^2 + \|\nab\veps_{p_h}^{m+1}\|^2\biggr]\\ \nonumber
	&\qquad + k^2\mE\bigl[\|\nab\cP_h^1\veps_{p_h}^{m+1}\|^2\bigr]\\\nonumber
	&\quad = k^2\mE\Bigl[\Bigl(\nab\veps_{p_h}^{m+1},\sum_{n=0}^{m+1}\nab\xi^n\Bigr)\Bigr] +
	k\mE\Bigl[\Bigl(\div\ve_{\tilde{\vu}_h}^{m+1},\sum_{n=0}^m\xi^n\Bigr)\Bigr] \\\nonumber
	&\qquad+ k\mE\Bigl[\Bigl(\sum_{n=0}^m\nab\veps_{p_h}^n,\ta^{m+1}\Bigr)\Bigr] + \mE\Bigl[\bigl(\ve_{\tilde{\vu}_h}^{m+1},\ta^{m+1}\bigr)\Bigr] \\\nonumber
	&\qquad + k\mE\Bigl[\Bigl(\sum_{n=0}^{m+1}\nab\ve_{\tilde{\vu}_h}^{n},\nab\ta^{m+1}\Bigr)\Bigr]\\\nonumber
	&\qquad + \mE\Bigl[\Bigl(\sum_{n=0}^{m}(\vB(\tilde{\vu}^n) - \vB(\tilde{\vu}_h^n))\Delta W_{n+1},\cP_h^0\ve_{\tilde{\vu}_h}^{m+1}\Bigr)\Bigr].
\end{align}
Now, applying $k\sum_{m=0}^{\ell}$ for $0\leq \ell < M$ to \eqref{eq4.24} we obtain
\begin{align}\label{eq4.25}
	&k\sum_{m=0}^{\ell}\mE\bigl[\|\ve_{\tilde{\vu}_h}^{m+1}\|^2\bigr] + \frac12\mE\Bigl[\Bigl\|k\sum_{n=0}^{\ell+1}\nab\ve_{\tilde{\vu}_h}^n\Bigr\|^2\Bigr] + \frac{k^2}{2}\sum_{m=0}^{\ell}\mE\Bigl[\|\nab\ve_{\tilde{\vu}_h}^{m+1}\|^2\Bigr]\\\nonumber
	&\qquad+ \frac{k}{2}\mE\Bigl[\Bigl\|k\sum_{n=0}^{\ell+1}\nab\veps_{p_h}^n\Bigr\|^2\Bigr] + \frac{k^3}{2}\sum_{m=0}^{\ell}\mE\Bigl[\|\nab\veps_{p_h}^{m+1}\|^2  + 2\|\nab\cP_h^1\veps_{p_h}^{m+1}\|^2 \Bigr]\\\nonumber
	&\quad = \mE\Bigl[k^3\sum_{m=0}^{\ell}\Bigl(\nab\veps_{p_h}^{m+1},\sum_{n=0}^{m+1}\nab\xi^n\Bigr)\Bigr] +
	\mE\Bigl[k^2\sum_{m=0}^{\ell}\Bigl(\div\ve_{\tilde{\vu}_h}^{m+1},\sum_{n=0}^m\xi^n\Bigr)\Bigr] \\\nonumber
	&\qquad+ \mE\Bigl[k^2\sum_{m=0}^{\ell}\Bigl(\sum_{n=0}^m\nab\veps_{p_h}^n,\ta^{m+1}\Bigr)\Bigr] + \mE\Bigl[k\sum_{m=0}^{\ell}\bigl(\ve_{\tilde{\vu}_h}^{m+1},\ta^{m+1}\bigr)\Bigr] \\\nonumber
	&\qquad + \mE\Bigl[k^2\sum_{m=0}^{\ell}\Bigl(\sum_{n=0}^{m+1}\nab\ve_{\tilde{\vu}_h}^{n},\nab\ta^{m+1}\Bigr)\Bigr]\\\nonumber
	&\qquad + k\sum_{m=0}^{\ell}\mE\Bigl[\Bigl(\sum_{n=0}^{m}(\vB(\tilde{\vu}^n) - \vB(\tilde{\vu}_h^n))\Delta W_{n+1},\cP_h^0\ve_{\tilde{\vu}_h}^{m+1}\Bigr)\Bigr]\\\nonumber
	&\quad := {\tt I + \cdots + VI}.
\end{align}
Next, we bound the right-hand side of \eqref{eq4.25} as follows. 
By using the discrete H\"older inequality we get
\begin{align}
	{\tt I} &= k\mE\Bigl[k\sum_{m=0}^{\ell}\Bigl(\nab\veps_{p_h}^{m+1},k\sum_{n=0}^{m+1}\nab\xi^n\Bigr)\Bigr]\\\nonumber
	&\leq k\mE\biggl[k\sum_{m=0}^{\ell}\|\nab\veps_{p_h}^{m+1}\|\biggl\|k\sum_{n=0}^{m+1}\nab\xi^n\biggr\|\biggr]\\\nonumber
	&\leq k\mE\Bigl[\Bigl(k\sum_{m=0}^{\ell}\|\nab\veps_{p_h}^{m+1}\|^2\Bigr)^{\frac12}\Bigl(k\sum_{m=0}^{\ell}\Bigl\|k\sum_{n=0}^{m+1}\nab\xi^n\Bigr\|^2\Bigr)^{\frac12}\Bigr]\\\nonumber
	&\leq k\Bigl(\mE\Bigl[k\sum_{m=0}^{\ell}\|\nab\veps_{p_h}^{m+1}\|^2\Bigr]\Bigr)^{\frac12}\Bigl(\mE\Bigl[k\sum_{m=0}^{\ell}\Bigl\|k\sum_{n=0}^{m+1}\nab\xi^n\Bigr\|^2\Bigr]\Bigr)^{\frac12}.
\end{align}
In addition, by using the stability estimates from \eqref{eq3.5b} and \eqref{eq4.14b}, we have 
\begin{align*}
	\mE\Bigl[k\sum_{m=0}^{\ell}\|\nab\veps_{p_h}^{m+1}\|^2\Bigr] \leq \frac{C}{k},
\end{align*}
Similarly, using \eqref{eq4.7} and the stability estimate from \eqref{eq3.56} we get
\begin{align*}
	\mE\biggl[k\sum_{m=0}^{\ell}\Bigl\|k\sum_{n=0}^{m+1}\nab\xi^n\Bigr\|^2\biggr]&= 	\mE\biggl[k\sum_{m=0}^{\ell}\Bigl\|\nab\Bigl( k\sum_{n=0}^{m+1} p^n - \cP_h^1 \Bigl(k\sum_{n=0}^{m+1} p^n\Bigr)\Bigr)\Bigr\|^2\biggr]\\\nonumber
	& \leq C\mE\biggl[k\sum_{m=0}^{\ell}\Bigl\|k\sum_{n=0}^{m+1}\nab p^n\Bigr\|^2\biggr]\leq C.
\end{align*}
Therefore, ${\tt I} \leq Ck^{\frac12}$.

Next, using the fact that $\Bigl\|\sum_{n=0}^m\xi^n\Bigr\| \leq Ch\Bigl\|\sum_{n=0}^m\nab p^n\Bigr\|$ we have
\begin{align}
	{\tt II} &= \mE\Bigl[k^2\sum_{m=0}^{\ell}\Bigl(\div\ve_{\tilde{\vu}_h}^{m+1},\sum_{n=0}^m\xi^n\Bigr)\Bigr]\\\nonumber
	&\leq \frac{k^2}{8}\sum_{m=0}^{\ell}\mE\bigl[\|\nab\ve_{\tilde{\vu}_h}^{m+1}\|^2\bigr] + C\mE\Bigl[k^2\sum_{m=0}^{\ell}\Bigl\|\sum_{n=0}^m\xi^n\Bigr\|^2\Bigr]\\\nonumber
	&\leq \frac{k^2}{8}\sum_{m=0}^{\ell}\mE\bigl[\|\nab\ve_{\tilde{\vu}_h}^{m+1}\|^2\bigr] + \frac{Ch^2}{k}\mE\Bigl[k\sum_{m=0}^{\ell}\Bigl\|k\sum_{n=0}^m\nab p^n\Bigr\|^2\Bigr]\\\nonumber
	&\leq \frac{k^2}{8}\sum_{m=0}^{\ell}\mE\bigl[\|\nab\ve_{\tilde{\vu}_h}^{m+1}\|^2\bigr] + \frac{Ch^2}{k},
\end{align}
where \eqref{eq3.56} was used to obtain the last inequality. Expectedly, the first term 
will be absorbed to the left-hand side of \eqref{eq4.25} later.

Next, using summation by parts we obtain 
\begin{align}\label{eq4.28}
	{\tt III} &= \mE\Bigl[k^2\sum_{m=0}^{\ell}\Bigl(\sum_{n=0}^m\nab\veps_{p_h}^n,\ta^{m+1}\Bigr)\Bigr]\\\nonumber
	&= \mE\Bigl[k^2\Bigl(\sum_{n=0}^{\ell + 1}\ta^n, \sum_{n=0}^{\ell+1}\nab\veps_{p_h}^n\Bigr)\Bigr] - \mE\Bigl[k^2\sum_{m=0}^{\ell}\Bigl(\sum_{n=0}^m\ta^n,\nab\veps_{p_h}^{m}\Bigr)\Bigr]\\\nonumber
	&\leq Ck\mE\Bigl[\Bigl\|\sum_{n=0}^{\ell+1}\ta^n\Bigr\|^2\Bigr] +\frac{k^3}{8}\mE\Bigl[\Bigl\|\sum_{n=0}^{\ell+1}\nab\veps_{p_h}^{n}\Bigr\|^2\Bigr]\\\nonumber
	&\qquad+ C\mE\Bigl[k\sum_{m=0}^{\ell}\Bigl\|\sum_{n=0}^m\ta^n\Bigr\|^2\Bigr] + \frac{k^3}{8}\sum_{m=0}^{\ell}\mE\bigl[\|\nab\veps_{p_h}^m\|^2\bigr].
\end{align}
In addition, we can use \eqref{eq4.6} and \eqref{eq3.55} to control the first and third terms on the right side of \eqref{eq4.28} as follows:
\begin{align}
	&Ck\mE\Bigl[\Bigl\|\sum_{n=0}^{\ell+1}\ta^n\Bigr\|^2\Bigr] + C\mE\Bigl[k\sum_{m=0}^{\ell}\Bigl\|\sum_{n=0}^m\ta^n\Bigr\|^2\Bigr] \\\nonumber
	&\qquad \leq Ch^4k\mE\Bigl[\Bigl\|\sum_{n=0}^{\ell+1}\Delta\tilde{\vu}^n\Bigr\|^2\Bigr] + Ch^4\mE\Bigl[k\sum_{m=0}^{\ell}\Bigl\|\sum_{n=0}^m\Delta\tilde{\vu}^n\Bigr\|^2\Bigr]
	\leq \frac{Ch^4}{k^2}.
\end{align}
Therefore, \begin{align}\label{eq4.30}
	{\tt III} \leq \frac{Ch^4}{k^2} + \frac{k^3}{8}\mE\biggl[\biggl\|\sum_{n=0}^{\ell+1}\nab\veps_{p_h}^{n}\biggr\|^2\biggr] + \frac{k^3}{8}\sum_{m=0}^{\ell}\mE\bigl[\|\nab\veps_{p_h}^m\|^2\bigr],
\end{align}
Again, expectedly, the last two terms on the right-hand side of \eqref{eq4.30} will be absorbed to the left side of \eqref{eq4.25} later.
		
It follows from \eqref{eq4.6} and \eqref{eq3.56b} that
\begin{align}
	{\tt IV} &= \mE\biggl[k\sum_{m=0}^{\ell}\bigl(\ve_{\tilde{\vu}_h}^{m+1},\ta^{m+1}\bigr)\biggr]\\\nonumber
	&\leq  k\sum_{m=0}^{\ell}\mE\bigl[\|\ta^{m+1}\|^2\bigr]+ \frac14 k\sum_{m=0}^{\ell}\mE\bigl[\|\ve_{\tilde{\vu}_h}^{m+1}\|^2\bigr] \\\nonumber
	&\leq Ch^4 \mE\Bigl[k\sum_{m=0}^{\ell}\|\Delta\tilde{\vu}^{m+1}\|^2\Bigr] + \frac14 k\sum_{m=0}^{\ell}\mE\bigl[\|\ve_{\tilde{\vu}_h}^{m+1}\|^2\bigr] \\\nonumber
	&\leq \frac{Ch^4}{k} + \frac14 k\sum_{m=0}^{\ell}\mE\bigl[\|\ve_{\tilde{\vu}_h}^{m+1}\|^2\bigr].
\end{align}	

To estimate term {\tt V}, we approach similarly as done for term {\tt III}. Namely, fist we use the summation by parts and then use \eqref{eq4.6} and \eqref{eq3.56}.
\begin{align}
	{\tt V} &= \mE\biggl[k^2\sum_{m=0}^{\ell}\Bigl(\sum_{n=0}^{m+1}\nab\ve_{\tilde{\vu}_h}^{n},\nab\ta^{m+1}\biggr)\Bigr]\\\nonumber
	&=\mE\biggl[k^2\Bigl(\sum_{n=0}^{\ell+1}\nab\ve_{\tilde{\vu}_h}^{n},\sum_{n=0}^{\ell+1}\nab\ta^{n}\Bigr)\biggr] - \mE\biggl[k^2\sum_{m=0}^{\ell}\Bigl(\sum_{n=0}^m\nab\ta^n, \nab\ve_{\tilde{\vu}_h}^{m+1}\Bigr)\biggr]\\\nonumber
	&\leq \frac18\mE\Bigl[\Bigl\|k\sum_{n=0}^{\ell+1}\nab\ve_{\tilde{\vu}_h}^n\Bigr\|^2\Bigr] + C\mE\Bigl[\Bigl\|k\sum_{n=0}^{\ell+1}\nab\ta^n\Bigr\|^2\Bigr]\\\nonumber
	&\quad + \frac{1}{8}\mE\Bigl[k^2\sum_{m=0}^{\ell}\|\nab\ve_{\tilde{\vu}_h}^{m+1}\|^2\Bigr] + C\mE\Bigl[k^2\sum_{m=0}^{\ell}\Bigl\|\sum_{n=0}^{m}\nab\ta^n\Bigr\|^2\Bigr] \\\nonumber
	&\leq \frac18\mE\Bigl[\Bigl\|k\sum_{n=0}^{\ell+1}\nab\ve_{\tilde{\vu}_h}^n\Bigr\|^2\Bigr] + \frac{1}{8}\mE\Bigl[k^2\sum_{m=0}^{\ell}\|\nab\ve_{\tilde{\vu}_h}^{m+1}\|^2\Bigr]  \\\nonumber
	&\quad+ Ch^2\mE\Bigl[\Bigl\|k\sum_{n=0}^m\Delta\tilde{\vu}^n\Bigr\|^2\Bigr] + \frac{Ch^2}{k}\mE\Bigl[k\sum_{m=0}^{\ell}\Bigl\|k\sum_{n=0}^m\Delta\tilde{\vu}^{n}\Bigr\|^2\Bigr]\\\nonumber
	&\leq \frac18\mE\Bigl[\Bigl\|k\sum_{n=0}^{\ell+1}\nab\ve_{\tilde{\vu}_h}^n\Bigr\|^2\Bigr] + \frac{1}{8}\mE\Bigl[k^2\sum_{m=0}^{\ell}\|\nab\ve_{\tilde{\vu}_h}^{m+1}\|^2\Bigr] + \frac{Ch^2}{k}.
\end{align}

We use the It\^o isometry to handle the noise term as follows:
\begin{align}
	{\tt VI} &= k\sum_{m=0}^{\ell}\mE\Bigl[\Bigl(\sum_{n=0}^{m}(\vB(\tilde{\vu}^n) - \vB(\tilde{\vu}_h^n))\Delta W_{n+1},\cP_h^0\ve_{\tilde{\vu}_h}^{m+1}\Bigr)\Bigr]\\\nonumber
	&\leq \frac14 k\sum_{m=0}^{\ell}\mE\bigl[\|\ve_{\tilde{\vu}_h}^{m+1}\|^2\bigr] + k\sum_{m=0}^{\ell}\mE\Bigl[\Bigl\|\sum_{n=0}^m(\vB(\tilde{\vu}^n) - \vB(\tilde{\vu}_h^n))\Delta W_{n+1}\Bigr\|^2\Bigr]\\\nonumber
	&= \frac14 k\sum_{m=0}^{\ell}\mE\bigl[\|\ve_{\tilde{\vu}_h}^{m+1}\|^2\bigr] + k\sum_{m=0}^{\ell}\mE\Bigl[k\sum_{n=0}^{m}\|\vB(\tilde{\vu}^n) - \vB(\tilde{\vu}_h^n)\|^2\Bigr]\\\nonumber
	&\leq \frac14 k\sum_{m=0}^{\ell}\mE\bigl[\|\ve_{\tilde{\vu}_h}^{m+1}\|^2\bigr] + Ck\sum_{m=0}^{\ell}k\sum_{n=0}^m\mE\bigl[\|\ve_{\tilde{\vu}_h}^n\|^2\bigr].
\end{align}

Finally, substituting the above estimates for terms {\tt I, II, III, IV, V, VI} into \eqref{eq4.25} yields the following inequality for $X^{\ell} = k\sum_{m=0}^{\ell} \mE\bigl[\|\ve_{\tilde{\vu}_h}^m\|^2\bigr]$:
\begin{align}\label{eq4.34}
	&\frac{1}{2}X^{\ell+1} + \frac38\mE\Bigl[\Bigl\|k\sum_{n=0}^{\ell+1}\nab\ve_{\tilde{\vu}_h}^n\Bigr\|^2\Bigr] + \frac{k^2}{4}\sum_{m=0}^{\ell}\mE\Bigl[\|\nab\ve_{\tilde{\vu}_h}^{m+1}\|^2\Bigr]\\\nonumber
	&\qquad + \frac{3k}{8}\mE\Bigl[\Bigl\|k\sum_{n=0}^{\ell+1}\nab\veps_{p_h}^n\Bigr\|^2\Bigr] + \frac{3k^3}{8}\sum_{m=0}^{\ell}\mE\bigl[\|\nab\veps_{p_h}^{m+1}\|^2\bigr]\\\nonumber
	&\qquad + k^3\sum_{m=0}^{\ell}\mE\bigl[\|\nab\cP_h^1\veps_{p_h}^{m+1}\|^2\bigr]\\\nonumber
	&\leq Ck^{\frac12} + \frac{Ch^2}{k} + Ck\sum_{m=0}^{\ell} X^m. 
	%\\\nonumber
	%&\leq C\Bigl(k^{\frac12} + \frac{h^2}{k}\Bigr)\exp(Ct_{\ell}).
\end{align}
The desired error estimate \eqref{eq4.14} then follows from an application of the 
discrete Gronwall inequality  to \eqref{eq4.34}.  The proof is complete.
\end{proof}

	Next, we state an error estimate result for the pressure approximation generated by Algorithm 3
	in a time-averaged fashion. Recall that an important advantage of Chorin-type schemes is to allow the use of a pair of independent finite element spaces which are not required to satisfy a discrete inf-sup condition, a price for this advantage is to make error estimates for  
	the pressure approximations become more complicated even in the deterministic case. 
	The idea for circumventing the difficulty is to utilize the following perturbed 
	inf-sup inequality  (cf. \cite{hughes}): there exists $\delta >0$ independent of $h>0$, 
	such that   
	\begin{equation}\label{lbb_disk}
		\frac{1}{\delta^2} \Vert q_h\Vert^2 \leq \sup_{{\bf v}_h \in {\vH}_{h}}
		\frac{\vert(q_h, {\rm div}\, {\bf v}_h)\vert^2}{\Vert \nabla {\bf v}_h\Vert^2}
		+ h^2 \Vert \nabla q_h\Vert^2 \qquad \forall\, q_h \in S_{h}\, ,
	\end{equation} 
	which was also used in \cite{Feng1} to derive an error estimate for a pressure-stabilization 
	scheme for \eqref{eq1.1}. 
	
	\begin{theorem}\label{them4.2} 
		Under the assumptions of Theorem \ref{them4.1}, there exists a positive constant $C \equiv C(D_T, \vu_0, \vf, \delta)$ such that
		\begin{align}\label{eq_4.37}
			\bigg(\mE\bigg[k\sum_{m=0}^M\Bigl\|k\sum_{m=1}^m \bigl( p^n -p_h^n \bigr)  \Bigr\|^2\bigg]\bigg)^{\frac12} \leq C\Bigl(k^{\frac14} \,+\,  h k^{-\frac12} \Bigr),
		\end{align}
	\end{theorem}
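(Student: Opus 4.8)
The plan is to mimic the structure of Theorem \ref{thm3.4}, but with the continuous inf-sup condition replaced by the perturbed \emph{discrete} inf-sup inequality \eqref{lbb_disk}, and with the finite element projection errors tracked explicitly. Writing $\veps_{p_h}^n = p^n - p_h^n$, I would first split it as $\veps_{p_h}^n = \xi^n + \theta^n$, where $\xi^n = p^n - \cP_h^1 p^n$ is the $\cP_h^1$-projection error and $\theta^n = \cP_h^1 p^n - p_h^n \in L_h \subset S_h$ is the genuinely discrete part. Since $k\sum_{n=1}^m \cP_h^1 p^n = \cP_h^1\bigl(k\sum_{n=1}^m p^n\bigr)$, the accumulated projection part is controlled directly: the $L^2$-estimate underlying \eqref{eq4.7} combined with the stability bound \eqref{eq3.56} gives $\mE\bigl[k\sum_{m}\|k\sum_{n}\xi^n\|^2\bigr]\le Ch^2$, which is of higher order than the claimed rate. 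It therefore suffices to estimate the time average of the discrete part $q_h^m := k\sum_{n=1}^m \theta^n$.

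Next I would apply \eqref{lbb_disk} with $q_h = q_h^m$, reducing the task to bounding $\sup_{\vv_h}|(q_h^m,\div\vv_h)|^2/\|\nab\vv_h\|^2$ and the perturbation term $h^2\|\nab q_h^m\|^2$. For the supremum I would use the summed error equation \eqref{eq4.16}: integrating the pressure-gradient term by parts (the periodic boundary conditions kill all boundary contributions) yields an expression for $\bigl(k\sum_n\veps_{p_h}^n,\div\vv_h\bigr)$ in terms of $\ve_{\tilde{\vu}_h}^{m+1}$, the accumulated velocity-gradient error $k\sum_n\nab\ve_{\tilde{\vu}_h}^n$, and the accumulated noise difference $\sum_n(\vB(\tilde{\vu}^n)-\vB(\tilde{\vu}_h^n))\Delta W_{n+1}$. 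Writing $(q_h^m,\div\vv_h)=\bigl(k\sum_n\veps_{p_h}^n,\div\vv_h\bigr)-\bigl(k\sum_n\xi^n,\div\vv_h\bigr)$, then applying Schwarz and Poincar\'e and dividing by $\|\nab\vv_h\|$, the supremum is bounded by $C\|\ve_{\tilde{\vu}_h}^{m+1}\| + \|k\sum_n\nab\ve_{\tilde{\vu}_h}^n\| + C\|k\sum_n\xi^n\| + \|\sum_n(\vB(\tilde{\vu}^n)-\vB(\tilde{\vu}_h^n))\Delta W_{n+1}\|$. Squaring, applying $\mE[k\sum_m(\cdot)]$, and invoking the velocity error estimate of Theorem \ref{them4.1} for the first two terms, the It\^o isometry together with Theorem \ref{them4.1} for the noise term, and the projection bound above for the $\xi^n$ term, controls this contribution by $C(k^{1/2}+h^2k^{-1})$.

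The remaining perturbation term $h^2\,\mE\bigl[k\sum_m\|\nab q_h^m\|^2\bigr]$ is the delicate point and the main obstacle. Here $\nab q_h^m = k\sum_n\nab\veps_{p_h}^n - k\sum_n\nab\xi^n$, so I would estimate it by reusing the bound on $\mE\bigl[\|k\sum_n\nab\veps_{p_h}^n\|^2\bigr]$ already produced inside the proof of Theorem \ref{them4.1} (it appears on the left-hand side of \eqref{eq4.34} after the discrete Gronwall step), which grows like $k^{-1}(k^{1/2}+h^2k^{-1})=k^{-1/2}+h^2k^{-2}$, together with the $H^1$-projection estimate for $\xi^n$. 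Multiplying by $h^2$ gives a contribution of order $h^2k^{-1/2}+h^4k^{-2}$, which in the regime $hk^{-1/2}\le 1$ is dominated by $h^2k^{-1}$. Combining the two contributions, taking $\mE[k\sum_m]$ and then square roots, yields the asserted bound $C(k^{1/4}+hk^{-1/2})$. The crux is precisely this perturbation term: it is where the spatial pressure error inherits the $k^{-1/2}$ growth factor of the Chorin pressure instability, and the argument hinges on verifying that, after the factor $h^2$, its contribution remains no larger than $hk^{-1/2}$.
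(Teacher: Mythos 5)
Your proposal follows essentially the same route as the paper: apply the perturbed inf--sup inequality \eqref{lbb_disk} to the accumulated pressure error, bound the resulting supremum via the summed error equation \eqref{eq4.16} (Schwarz, Poincar\'e, It\^o isometry, and the velocity error estimate of Theorem \ref{them4.1}), and absorb the perturbation term $h^2\|\nabla(\cdot)\|^2$ separately. Two local deviations are worth noting. First, your splitting $\veps_{p_h}^n=\xi^n+\theta^n$ so that \eqref{lbb_disk} is applied to the genuinely discrete function $k\sum_n\theta^n\in L_h$ is more careful than the paper, which applies \eqref{lbb_disk} directly to $k\sum_n\veps_{p_h}^n$ even though $p^n\notin S_h$; your extra $\xi^n$ terms are indeed of order $h$ by the duality bound $\|k\sum_n\xi^n\|\le Ch\|k\sum_n\nabla p^n\|$ and \eqref{eq3.56}, so this costs nothing. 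Second, and this is the one genuine weakness: for the perturbation term you propose to recycle the Gronwall output of \eqref{eq4.34}, which gives $\mE[\|k\sum_n\nabla\veps_{p_h}^n\|^2]\le Ck^{-1}(k^{1/2}+h^2k^{-1})$ and hence a contribution $h^2k^{-1/2}+h^4k^{-2}$; the second piece is dominated by $h^2k^{-1}$ only under the mesh condition $hk^{-1/2}\lesssim 1$, which the theorem does not assume. The paper avoids this entirely by bounding the perturbation term directly with the discrete Jensen inequality and the \emph{stability} estimates \eqref{eq3.5b} and \eqref{eq4.14b}, i.e.\ $h^2\,\mE[k\sum_m\|k\sum_n\nabla\veps_{p_h}^n\|^2]\le Ch^2\,\mE[k\sum_n\|\nabla\veps_{p_h}^n\|^2]\le Ch^2k^{-1}$ unconditionally. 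You should replace your Gronwall-based bound by this direct one (or, in your decomposition, bound $\|\nabla q_h^m\|$ by $k\sum_n\|\nabla\veps_{p_h}^n\|+k\sum_n\|\nabla\xi^n\|$ and use the same stability estimates); with that substitution the argument is complete and matches the paper's.
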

	
	\begin{proof} 
		We reuse all the notations from the proof of Theorem \ref{them4.1}. First, from the error equations \eqref{eq4.16} we have
		\begin{align}\label{eq4.37}
			\Bigl(k\sum_{n=0}^m\veps_{p_h}^n,\div \vv_h\Bigr) &= \bigl(\ve_{\tilde{\vu}_h}^{m+1},\vv_h\bigr) + \Bigl(k\sum_{n=0}^{m+1}\nab\ve_{\tilde{\vu}_h}^n,\nab\vv_h\Bigr) \\\nonumber
			&\qquad- \Bigl(\sum_{n=0}^m(\vB(\tilde{\vu}^n) - \vB(\tilde{\vu}_h^n))\Delta W_{n+1},\vv_h\Bigr) \,\,\forall \vv_h\in \vH_{h}.
		\end{align}
	Using the Schwarz inequality on the right-hand side of \eqref{eq4.37} yields 
	\begin{align}\label{eq4.388}
		\Bigl|\Bigl(k\sum_{n=0}^m\veps_{p_h}^n,\div \vv_h\Bigr)\Bigr| &= C\|\ve_{\tilde{\vu}_h}^{m+1}\|\|\nab\vv_h\| + \Bigl\|k\sum_{n=0}^{m+1}\nab\ve_{\tilde{\vu}_h}^n\Bigr\|\|\nab\vv_h\| \\\nonumber
		&\quad+ C\Bigl\|\sum_{n=0}^m(\vB(\tilde{\vu}^n) - \vB(\tilde{\vu}_h^n))\Delta W_{n+1}\Bigr\|\|\nab\vv_h\|.
	\end{align}
Next, using \eqref{lbb_disk} we conclude that
\begin{align}\label{eq4.399}
	\frac{1}{\delta^2}\Bigl\|k\sum_{n=0}^m\veps_{p_h}^n\Bigr\|^2 - h^2\Bigl\|k\sum_{n=0}^m\nab\veps_{p_h}^n\Bigr\|^2 &\leq \sup_{{\bf v}_h \in {\vH}_{h}}\frac{\Bigl|\Bigl(k\sum_{n=0}^m\veps_{p_h}^n,\div \vv_h\Bigr)\Bigr|^2}{\|\nab\vv_h\|^2}\\\nonumber
	&\leq C\|\ve_{\tilde{\vu}_h}^{m+1}\|^2 + C\Bigl\|k\sum_{n=0}^{m+1}\nab\ve_{\tilde{\vu}_h}^{n}\Bigr\|^2\\\nonumber
	& + C\Bigl\|\sum_{n=0}^m(\vB(\tilde{\vu}^n) - \vB(\tilde{\vu}_h^n))\Delta W_{n+1}\Bigr\|^2.
\end{align}
Then, applying operators $k\sum_{m=0}^{\ell}$ and $\mE[\cdot]$ on both sides we obtain
\begin{align}\label{eq4.40}
	\frac{1}{\delta^2}\mE\biggl[k\sum_{m=0}^{\ell}\Bigl\|k\sum_{n=0}^m\veps_{p_h}^n\Bigr\|^2\biggr] &\leq h^2 \mE\biggl[k\sum_{m=0}^{\ell}\Bigl\|k\sum_{n=0}^m\nab\veps_{p_h}^n\Bigr\|^2\biggr] \\\nonumber
	&\quad + Ck\sum_{m=0}^{\ell}\mE\bigl[\|\ve_{\tilde{\vu}_h}^{m+1}\|^2\bigr] \\\nonumber
	&\quad + Ck\sum_{m=0}^{\ell}\mE\Bigl[\Bigl\|k\sum_{n=0}^{m+1}\nab\ve_{\tilde{\vu}_h}^n\Bigr\|^2\Bigr]\\\nonumber
	&\quad + Ck\sum_{m=0}^{\ell}\mE\Bigl[\Bigl\|\sum_{n=0}^m(\vB(\tilde{\vu}^n) - \vB(\tilde{\vu}_h^n))\Delta W_{n+1}\Bigr\|^2\Bigr]\\\nonumber
	&\quad := {\tt I + II + III + IV}.
\end{align}
We now bound each terms on the right-hand side of \eqref{eq4.40}. By using the discrete Jensen inequality and the stability estimates from \eqref{eq3.5b} and\eqref{eq4.14b} we get
\begin{align}
	{\tt I} &= h^2 \mE\biggl[k\sum_{m=0}^{\ell}\Bigl\|k\sum_{n=0}^m\nab\veps_{p_h}^n\Bigr\|^2\biggr] \leq h^2 \mE\biggl[k\sum_{m=0}^{\ell}k\sum_{n=0}^m\|\nab\veps_{p_h}^n\|^2\biggr]\leq \frac{Ch^2}{k}.
\end{align}
Using Theorem \ref{them4.1}, terms {\tt II} and {\tt III} can be bounded as follows:
\begin{align}
	{\tt II + III} &= Ck\sum_{m=0}^{\ell}\mE\bigl[\|\ve_{\tilde{\vu}_h}^{m+1}\|^2\bigr] + Ck\sum_{m=0}^{\ell}\mE\biggl[\biggl\|k\sum_{n=0}^{m+1}\nab\ve_{\tilde{\vu}_h}^n\biggr\|^2\biggr] \\\nonumber
	&\leq C\Bigl(k^{\frac12} + \frac{Ch^2}{k}\Bigr).
\end{align}
Finally, using It\^o's isometry and Theorem \ref{them4.1} we have
\begin{align}
	{\tt IV} &= Ck\sum_{m=0}^{\ell}\mE\biggl[\biggl\|\sum_{n=0}^m(\vB(\tilde{\vu}^n) - \vB(\tilde{\vu}_h^n))\Delta W_{n+1}\biggr\|^2\biggr]\\\nonumber
	&= Ck\sum_{m=0}^{\ell}\mE\biggl[k\sum_{n=0}^m\|\vB(\tilde{\vu}^n) - \vB(\tilde{\vu}_h^n)\|^2\biggr]\\\nonumber
	&\leq Ck\sum_{m=0}^{\ell}\mE\biggl[k\sum_{n=0}^m\|\ve_{\tilde{\vu}_h}^n\|^2\biggr]\\\nonumber
	&\leq C\Bigl(k^{\frac12} + \frac{Ch^2}{k}\Bigr).
\end{align}
The proof is complete after combining the above estimates.
\end{proof}
	
	We are now ready to state the following global error estimate theorem for Algorithm 3
	which is a main result of this paper.
	
	\begin{theorem}\label{thm4.3} 
		Under the assumptions of Theorems \ref{thm3.3}, \ref{thm3.4} and Theorems \ref{them4.1} and \ref{them4.2},  there hold the following error estimates:
		\begin{align}\label{eq4.38}
		\biggl(\mE\Bigl[k\sum_{m=0}^M\|\vu(t_m) - \tilde{\vu}^m_h\|^2\Bigr]\biggr)^{\frac12} &+ \max_{0\leq \ell \leq M} \biggl(\mE\Bigl[\Bigl\|k\sum_{m=0}^{\ell}\nab(\vu(t_m) - \tilde{\vu}^m_h)\Bigr\|^2\Bigr]\biggr)^{\frac12}\\\nonumber &\qquad\leq C\Bigl(k^{\frac14} +  h k^{-\frac12} \Bigr),\\
		\biggl(\mE\Bigl[k\sum_{m=0}^M\Bigl\|P(t_m) - k\sum_{n=0}^m p^n_h&\Bigr\|^2\Bigr]\biggr)^{\frac12} \leq C\Bigl(k^{\frac14} +  h k^{-\frac12} \Bigr),
		\end{align}
		where $C \equiv C(D_T, \vu_0, \vf, \beta,\delta)$ is positive constant independent of $k$ and $h$.
	\end{theorem}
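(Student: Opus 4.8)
The plan is to derive both estimates by a triangle-inequality decomposition that splits each global error into a temporal semidiscretization error (exact solution versus Algorithm 1) and a spatial discretization error (Algorithm 1 versus Algorithm 3), and then to invoke the four cited theorems to bound the two pieces separately. For the velocity I would write
\begin{align*}
\vu(t_m) - \tilde{\vu}_h^m = \bigl(\vu(t_m) - \tilde{\vu}^m\bigr) + \bigl(\tilde{\vu}^m - \tilde{\vu}_h^m\bigr),
\end{align*}
together with the analogous identity for $\nab(\vu(t_m)-\tilde{\vu}_h^m)$ inside the $\max_\ell$, time-summed norm. Applying Minkowski's inequality in the $L^2(\Ome)$-sense, namely $\bigl(\mE[\|a+b\|^2]\bigr)^{1/2}\le\bigl(\mE[\|a\|^2]\bigr)^{1/2}+\bigl(\mE[\|b\|^2]\bigr)^{1/2}$, to each of the two norms on the left of the theorem, the first summand is controlled by Theorem \ref{thm3.3}, which furnishes $Ck^{1/4}$, while the second is controlled by Theorem \ref{them4.1}, which furnishes $C(k^{1/4}+hk^{-1/2})$; adding these yields the first claimed bound.

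For the pressure the same idea applies to the time-averaged quantity
\begin{align*}
P(t_m) - k\sum_{n=0}^m p_h^n = \Bigl(P(t_m) - k\sum_{n=0}^m p^n\Bigr) + k\sum_{n=0}^m\bigl(p^n - p_h^n\bigr).
\end{align*}
After applying the outer $\bigl(\mE[k\sum_m \|\cdot\|^2]\bigr)^{1/2}$ norm together with the triangle and Minkowski inequalities, the first term is estimated by Theorem \ref{thm3.4} (bound $Ck^{1/4}$) and the second by Theorem \ref{them4.2} (bound $C(k^{1/4}+hk^{-1/2})$); combining the two produces the second claimed bound.

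The only bookkeeping point worth noting is the alignment of the summation ranges: Theorem \ref{them4.1} states the gradient estimate with inner sum $k\sum_{n=1}^{\ell}$ under the normalization $\ve_{\tilde{\vu}_h}^0=0$, whereas Theorem \ref{thm3.3} uses $k\sum_{n=0}^{\ell}$. Since $\tilde{\vu}^0=\vu(t_0)=\vu_0$, the $m=0$ term contributes nothing to the temporal velocity error, so the two ranges are mutually consistent and no spurious boundary term appears. I do not anticipate a genuine obstacle here: the theorem is a clean assembly of the already-established semidiscrete estimates (Theorems \ref{thm3.3} and \ref{thm3.4}) and spatial estimates (Theorems \ref{them4.1} and \ref{them4.2}), so the remaining work is merely to collect the contributions and absorb constants, with the final $C$ now inheriting the dependence on $\beta$ and $\delta$ through Theorems \ref{thm3.4} and \ref{them4.2}.
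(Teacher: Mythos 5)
Your proposal is correct and is exactly the argument the paper intends: Theorem \ref{thm4.3} is stated without a written proof precisely because it is the triangle-inequality assembly of the semidiscrete estimates (Theorems \ref{thm3.3} and \ref{thm3.4}) with the spatial estimates (Theorems \ref{them4.1} and \ref{them4.2}), which is what you carry out. Your remark on reconciling the summation ranges via $\tilde{\vu}^0=\vu_0$ and $\ve_{\tilde{\vu}_h}^0=0$ is the only detail worth recording, and you handle it correctly.
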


	%%%%%%%%%
	\subsection{Finite element methods for the modified Chorin scheme} 
	In this subsection, we first formulate a finite element spatial discretization for 
	Algorithm 2 and then present a complete convergence analysis by deriving error 
	estimates which are stronger than those obtained above  for the standard Chorin scheme.

	\medskip
	\noindent
	{\bf Algorithm 4} 
	
	Let $m\geq 0$. Set $\tilde{\vu}_h^0 = \cP_h^0\vu_0$. For $m=0,1,2,\cdots$ do the following steps:
	\smallskip
	
	\smallskip
	{\em Step 1:} For given $\tilde{\vu}^m_h \in L^2(\Ome,\vH_h)$, find $\xi^m_h \in L^2(\Ome,S_h)$ by solving the 
	following Poisson problem: for $\mP$-a.s.
	\begin{align}\label{eq_4.7}
		\big(\nab\xi^m_h,\nab\phi_h\big) = \big(\vB(\tilde{\vu}^m_h),\nab\phi_h\big) \qquad \forall \phi_h \in S_h.
	\end{align}
	
	{\em Step 2:} Set \, $\pmb{\eta}^m_{\tilde{\vu}_{h}} = \vB(\tilde{\vu}^m_h) - \nab\xi^m_h$. For given $\vu^m_h \in L^2(\Ome,\vH_h)$ and $\tilde{\vu}^m_h \in L^2(\Ome,\vH_h)$, find $\tilde{\vu}^{m+1}_h \in L^2(\Ome,\vH_h)$  by solving the following problem: for $\mP$-a.s.
	\begin{align}\label{eq_4.8}
		\big(\tilde{\vu}^{m+1}_h,\vv_h\big) &+ k \big(\nab\tilde{\vu}^{m+1}_h,\nab\vv_h\big) \\\nonumber
		& = \big(\vu^m_h,\vv_h\big)  + k\big(\vf^{m+1},\vv_h\big) + \big(\pmb{\eta}^m_{\tilde{\vu}_{h}}\Delta W_{m+1},\vv_h\big)\qquad\forall\, \vv_h \in \vH_h.
	\end{align}
	
	\smallskip
	{\em Step 3:} Find $r^{m+1}_{h} \in L^2(\Ome,L_h)$ by solving the following Poisson problem: for $\mP$-a.s.
	\begin{align}\label{eq_4.9}
		\big(\nab r^{m+1}_h,\nab\phi_h\big) &= \frac{1}{k}\big( \tilde{\vu}^{m+1}_h,\,\nab\phi_h\big)\qquad \forall \phi_h \in L_h.
	\end{align}
	
	\smallskip
	{\em Step 4:} Define $\vu^{m+1}_h \in L^2(\Ome, \vH_h)$ by 
	\begin{align}\label{eq_4.10}
		%\big(\vu^{m+1}_h,\,\pmb{\psi}_h\big) &= \big(\tilde{\vu}^{m+1}_h,\,\pmb{\psi}_h\big) - k\big( \nab r^{m+1}_h,\pmb{\psi}_h\big) \qquad \forall \pmb{\psi}_h \in \vH_h.
		\vu^{m+1}_h = \tilde{\vu}^{m+1}_h -k \nab r^{m+1}_h. 
	\end{align}
	
	\smallskip
	{\em Step 5: } Define $p^{m+1}_{h} \in L^2(\Ome,L_h)$ by
	\begin{align}\label{eq_4.11}
		p^{m+1}_h = r^{m+1}_h + \frac{1}{k}\xi^{m}_h\Delta W_{m+1}. 
	\end{align}	
	
	Since each step involves a coercive problem, hence,  
	Algorithm 4 is well defined. The next theorem establishes a convergence rate 
	for the finite element approximation of the velocity field. Since the proof 
	follows the same lines as those in the proof of Theorem \ref{them4.1}, we omit it
	to save space. 
	%We also note that a similar result  was mentioned in \cite{CHP12} without a proof. 

	\begin{theorem}\label{thm4.1} 
		Let $\{\tilde{\vu}^m\}_{m=0}^M $ and $\{\tilde{\vu}_h^m\}_{m=0}^M$ be generated respectively by Algorithm 2 and 4. 
		%Assume that $\|\tilde{\vu}^0 - \tilde{\vu}_h^0\|_{L^2}^2 \leq C\,h^2$. 
		Then, there exists a constant $C \equiv C(D_T, \vu_0, \vf) > 0$ such that
		\begin{align}
			\max_{1\leq m \leq M}\biggl(\mE\bigl[\|\tilde{\vu}^m - \tilde{\vu}_h^m\|^2 \bigl]\biggr)^{\frac12} 
			&+ \biggl(\mE\biggl[k\sum_{m=1}^M \|\nab(\tilde{\vu}^m - \tilde{\vu}_h^m)\|^2\biggr]\biggr)^{\frac12} \\\nonumber
			&\leq C\biggl(\sqrt{k} + h +  h^2 k^{-\frac12} \biggr).
		\end{align}
		
	\end{theorem}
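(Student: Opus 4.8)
The plan is to follow the proof of Theorem~\ref{them4.1} almost verbatim, with the standard pair \eqref{eq4.11} replaced by the modified pair (Algorithm~2 and Algorithm~4), the pressure $p$ replaced by the pseudo-pressure $r$, and the velocity telescoped directly as in Theorem~\ref{thm3.8} so that the \emph{standard} norms $\max_m\mE[\|\cdot\|^2]$ and $\mE[k\sum\|\nab\cdot\|^2]$ appear on the left. The decisive gain over Theorem~\ref{them4.1} is that the Helmholtz-projected noise $\pmb{\eta}^m_{\tilde{\vu}_h}$ is discretely divergence-free, so $r^m$ and $r^m_h$ satisfy the \emph{uniform} (in $k$) bounds \eqref{eq_3.87b} rather than the $O(k^{-1})$ bounds of the standard scheme; this is what upgrades $k^{1/4}$ to $\sqrt k$ and $hk^{-1/2}$ to $h+h^2k^{-1/2}$. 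I would set $\ve_{\tilde{\vu}_h}^m:=\tilde{\vu}^m-\tilde{\vu}_h^m$, $\veps_{r_h}^m:=r^m-r_h^m$, and introduce the projection errors $\ta^m:=\tilde{\vu}^m-\cP_h^0\tilde{\vu}^m$ and $\chi^m:=r^m-\cP_h^1 r^m$. Testing the weak form of Algorithm~2 against functions in $\vH_h,L_h$ and subtracting \eqref{eq_4.8}--\eqref{eq_4.9} gives the error system
\begin{align*}
(\ve_{\tilde{\vu}_h}^{m+1}-\ve_{\tilde{\vu}_h}^m,\vv_h)+k(\nab\ve_{\tilde{\vu}_h}^{m+1},\nab\vv_h)+k(\nab\veps_{r_h}^m,\vv_h)
&=\bigl((\pmb{\eta}^m_{\tilde{\vu}}-\pmb{\eta}^m_{\tilde{\vu}_h})\Delta W_{m+1},\vv_h\bigr),\\
(\ve_{\tilde{\vu}_h}^{m+1},\nab\phi_h)&=k(\nab\veps_{r_h}^{m+1},\nab\phi_h),
\end{align*}
for all $\vv_h\in\vH_h$, $\phi_h\in L_h$.

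I would then test the momentum error equation with $\vv_h=\cP_h^0\ve_{\tilde{\vu}_h}^{m+1}=\ve_{\tilde{\vu}_h}^{m+1}-\ta^{m+1}$, which telescopes the first two terms into $\tfrac12(\|\ve_{\tilde{\vu}_h}^{m+1}\|^2-\|\ve_{\tilde{\vu}_h}^m\|^2+\|\ve_{\tilde{\vu}_h}^{m+1}-\ve_{\tilde{\vu}_h}^m\|^2)$ and $k\|\nab\ve_{\tilde{\vu}_h}^{m+1}\|^2$ up to projection remainders in $\ta^{m+1}$. The pressure coupling $k(\nab\veps_{r_h}^m,\cP_h^0\ve_{\tilde{\vu}_h}^{m+1})$ is rewritten exactly as in \eqref{eq4.18}--\eqref{eq4.22}: inserting $\phi_h=\cP_h^1\veps_{r_h}^m$ in the mass equation and using $\veps_{r_h}^m-\cP_h^1\veps_{r_h}^m=\chi^m$ (since $r_h^m\in L_h$) turns it into a telescoping term $k^2(\nab\veps_{r_h}^{m+1},\nab\veps_{r_h}^m)$ plus remainders governed by $\chi^m$ and $\ta^{m+1}$. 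Applying $2a(a-b)=a^2-b^2+(a-b)^2$, summing $\sum_{m=0}^\ell$ and taking $\mE[\cdot]$ produces an inequality whose left side carries $\mE[\|\ve_{\tilde{\vu}_h}^{\ell+1}\|^2]$, $\mE[k\sum\|\nab\ve_{\tilde{\vu}_h}^{m+1}\|^2]$ and $k^2\mE[\|\nab\veps_{r_h}^{\ell+1}\|^2]$, and whose right side is a sum of terms of the same type as {\tt I}--{\tt VI} in \eqref{eq4.25}.

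The bookkeeping then yields the three advertised powers. The pressure-projection remainder $k^2(\nab\veps_{r_h}^{m+1},\nab\chi^m)$, after absorbing $\tfrac14 k^2\|\nab\veps_{r_h}^{m+1}\|^2$, leaves $k\cdot\mE[k\sum\|\nab\chi^m\|^2]$; since $\|\nab\chi^m\|\le\|\nab r^m\|$ is the only bound available (the pseudo-pressure $r$ has no $H^2$ regularity) and $\mE[k\sum\|\nab r^m\|^2]\le C$ by \eqref{eq_3.87b}, this is $O(k)$, i.e. the $\sqrt k$ term after taking square roots. The energy-level velocity remainders $k(\nab\ve_{\tilde{\vu}_h}^{m+1},\nab\ta^{m+1})$ use $\|\nab\ta^m\|\le Ch\|\Delta\tilde{\vu}^m\|$ from \eqref{eq4.6} and the uniform bound $\mE[k\sum\|\Delta\tilde{\vu}^m\|^2]\le C$ of \eqref{eq_3.87}, giving $O(h^2)$, i.e. the $h$ term; the $L^2$-level remainder $(\ve_{\tilde{\vu}_h}^{m+1}-\ve_{\tilde{\vu}_h}^m,\ta^{m+1})$ forces the unweighted sum $\sum\|\ta^{m+1}\|^2=k^{-1}\,\mE[k\sum\|\ta^{m+1}\|^2]\le Ch^4k^{-1}$, i.e. the $h^2k^{-1/2}$ term.

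The main obstacle, genuinely new relative to Theorem~\ref{them4.1} (where the noise enters through $\vB$ directly), is that the noise now enters through the Helmholtz-projected field $\pmb{\eta}$, so the \emph{continuous} projection of Algorithm~2 must be compared with the \emph{discrete} projection of Algorithm~4. After It\^o isometry the noise term reduces to $Ck\sum_m\mE[\|\pmb{\eta}^m_{\tilde{\vu}}-\pmb{\eta}^m_{\tilde{\vu}_h}\|^2]$, and writing $\pmb{\eta}^m_{\tilde{\vu}}-\pmb{\eta}^m_{\tilde{\vu}_h}=(\vB(\tilde{\vu}^m)-\vB(\tilde{\vu}_h^m))-\nab(\xi^m-\xi_h^m)$ I must estimate the Poisson/Helmholtz potential error $\|\nab(\xi^m-\xi_h^m)\|$. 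Subtracting \eqref{eq_4.7} from \eqref{eq_3.1} tested on $\phi_h$ and inserting $\cP_h^1\xi^m$ gives $\|\nab(\xi^m-\xi_h^m)\|\le\|\nab(\xi^m-\cP_h^1\xi^m)\|+C\|\ve_{\tilde{\vu}_h}^m\|$ by the Lipschitz bound \eqref{eq2.6a}: the second piece feeds the Gronwall sum $Ck\sum_m X^m$ with $X^m:=k\sum\mE[\|\ve_{\tilde{\vu}_h}^m\|^2]$, while the first is the finite element projection error of the Helmholtz potential, which is $O(h)$ (via \eqref{eq4.7}) under the elliptic regularity available in the periodic setting. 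Collecting everything into an inequality of the form $\tfrac12\mE[\|\ve_{\tilde{\vu}_h}^{\ell+1}\|^2]+\mE[k\sum\|\nab\ve_{\tilde{\vu}_h}^{m+1}\|^2]+\cdots\le C(k+h^2+h^4k^{-1})+Ck\sum_m X^m$ and invoking the discrete Gronwall inequality, then taking square roots, delivers the claimed estimate $C(\sqrt k+h+h^2k^{-1/2})$.
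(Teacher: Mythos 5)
Your proposal is correct and is essentially the proof the paper intends: the paper omits the argument precisely because it "follows the same lines as Theorem \ref{them4.1}," and you reproduce that fully discrete machinery with the right adaptations — direct telescoping as in Theorem \ref{thm3.8} to get the pointwise-in-time and energy norms on the left, the uniform (in $k$) stability bounds \eqref{eq_3.87x} in place of the $O(k^{-1})$ bounds to upgrade $k^{1/4}$ to $\sqrt{k}$ and $hk^{-1/2}$ to $h+h^2k^{-1/2}$, and the continuous-versus-discrete Helmholtz potential comparison handled exactly as the paper does in \eqref{eq4.27} and \eqref{eq_4.35}. The only blemish is notational: in your final Gronwall inequality the quantity $X^m$ should be $\mE[\|\ve_{\tilde{\vu}_h}^m\|^2]$ itself (not the $k$-weighted partial sum) to match the telescoped left-hand side, but this does not affect the argument.
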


	In the next theorem, we establish an error estimate for the pressure approximation of the  modified Chorin finite element method given by Algorithm 4.
	
	\begin{theorem}\label{thm4.2} 
		Let $\{r^m\}_{m=1}^M$ and $\{r_h^m\}_{m=1}^M$ be generated respectively by Algorithm 2 and 4. Then, there exists a constant $C \equiv C(D_T,\vu_0,\vf, \delta)>0$ such that
		\begin{align}\label{eq4.52}
			\biggl(\mE\biggl[\biggl\|k\sum_{m=1}^{M}(r^m - r^m_h)\biggr\|^2\biggr]\biggr)^{\frac12} \leq C\biggl(\sqrt{k} + h +   h^2 k^{-\frac12}\biggr).
		\end{align}	
	\end{theorem}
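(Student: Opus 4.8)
The plan is to follow the template of the proof of Theorem \ref{them4.2} for the standard Chorin scheme, with the perturbed inf--sup inequality \eqref{lbb_disk} again playing the central role; the single genuinely new ingredient is the control of the \emph{discrete} Helmholtz-projected noise. Write $\ve_{\tilde{\vu}_h}^m = \tilde{\vu}^m - \tilde{\vu}_h^m$ and $\veps_{r_h}^m = r^m - r_h^m$. First I would sum the velocity step \eqref{eq_4.8} over $n$ and subtract the analogously summed velocity equation of the semi-discrete Algorithm 2, producing a velocity error equation of exactly the same shape as \eqref{eq4.16}, but with the noise increment $(\vB(\tilde{\vu}^n)-\vB(\tilde{\vu}_h^n))\Delta W_{n+1}$ replaced by $(\pmb{\eta}^n_{\tilde{\vu}}-\pmb{\eta}^n_{\tilde{\vu}_h})\Delta W_{n+1}$. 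Integrating the pressure-gradient term by parts (periodicity removes the boundary contributions), one obtains for every $\vv_h\in\vH_h$ the analogue of \eqref{eq4.37},
$$
\Bigl(k\sum_{n=0}^m\veps_{r_h}^n,\div\vv_h\Bigr) = \bigl(\ve_{\tilde{\vu}_h}^{m+1},\vv_h\bigr) + k\Bigl(\sum_{n=0}^{m+1}\nab\ve_{\tilde{\vu}_h}^n,\nab\vv_h\Bigr) - \Bigl(\sum_{n=0}^m(\pmb{\eta}^n_{\tilde{\vu}}-\pmb{\eta}^n_{\tilde{\vu}_h})\Delta W_{n+1},\vv_h\Bigr).
$$

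Next I would apply the Schwarz inequality to the right-hand side and invoke \eqref{lbb_disk} with $q_h = k\sum_{n=0}^m\veps_{r_h}^n$, so that $\tfrac{1}{\delta^2}\|k\sum_{n=0}^m\veps_{r_h}^n\|^2$ is dominated by $h^2\|k\sum_{n=0}^m\nab\veps_{r_h}^n\|^2$ together with the squared velocity-error and noise quantities above. Applying $k\sum_{m=0}^\ell$ and $\mE[\cdot]$ then yields four terms, exactly as in \eqref{eq4.40}. The $h^2$-gradient term is handled by the discrete Jensen inequality together with the gradient stability estimate \eqref{eq_3.87b} for $r^m$ and its fully discrete counterpart for $r_h^m$ (available precisely because the projected noise $\pmb{\eta}_{\tilde{\vu}_h}^m$ is discretely divergence-free), contributing a term of order $h^2$. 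The two velocity-error terms are controlled directly by Theorem \ref{thm4.1}, each contributing $(\sqrt{k} + h + h^2 k^{-\frac12})^2$.

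The crux is the noise term. By the It\^o isometry it reduces to $Ck\sum_{m}\mE\bigl[k\sum_{n=0}^m\|\pmb{\eta}^n_{\tilde{\vu}}-\pmb{\eta}^n_{\tilde{\vu}_h}\|^2\bigr]$, so the whole difficulty is to estimate the gap between the exact Helmholtz projection $\pmb{\eta}^n_{\tilde{\vu}} = \vB(\tilde{\vu}^n)-\nab\xi^n$ used in Algorithm 2 and the fully discrete one $\pmb{\eta}^n_{\tilde{\vu}_h} = \vB(\tilde{\vu}_h^n)-\nab\xi_h^n$ used in Algorithm 4. I would write $\|\pmb{\eta}^n_{\tilde{\vu}}-\pmb{\eta}^n_{\tilde{\vu}_h}\| \le \|\vB(\tilde{\vu}^n)-\vB(\tilde{\vu}_h^n)\| + \|\nab(\xi^n-\xi_h^n)\|$ and introduce the intermediate Galerkin solution $\hat\xi_h^n\in S_h$ defined by $(\nab\hat\xi_h^n,\nab\phi_h)=(\vB(\tilde{\vu}^n),\nab\phi_h)$ for all $\phi_h\in S_h$. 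The difference $\hat\xi_h^n-\xi_h^n$ solves a discrete Poisson problem with right-hand side $\vB(\tilde{\vu}^n)-\vB(\tilde{\vu}_h^n)$, so testing with itself and using the Lipschitz property \eqref{eq2.6a} gives $\|\nab(\hat\xi_h^n-\xi_h^n)\| \le \|\vB(\tilde{\vu}^n)-\vB(\tilde{\vu}_h^n)\| \le C\|\ve_{\tilde{\vu}_h}^n\|$, whereas $\|\nab(\xi^n-\hat\xi_h^n)\|$ is the genuine finite element error of the Poisson solve, estimated via \eqref{eq4.7} and elliptic regularity by $O(h\|\xi^n\|_{H^2})$.

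Combining these, $\|\pmb{\eta}^n_{\tilde{\vu}}-\pmb{\eta}^n_{\tilde{\vu}_h}\|^2 \le C\|\ve_{\tilde{\vu}_h}^n\|^2 + Ch^2\|\xi^n\|_{H^2}^2$: the velocity part feeds into Theorem \ref{thm4.1} and the $h^2$ part into a uniform regularity bound for $\xi^n$ inherited from the $\vH^1_{per}(D)$-stability of $\tilde{\vu}^n$ in \eqref{eq_3.87b}. I expect this Helmholtz-projection consistency step to be the main obstacle, since it is the only place where the finite element error of the auxiliary Poisson problem enters and where the $H^2$-regularity of $\xi^n$ must be justified from the regularity of $\vB(\tilde{\vu}^n)$. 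Collecting the bounds for all four terms and taking a square root then yields the claimed rate $C\bigl(\sqrt{k} + h + h^2 k^{-\frac12}\bigr)$.
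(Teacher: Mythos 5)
Your proposal follows essentially the same route as the paper: sum the velocity error equation, apply the perturbed inf--sup inequality \eqref{lbb_disk}, use the It\^o isometry on the noise term, absorb the $h^2$-gradient term via the stability of $\nab r^m$ and $\nab r^m_h$, and invoke Theorem \ref{thm4.1}; your intermediate-Galerkin-solution argument for $\|\pmb{\eta}^n_{\tilde{\vu}}-\pmb{\eta}^n_{\tilde{\vu}_h}\|$ simply supplies the details behind the paper's bound \eqref{eq_4.35} (the same computation the paper spells out later in \eqref{eq4.27}). The only cosmetic difference is your extra application of $k\sum_{m=0}^{\ell}$, which proves a time-averaged variant rather than the stated fixed-$\ell$ bound, but since the estimate is uniform in $\ell$ this is immaterial.
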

	
	\begin{proof} 
		Let $\e_{\tilde{\vu}}^m = \tilde{\vu}^m - \tilde{\vu}_h^m$ and $\varepsilon_r^m = r^m - r^m_h$. It is easy to check that $(\e_{\tilde{\vu}}^m,\varepsilon_r^m)$ satisfies the following error equation:
		\begin{align}\label{eq_4.13}
			\bigl(\e_{\tilde{\vu}}^{m+1}-\e_{\tilde{\vu}}^m,\vv_h\bigr) + k\bigl(\nab\e_{\tilde{\vu}}^{m+1},&\,\nab \vv_h\bigr) + k\bigl(\nab \varepsilon_r^m,\vv_h\bigr) \\\nonumber
			&= \bigl( (\pmb{\eta}_{\tilde{\vu}}^m - \pmb{\eta}_{\tilde{\vu}_{h}}^m)\Delta W_{m+1},\vv_h\bigr) \qquad\forall\,\vv_h \in \vH_h.
		\end{align}
		Applying the summation operator $\sum_{m=0}^{\ell} \,(0\leq \ell \leq M-1)$ 
		to \eqref{eq_4.13} yields 
		\begin{align}
			\bigl(\e_{\tilde{\vu}}^{\ell+1}-\e_{\tilde{\vu}}^0,\vv_h\bigr) &+ \biggl(k\sum_{m=0}^{\ell}\nab\e_{\tilde{\vu}}^{m+1},\nab \vv_h\biggr) + \bigl(k\sum_{m=0}^{\ell}\nab \varepsilon_r^m,\vv_h\bigr) \\\nonumber
			&= \biggl(\sum_{m=0}^{\ell}(\pmb{\eta}_{\tilde{\vu}}^m - \pmb{\eta}_{\tilde{\vu}_{h}}^m)\Delta W_{m+1},\vv_h\biggr) \qquad\forall\,\vv_h \in \vH_h.
		\end{align}
		Thus, 
		\begin{align}\label{eq_4.30}
			\biggl(k\sum_{m=0}^{\ell}\varepsilon_r^m, \div \vv_h\biggr) &= \bigl(\e_{\tilde{\vu}}^{\ell+1} - \e_{\tilde{\vu}}^0, \vv_h\bigr) + \biggl(k\sum_{m=0}^{\ell}\nab\e_{\tilde{\vu}}^{m+1},\nab \vv_h\biggr) \\ \nonumber 
			&\quad -  \biggl(\sum_{m=0}^{\ell}(\pmb{\eta}_{\tilde{\vu}}^m 
			- \pmb{\eta}_{\tilde{\vu}_{h}}^m)\Delta W_{m+1},\vv_h\biggr) 
			=:{\tt I} + {\tt II} + {\tt III}.
		\end{align}
		Using Poincar\'e and Schwarz inequalities, the three terms on the right-hand side of \eqref{eq_4.30} can be bounded as follows:
		\begin{align}\label{eq_4.31}
			|{\tt I}| &\leq C\bigl(\|\e_{\tilde{\vu}}^{\ell+1}\| + \|\e_{\tilde{\vu}}^0\| \bigr)\|\nab\vv_h\|,\\
			|{\tt II}| &\leq \biggl\|k\sum_{m=0}^{\ell}\nab\e_{\tilde{\vu}}^{m+1}\biggr\|\|\nab \vv_h\|,\\
			\label{eq_4.33}
			|{\tt III}| &\leq C\bigg\|\sum_{m=0}^{\ell}(\pmb{\eta}_{\tilde{\vu}}^m - \pmb{\eta}_{\tilde{\vu}_h}^m)\Delta W_{m+1}\bigg\| \bigl\|\nab\vv_h \bigr\|.
		\end{align}
		Using \eqref{lbb_disk} and \eqref{eq_4.31}--\eqref{eq_4.33} in \eqref{eq_4.30}, 
		we obtain 
		\begin{align}\label{eq_4.34}
			\frac{1}{\delta^2}\biggl\|k\sum_{m=0}^{\ell} \varepsilon_r^m\biggr\|^2 - h^2\biggl\|k\sum_{m=0}^{\ell} \nab\varepsilon_r^m\biggr\|^2 
			&\leq C\bigl\|\e_{\tilde{\vu}}^{\ell+1} \bigr\|^2 
			+ k\sum_{m=0}^{\ell} \bigl\|\nab\e_{\tilde{\vu}}^{m+1} \bigr\|^2 \\\nonumber
			&\quad+ C\biggl\|\sum_{m=0}^{\ell}(\pmb{\eta}_{\tilde{\vu}}^m - \pmb{\eta}_{\tilde{\vu}_h}^m)\Delta W_{m+1}\biggr\|^2.
		\end{align}
		Using it\^o isometry, the last term above can be bounded as
		\begin{align}\label{eq_4.35}
			\mE\biggl[\biggl\|\sum_{m=0}^{\ell}(\pmb{\eta}_{\tilde{\vu}}^m - \pmb{\eta}_{\tilde{\vu}_h}^m)\Delta_{m+1} W\biggr\|^2\biggr] 
			&= \mE\biggl[k\sum_{m=0}^{\ell} \bigl\|\pmb{\eta}_{\tilde{\vu}}^m - \pmb{\eta}_{\tilde{\vu}_h}^m \bigr\|^2\biggr] \\\nonumber
			&\leq C\mE\biggl[k\sum_{m=0}^{\ell}\|\e_{\tilde{\vu}}^m\|^2\biggr] + Ch^2.
		\end{align}
		Substituting \eqref{eq_4.35} into \eqref{eq_4.34} yields
		\begin{align}
			\frac{1}{\delta^2}\mE\biggl[\biggl\|k\sum_{m=0}^{\ell} \varepsilon_r^m\biggl\|^2\biggr] &\leq C\mE\bigl[\|\e_{\tilde{\vu}}^{\ell+1}\|^2 \bigr] + \mE\biggl[k\sum_{m=0}^{\ell}\|\nab\e_{\tilde{\vu}}^{m+1}\|^2\biggr] \\\nonumber
			&\quad + C\mE\biggl[k\sum_{m=0}^{\ell}\|\e_{\tilde{\vu}}^m\|^2\biggr]
			+ h^2 \mE\biggl[k\sum_{m=0}^{\ell}\|\nab\varepsilon_r^m\|^2\biggr] +  Ch^2.
		\end{align}
		Finally, the desired estimate \eqref{eq4.52} follows from Theorem \ref{thm4.1} 
		and Step 3 of Algorithm 2 and 4. The proof is complete.
	\end{proof}

	\begin{corollary}
		Let $\{p^m\}_{m=1}^M $ and $\{p_h^m\}_{m=1}^M $ be generated respectively by Algorithm 1 and  2. Then, there exists a positive constant $C \equiv C(D_T, \vu_0,\vf, \delta)$ such that
		\begin{align}
			\biggl(\mE\biggl[\biggl\|k\sum_{m=1}^M(p^m - p^m_h)\biggr\|^2\biggr]\biggr)^{\frac12} \leq C\biggl(\sqrt{k} + h +   h^2 k^{-\frac12}\biggr).
		\end{align}
	\end{corollary}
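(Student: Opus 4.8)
The plan is to reduce the pressure error $p^m-p_h^m$ to the already-proved estimate for the pseudo-pressure error $r^m-r_h^m$ from Theorem~\ref{thm4.2}, isolating the only genuinely new contribution, which comes from the curl-free corrector in Step~5 of the two algorithms. Starting from \eqref{eq_3.5} and \eqref{eq_4.11}, namely $p^m=r^m+k^{-1}\xi^{m-1}\Delta W_m$ and $p_h^m=r_h^m+k^{-1}\xi_h^{m-1}\Delta W_m$ for $m\ge 1$, I would multiply by $k$, subtract, and apply the summation operator $\sum_{m=1}^M$ to obtain the exact decomposition
\[
 k\sum_{m=1}^M\bigl(p^m-p_h^m\bigr)
 = k\sum_{m=1}^M\bigl(r^m-r_h^m\bigr)
 + \sum_{m=1}^M\bigl(\xi^{m-1}-\xi_h^{m-1}\bigr)\Delta W_m .
\]
Taking the $L^2(\Ome;L^2_{per}(D))$-norm and using the triangle inequality then splits the task into two pieces.

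The first piece is controlled directly by Theorem~\ref{thm4.2}, which already furnishes the bound $C(\sqrt{k}+h+h^2k^{-\frac12})$. For the second piece I would exploit that $\xi^{m-1}-\xi_h^{m-1}$ is $\cF_{t_{m-1}}$-measurable while $\Delta W_m$ is independent of $\cF_{t_{m-1}}$ with variance $k$; the It\^o isometry (martingale orthogonality of the increments) then gives
\[
 \mE\biggl[\Bigl\|\sum_{m=1}^M(\xi^{m-1}-\xi_h^{m-1})\Delta W_m\Bigr\|^2\biggr]
 = k\sum_{m=1}^M\mE\bigl[\|\xi^{m-1}-\xi_h^{m-1}\|^2\bigr].
\]
This is structurally identical to the stochastic-integral term handled in \eqref{eq_4.35}, so I expect to reuse that estimate almost verbatim.

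It remains to bound $\mE[\|\xi^{m}-\xi_h^{m}\|^2]$. Since $\xi^m\in H^1_{per}(D)/\mathbb{R}$ and $\xi_h^m\in S_h$ both have vanishing mean, their difference is mean-free and the Poincar\'e inequality reduces the $L^2$-bound to the $H^1$-seminorm, $\|\xi^m-\xi_h^m\|\le C\|\nab(\xi^m-\xi_h^m)\|$. The potentials solve the Galerkin problems \eqref{eq_3.1} and \eqref{eq_4.7} with data $\vB(\tilde{\vu}^m)$ and $\vB(\tilde{\vu}_h^m)$ respectively; writing $\nab(\xi^m-\xi_h^m)=(\vB(\tilde{\vu}^m)-\vB(\tilde{\vu}_h^m))-(\pmb{\eta}^m_{\tilde{\vu}}-\pmb{\eta}^m_{\tilde{\vu}_{h}})$ and combining the Lipschitz bound \eqref{eq2.6a} with the Galerkin-plus-data-perturbation estimate already established in \eqref{eq_4.35}, I would obtain $k\sum_{m}\mE[\|\nab(\xi^m-\xi_h^m)\|^2]\le Ck\sum_m\mE[\|\e_{\tilde{\vu}}^m\|^2]+Ch^2$. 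Feeding in the velocity error estimate of Theorem~\ref{thm4.1}, which yields $k\sum_m\mE[\|\e_{\tilde{\vu}}^m\|^2]\le C(k+h^2+h^4k^{-1})$, and taking square roots produces the claimed rate $C(\sqrt{k}+h+h^2k^{-\frac12})$ for the second piece as well.

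The main point to watch, rather than a deep obstacle, is the bookkeeping of the powers of $h$ and $k$ in the finite-element error of the Helmholtz potential: one must verify that $\xi$ enters only at order $h$ (through the FE discretization) and at order $h^2k^{-\frac12}$ (through the velocity error), and never at a worse negative power of $k$. This is guaranteed precisely because the divergence-free structure of $\pmb{\eta}$ was already used to prove \eqref{eq_4.35}; the only care needed is to confirm the mean-zero condition that legitimizes the Poincar\'e step and to track the index shift $m\mapsto m-1$ introduced by Step~5. With these verifications the two pieces combine through the triangle inequality to give the stated bound.
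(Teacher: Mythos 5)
Your proposal is correct and follows essentially the same route as the paper: the identical decomposition into the pseudo-pressure error (handled by Theorem \ref{thm4.2}) plus the stochastic sum of Helmholtz-potential errors, the same It\^o-isometry reduction, and the same Poincar\'e-plus-Galerkin-perturbation bound on $\|\xi^m-\xi^m_h\|$ fed by Theorem \ref{thm4.1}. Your index bookkeeping ($\xi^{m-1}\Delta W_m$) is in fact slightly more careful than the paper's notation, but this changes nothing substantive.
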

	
	\begin{proof} 
		Since the proof follows the same lines as those of the proof for Corollary \ref{cor3.7},
		we only highlight the main steps. By definition of $\{p^m\}$ and $\{p^m_h\}$, we have
		\begin{align}
			\biggl\|k\sum_{m=1}^M(p^m - p^m_h)\biggr\| &\leq 	\biggl\|k\sum_{m=1}^M(r^m - r^m_h)\biggr\| + 	\biggl\|\sum_{m=1}^M(\xi^m_{\tilde{\vu}} - \xi_{\tilde{\vu}_h}^m)\Delta W_{m+1}\biggr\| \\\nonumber
			&=: {\tt I} + {\tt II}.
		\end{align}
		
		Term  ${\tt I}$ can be easily bounded by using Theorem \ref{thm4.2}. 
		To bound ${\tt II}$, by It\^o isometry we get 
		\begin{align*}
			\mE[{\tt II}]^2 = \mE\biggl[k\sum_{m=1}^M\|\xi^m - \xi_h^m\|^2\biggr].
		\end{align*} 
		In addition, by \eqref{eq_3.1} and \eqref{eq_4.7}, we get
		\begin{align}\label{eq4.27}
			\bigl\|\nab(\xi_{\tilde{\vu}}^m - \xi_{\tilde{\vu}_h}^m)\bigr\|^2 
			&\leq C\bigl\|\vB(\tilde{\vu}^m) - \vB(\tilde{\vu}^m_h) \bigr\|^2 
			+ Ch^2 \bigl\|\xi_{\tilde{\vu}}^m \bigr\|_{H^2} 	\\\nonumber
			&\leq C\bigl\|\vB(\tilde{\vu}^m) - \vB(\tilde{\vu}^m_h) \bigr\|^2 + Ch^2 \bigl\|\div(\vB(\tilde{\vu}^m)) \bigr\|^2 \\\nonumber
			&\leq C \bigl\|\vB(\tilde{\vu}^m) - \vB(\tilde{\vu}^m_h) \bigr\|^2 + Ch^2 \bigl\|\nab(\vB(\tilde{\vu}^m)) \bigr\|^2\\\nonumber
			&\leq C \bigl\|\vB(\tilde{\vu}^m) - \vB(\tilde{\vu}^m_h) \bigr\|^2 + Ch^2 \bigl\|\nab\tilde{\vu}^m \bigr\|^2.
		\end{align} 
		
		Finally,  by Poincar\'e inequality, Lipschitz continuity of $\vB$, Theorem \ref{thm4.1} and \eqref{eq4.27}, we obtain
		\begin{align*}
			\mE[{\tt II}]^2 = \mE\biggl[k\sum_{m=1}^M\|\xi^m - \xi_h^m\|^2\biggr] \leq C\biggl(k + h^2 + \frac{h^4}{k}\biggr).
		\end{align*}
		The proof is complete.
	\end{proof}
	
	We conclude this section by stating the following global error estimate theorem for Algorithm 4, 
	which is another main result of this paper.
	
	\begin{theorem}\label{thm4.7}
		Let $({\bf u}, P)$ be the solution of (\ref{eq1.1}) 
		and $\{ ({ \tilde{\vu}}^m_{h}, r_{h}^m, p_{h}^m)\}_{m=1}^M$ be the solution of Algorithm 4.
		Then, 
		there exists a constant $C  \equiv C (D_T,\vu_0,\vf,\beta,\delta)>0$ such that 
		\begin{align*}
			\max_{1\leq m \leq M} \Bigl(\mathbb{E}\bigl[\bigl\|\vu(t_m)-\tilde{\vu}^m_{h} \bigr\|^2\,\bigl]\Bigr)^{\frac12}
			&+\Bigl( \mathbb{E}\Bigl[ k\sum_{m=1}^{M} \bigl\|\nabla (\vu(t_n) -\tilde{\vu}^n_{h}) \bigr\|^2\,\Bigr] \Bigr)^{\frac12} \\
			&\quad \leq C \biggl( \sqrt{k}+  h +   h^2 k^{-\frac12} \biggr)\,, \\ 
			\Bigg(\mathbb{E} \bigg[ \bigg\|R(t_m) -k\sum^m_{n=1}r^n_{h} \bigg\|^2\, \bigg]\Bigg)^{\frac12} 	
			&  + \Bigg( \mathbb{E}\bigg[\bigg\|P(t_m) -k\sum^m_{n=1}p^n_{h} \bigg\|^2\,\bigg] \Bigg)^{\frac12} \\
			&\quad \leq C \, \biggl(\sqrt{k}+ h +  h^2 k^{-\frac12}  \biggr)\, .
		\end{align*}
	\end{theorem}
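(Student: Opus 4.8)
The plan is to derive the global error estimate by a triangle-inequality decomposition that inserts the semi-discrete solution of Algorithm 2 as an intermediary between the exact solution of \eqref{eq1.1} and the fully discrete solution of Algorithm 4. Since the individual ingredients have already been proved, the proof reduces to assembling them cleanly while keeping track of the index ranges and the time-averaged norms.

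First, for the velocity I would split
\begin{align*}
	\vu(t_m) - \tilde{\vu}^m_h = \bigl(\vu(t_m) - \tilde{\vu}^m\bigr) + \bigl(\tilde{\vu}^m - \tilde{\vu}^m_h\bigr),
\end{align*}
and apply the triangle inequality simultaneously in the $\max_m\bigl(\mE[\|\cdot\|^2]\bigr)^{1/2}$ norm and in the $\bigl(\mE[k\sum_m\|\nab\,\cdot\|^2]\bigr)^{1/2}$ norm. The semi-discrete error $\vu(t_m)-\tilde{\vu}^m$ is bounded by Theorem \ref{thm3.8}, contributing the $C\sqrt{k}$ term, while the spatial error $\tilde{\vu}^m-\tilde{\vu}^m_h$ is bounded by Theorem \ref{thm4.1}, contributing $C(\sqrt{k}+h+h^2k^{-1/2})$. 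Summing the two bounds (and absorbing the duplicated $\sqrt{k}$) gives the stated velocity estimate.

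Second, for the two pressure errors I would use exactly the same intermediary splitting. For $R$ I write
\begin{align*}
	R(t_m) - k\sum_{n=1}^m r^n_h = \Bigl(R(t_m) - k\sum_{n=1}^m r^n\Bigr) + k\sum_{n=1}^m\bigl(r^n - r^n_h\bigr),
\end{align*}
and control the first parenthesis by Theorem \ref{thm3.6} and the second by Theorem \ref{thm4.2}. For $P$ I split analogously into a semi-discrete part bounded by Corollary \ref{cor3.7} and a spatial part bounded by the corollary immediately following Theorem \ref{thm4.2}. In every case the semi-discrete term supplies $C\sqrt{k}$ and the spatial term supplies $C(\sqrt{k}+h+h^2k^{-1/2})$, so the triangle inequality yields the desired bounds.

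There is no genuinely new analytical obstacle here, since all of the delicate work—the It\^o-calculus stability estimates, the inf-sup and perturbed inf-sup arguments, and the discrete Gronwall applications—has already been carried out in establishing Theorems \ref{thm3.6}, \ref{thm3.8}, \ref{thm4.1}, \ref{thm4.2} and their corollaries. The only point requiring care is bookkeeping: ensuring that the summation ranges $1\le n\le m$ in the time-averaged pressure quantities line up across the semi-discrete and fully discrete estimates, and that the norms used in Theorems \ref{thm3.8} and \ref{thm4.1} match so that the triangle inequality may be applied termwise. Once this consistency is verified, the final estimate follows immediately by addition.
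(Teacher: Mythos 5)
Your proposal is correct and coincides with the paper's own (implicit) argument: the paper states Theorem \ref{thm4.7} without a written proof precisely because it follows by the triangle inequality, inserting the semi-discrete solution of Algorithm 2 as intermediary and combining Theorems \ref{thm3.8} and \ref{thm4.1} for the velocity, Theorems \ref{thm3.6} and \ref{thm4.2} for $R$, and Corollary \ref{cor3.7} with the corollary following Theorem \ref{thm4.2} for $P$. The norms in those results match the ones in the statement, so the termwise application of the triangle inequality you describe is exactly what is intended.
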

	
	\begin{remark}
		The above error estimates are of the same nature as those obtained in \cite{Feng} 
		for the standard Euler-Maruyama mixed finite element method. On the other hand,
		the error estimates obtained in \cite{Feng1} for the Helmholtz enhanced Euler-Maruyama
		mixed finite element method do not have the growth term $h^2 k^{-\frac12}$. 
		Hence, in the case of general multiplication noise, the Helmholtz enhanced Euler-Maruyama
		mixed finite element method performs better than the Helmholtz enhanced Chorin 
		finite element method in terms of rates of convergence. 
	\end{remark}	
	
	%%%%%%%
	\section{Numerical experiments}\label{sec5} 	
	In this section, we present  two 2D numerical tests to guage the performance of the proposed numerical methods/algorithms. The first test is to verify the convergent rates proved in Theorem \ref{thm4.3} for Algorithm 3 while the second test is to validate the convergent rates proved in Theorem \ref{thm4.7}. 
	In both tests the computational domain is chosen as $D = (0,1)\times (0,1)$, the $P_1-P_1$ equal-order pair of finite element spaces are used for spatial discretization, the constant source function $\mathbf{f}=(1,1)$ is applied, 
	the terminal time is $T = 1$, the fine time and space mesh sizes   
	$k_0 = \frac{1}{4096}$ and $h =\frac{1}{50}$ are used to compute the numerical true solution,  and the number of realizations is set as $N_p = 500$ for the first test and $N_p = 800$ for the second one. 
	Moreover, to evaluate errors in strong norms, we use the following numerical 
	integration formulas: for any $0 \leq m \leq M$
	\begin{align*}
		\pmb{\mathcal{E}}_{\vu}^m &:=\biggl(\mE\Bigl[\|{\bf u}(t_m) -{\bf u}_h^m(k)\|^2 \Bigr]\biggr)^{\frac12} \approx
		\biggl(\frac{1}{N_p}\sum_{\ell=1}^{N_p} \bigl\|{\bf u }_h^m(k_0,\omega_\ell)
		-{\bf u}_h^m(k,\omega_\ell) \bigr\|^2 \biggr)^{\frac12}\, ,\\
		\pmb{\mathcal{E}}_{\vu,av}^M &:=\biggl(\mE\Bigl[k\sum_{m=0}^M\|{\bf u}(t_m) -{\bf u}_h^m(k)\|^2 \Bigr]\biggr)^{\frac12} \\\nonumber
		&\approx
		\biggl(\frac{1}{N_p}\sum_{\ell=1}^{N_p} \Bigl(k\sum_{m=0}^M\bigl\|{\bf u }_h^m(k_0,\omega_\ell)
		-{\bf u}_h^m(k,\omega_\ell) \bigr\|^2\Bigr) \biggr)^{\frac12}\, ,\\
		%\mathcal{E}_{p}^m &:=\biggl(\mE\Bigl[\|p(t_m)-p_h^m(k)\|^2\Bigr]\Bigr)^{\frac12} \approx
	%	\Bigl(\frac{1}{N_p}\sum_{\ell=1}^{N_p} \bigl\|p_h^m(k_0,\omega_\ell)-p_h^m(k,\omega_\ell) \bigr\|^2\biggr)^{\frac12}\, ,\\
		{\mathcal{E}}_{p,av}^M &:=\biggl(\mE\Bigl[k\sum_{m=0}^M\Big\|P(t_m) -k\sum_{n=0}^m p_h^n(k)\Big\|^2 \Bigr]\biggr)^{\frac12}
		\\
		&\,\,\approx \biggl(\frac{1}{N_p}\sum_{\ell=1}^{N_p}\Bigl(k\sum_{m=0}^M\Bigl\|k_0\sum_{n=1}^{\frac{t_m}{k_0}} p_{h}^n(k_0,\omega_\ell) -k\sum_{n=1}^{\frac{t_m}{k}}p_h^n(k,\omega_\ell)\Bigr\|^2\Bigr) \biggr)^{\frac12}\, .
	\end{align*}

	\medskip
	{\bf Test 1}. In this test, the  nonlinear multiplicative noise function $\vB$ is chosen as $\vB(\vu) = 10\bigl((u_1^2+1)^{\frac12}, (u_2^2+1)^{\frac12}\bigr)$ and the initial value $\vu_0 = (0,0)^{\top}$. Moreover, we choose $\mathbb{R}^J$-valued Wiener process $W$ with increments satisfying 
	\begin{align}
		\Delta W_{m+1} = W^J(t_{m+1},\mathbf{x}) - W^J(t_m,\mathbf{x}) = k_0\sum_{j,\ell=0}^J \sqrt{\lambda_{j,\ell}}\,e_{j,\ell}(\mathbf{x})\beta_{j,\ell}^m,
	\end{align}
	where $\mathbf{x} = (x_1,x_2) \in D$, $\beta_{j,\ell}^m \sim N(0,1)$ and $\{e_{j,\ell}(\mathbf{x})\}_{j,\ell}$ are orthonormal functions defined by $e_{j,\ell}(\mathbf{x}) = g_{j,\ell}\|g_{j,\ell}\|^{-1}$ with % nonsolenoidal functions:
	\begin{align}
		g_{j,\ell}(x_1,x_2) = \sin(j\pi x_1)\sin(\ell \pi x_2)
	\end{align}
	and $\lambda_{j,\ell} = \frac{1}{(j+\ell)^2}\|g_{j,\ell}\|$. In this test, we set $J = 2$, $\nu = 1$.
	
	Figure \ref{fig5.1} displays the convergence rates of the time discretization produced by 
	Algorithm 3 (and Algorithm 1) using different time step size $k$. The left figure shows the convergence rate  $O(k^{\frac14})$ in the $\pmb{\E}_{\vu,av}^M$-norm for the velocity approximation, while the right graph shows the same convergence rate in the $\pmb{\E}^M_{p,av}$-norm for the pressure approximation, both match the theoretical rates proved in our theoretical error estimates. 
	 
	\begin{figure}[thb]
		\begin{center}
			\centerline{
			\includegraphics[scale=0.26]{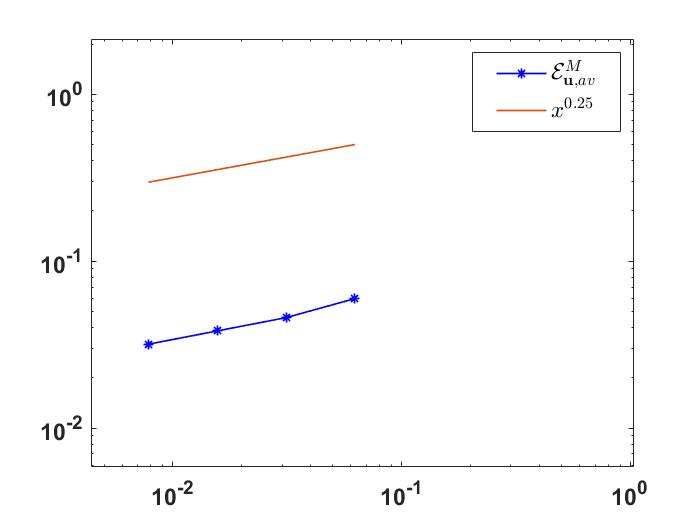}
			\includegraphics[scale=0.26]{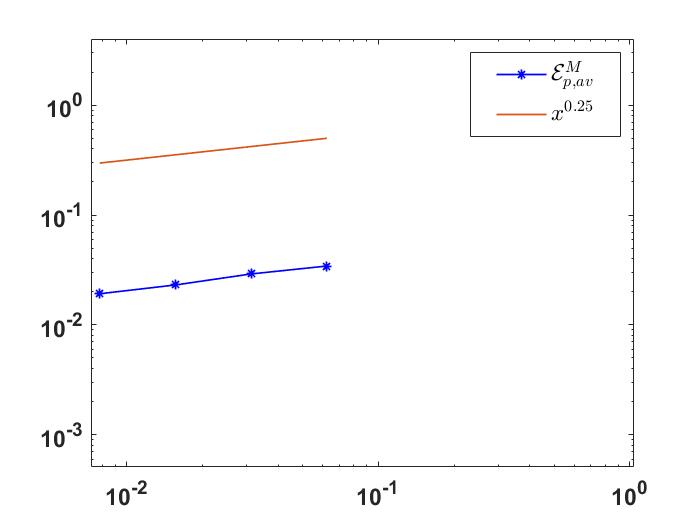}
			}
			\caption{Convergence rates of the time discretization for the velocity (left) and pressure (right) approximations by Algorithm 3 in the $\pmb{\E}^M_{\vu,av}$ norm and $\pmb{\E}^M_{p,av}$ norm respectively.}\label{fig5.1}
		\end{center}
		
	\end{figure}

	Next, we want to verify that the dependence of the error bounds on the factor $k^{-\frac12}$	is valid. To the end, we fix $h = \frac{1}{20}$ and use again different time step size $k$. The numerical results in  Figure \ref{fig5.3} shows  that the errors for both the velocity and pressure approximations increase as the time step size decreases, which proves that the error bounds are indeed proportional to some negative power of $k$.  
	
	\begin{figure}[thb]
		\begin{center}
			\centerline{
			\includegraphics[scale=0.26]{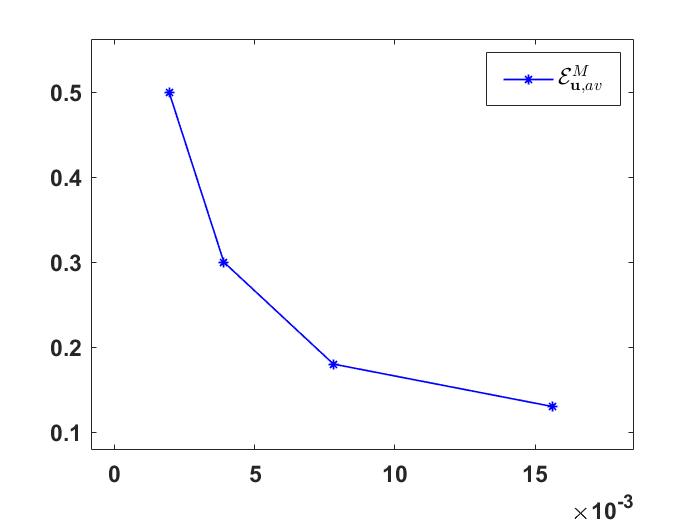}
			\includegraphics[scale=0.26]{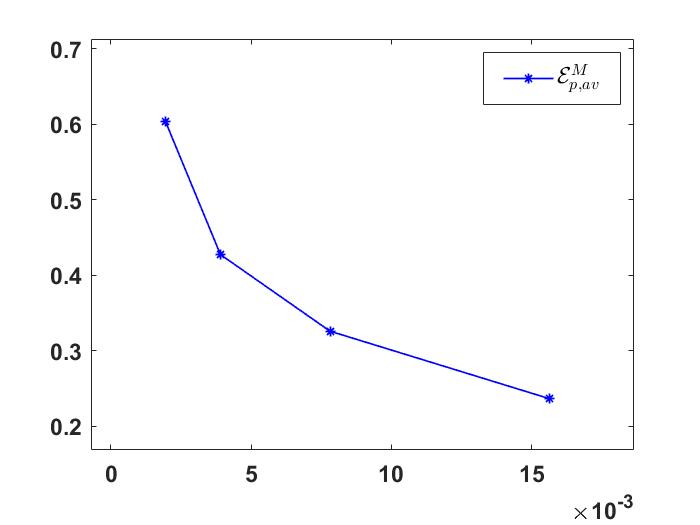}
		}
			\caption{Errors the velocity approximation (left) in $\pmb{\E}^M_{\vu,av}$ norm and the pressure approximation (right) in $\E_{p,av}^M$ norm by Algorithm 3.}\label{fig5.3}
		\end{center}
	\end{figure}
	
	To verify the sharpness of the error bounds on the factor $k^{-\frac12}$,  we implement Algorithm 3 using different pairs $(k,h)$, which satisfy the relation $h\approx k$, and display the numerical results in Figure \ref{fig5.4}.  We observe $\frac14$ order convergence rate for both the velocity and pressure approximations as predicted in Theorem \ref{thm4.3}. 
	
	\begin{figure}[thb]
		\begin{center}
			\centerline{
			\includegraphics[scale=0.26]{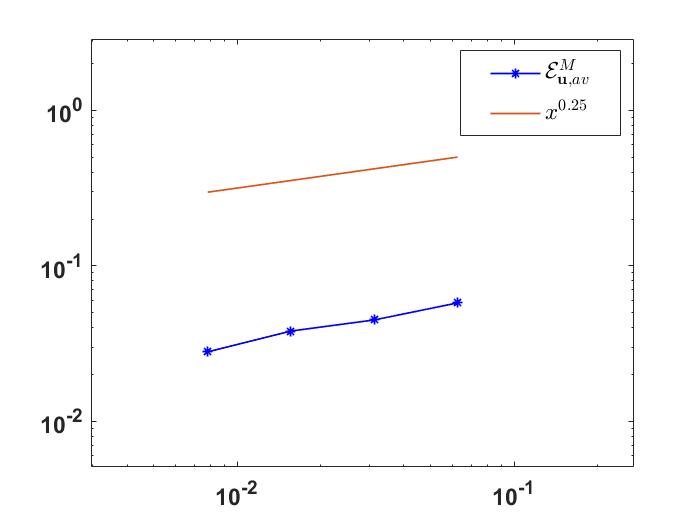}
			\includegraphics[scale=0.26]{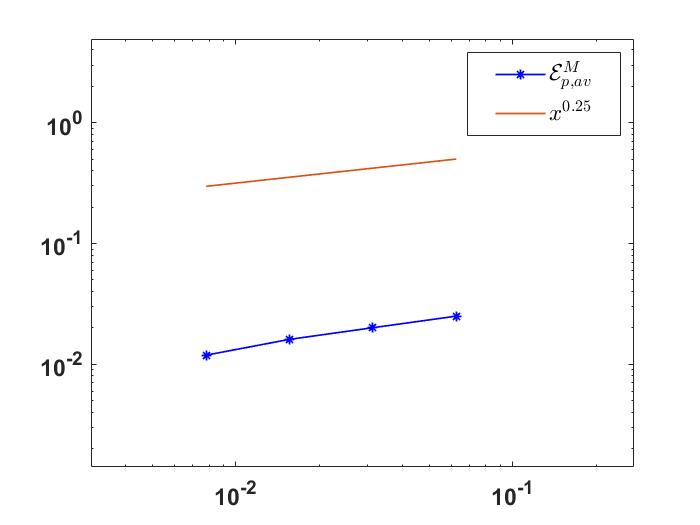}
		}
			\caption{Convergence rates in the $\pmb{\E}^M_{\vu,av}$ norm for the velocity (left) approximation and the $\E^M_{p,av}$ norm for the pressure (right) approximation by Algorithm 3 under the mesh condition $h \approx k$.}\label{fig5.4}
		\end{center}
	\end{figure}

\medskip
	{\bf Test 2.} We use the same test problem as in {\bf Test 1} to validate 
	the theoretical error estimates for our modified Chorin scheme given by Algorithm 4. However, the nonlinear multiplicative noise functions is chosen as $\vB(\vu) = \bigl((u_1^2+1)^{\frac12}, (u_2^2+1)^{\frac12}\bigr)$.
	It should be noted that a similar numerical experiment was done in \cite{CHP12}. 
	However, only the velocity approximation was analyzed and tested, no convergent rate for the pressure approximation was proved or tested in \cite{CHP12}. Here we want to emphasize the optimal convergence rate for the pressure approximation in the time-averaged norm.

	Figure \ref{fig5.5} displays the $\frac12$ order convergence rate in time for both the velocity and pressure approximations by Algorithm 4 as predicted by Theorem \ref{thm4.7}.  We note that 
	the velocity error is measured in the strong norm and the pressure error is 
	measured in a time-averaged norm. 
	
	\begin{figure}[thb]
		\begin{center}
			\includegraphics[scale=0.42]{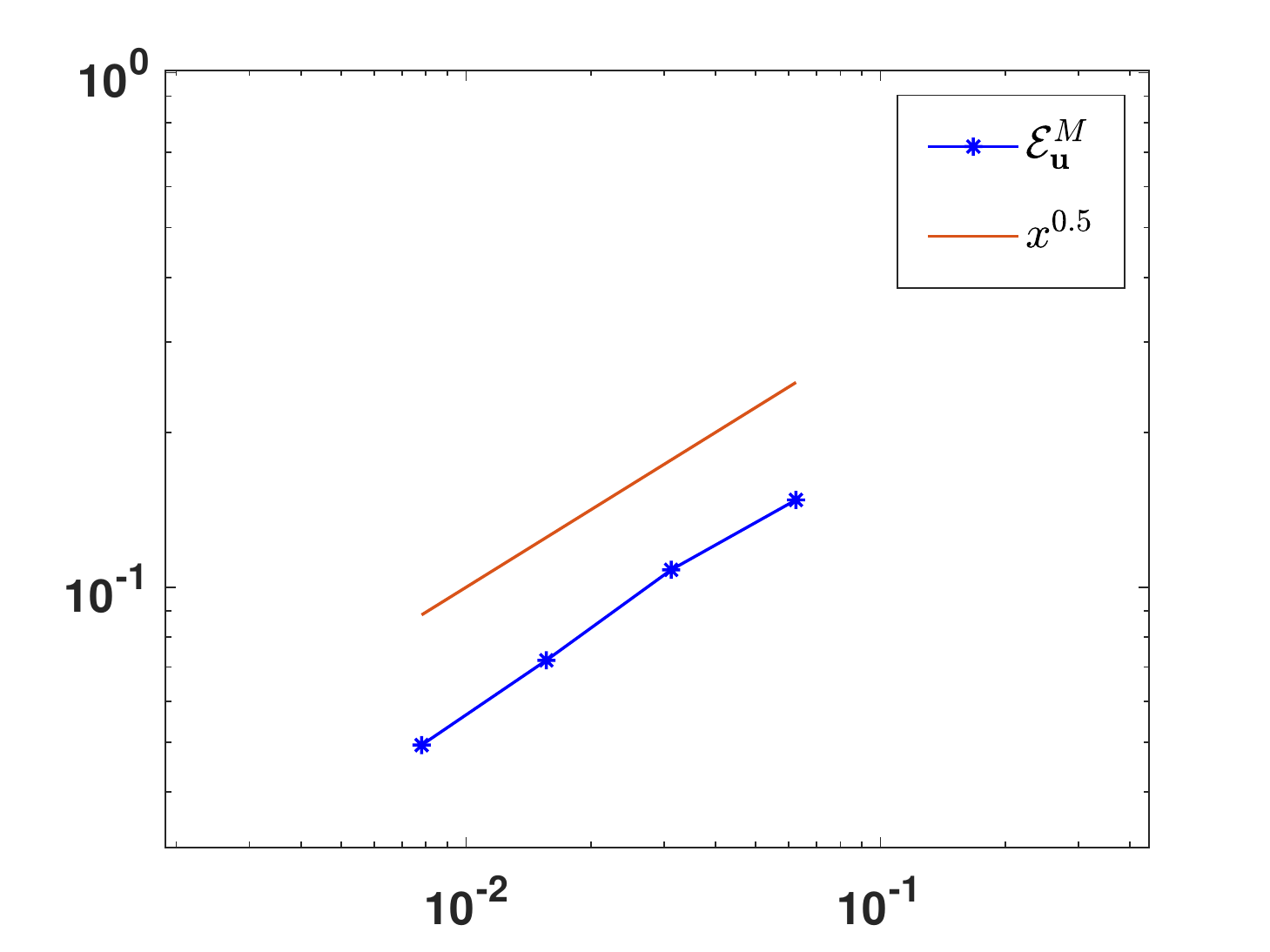}
			\includegraphics[scale=0.42]{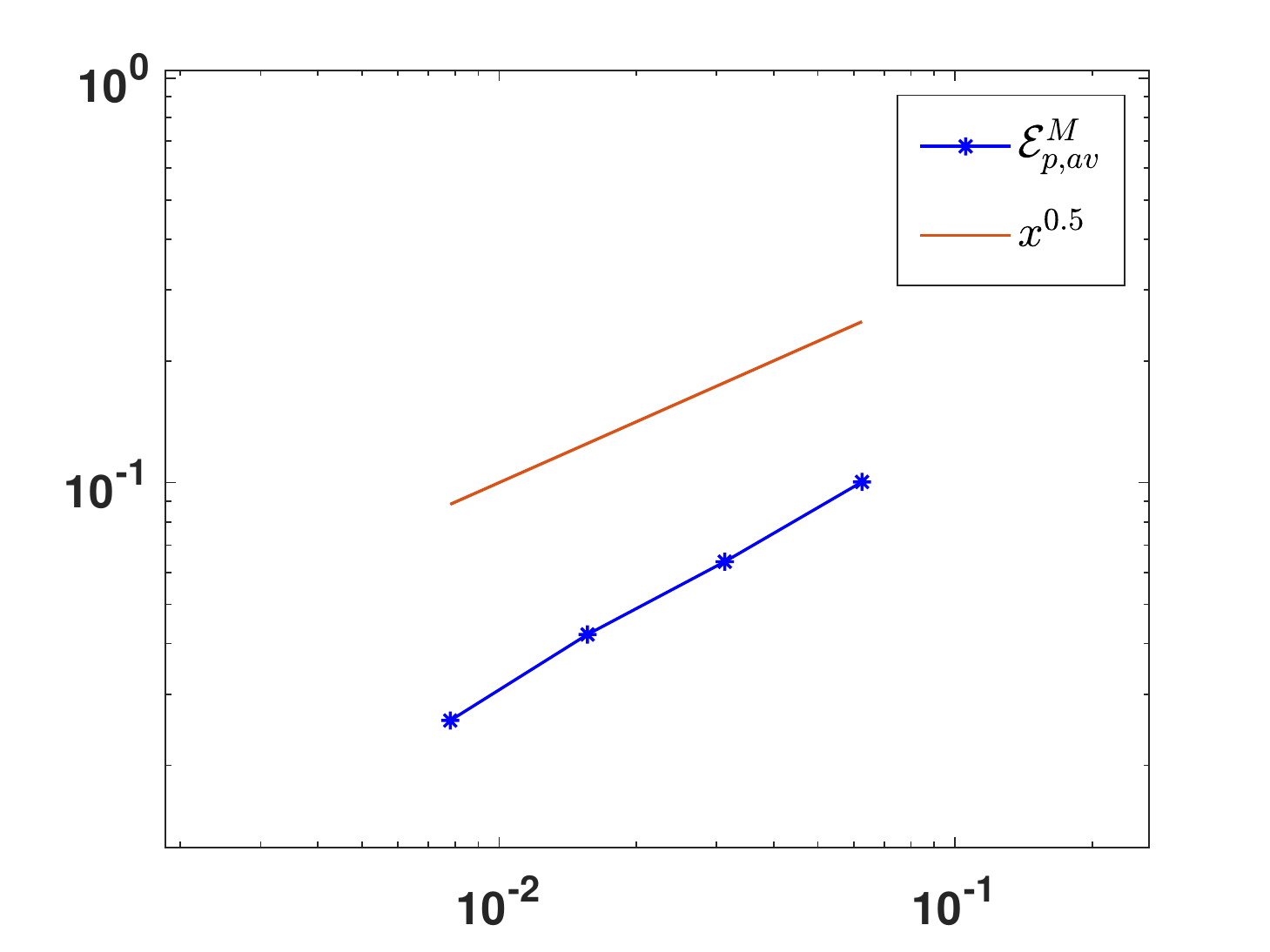}
			\caption{Convergence rates of the time discretization for the velocity in strong norm (left) and pressure  in time-averaged norm (right)  by Algorithm 4.}\label{fig5.5}
		\end{center}
		
	\end{figure}
	
	Similar to {\bf Test 1}, we want to test whether the dependence of the error bounds on the factor $k^{-\frac12}$ is valid and sharp.  To the end, we use the same strategy as we did in {\bf Test 1},  
	namely, we fix mesh size $h=\frac{1}{20}$ and decrease time step size $k$. As expected,  we observe
	that the errors blow up as shown in Figure \ref{fig5.6}. 
	\begin{figure}[thb]
		\begin{center}
			\includegraphics[scale=0.42]{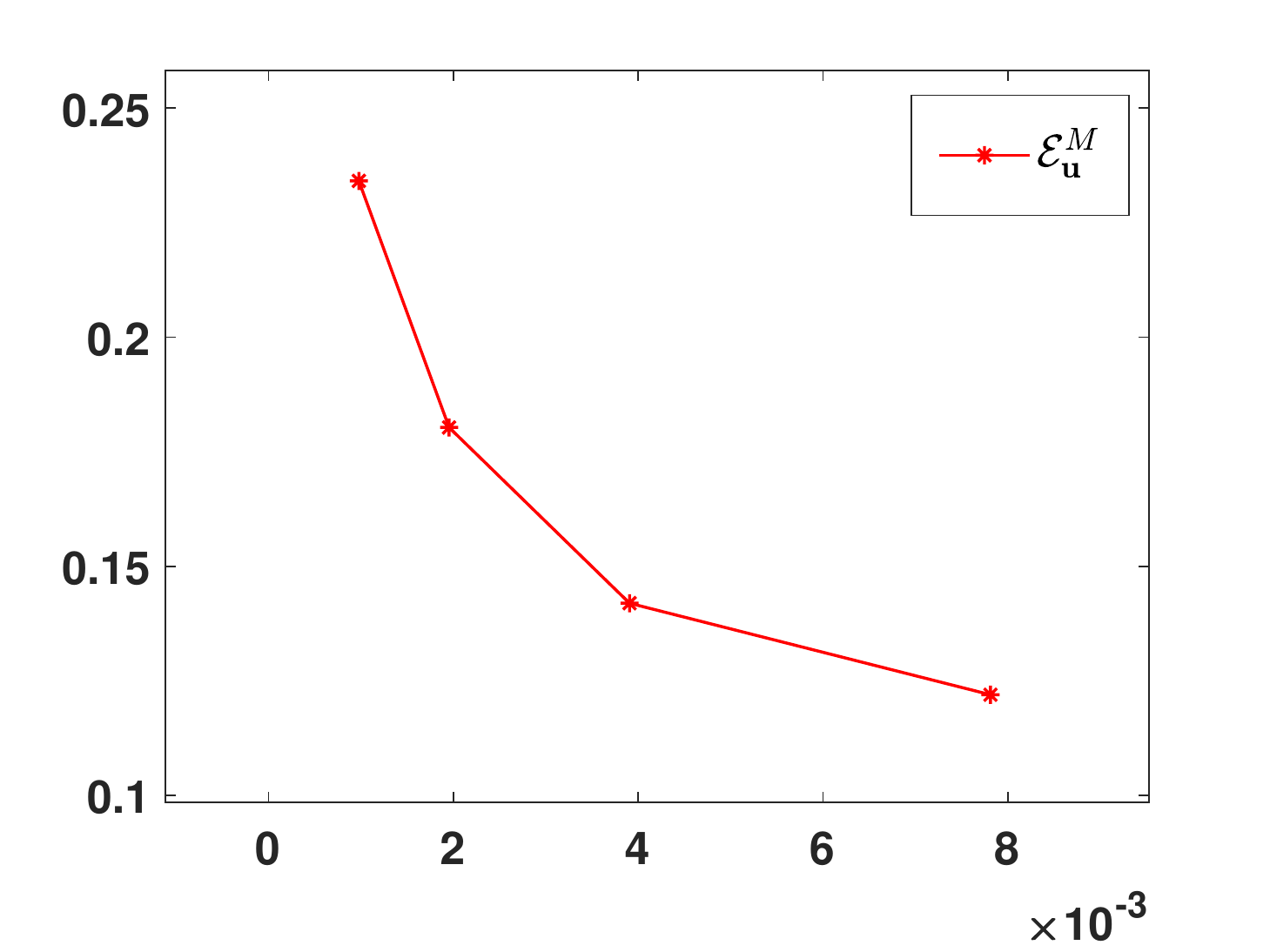}
			\includegraphics[scale=0.42]{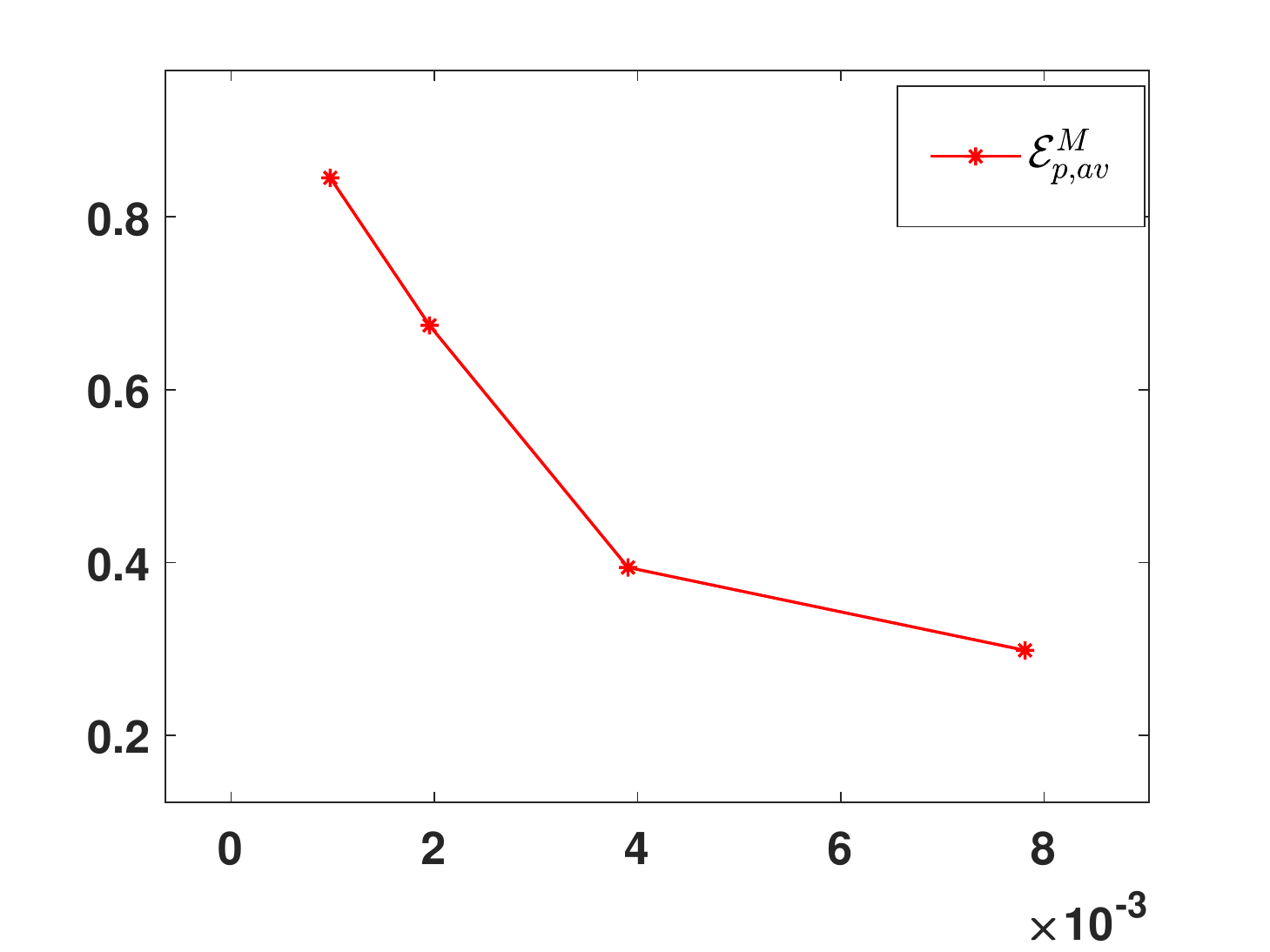}
			\caption{Errors for the velocity approximation in strong norm (left) and pressure approximation
				in time-averaged norm (right) by Algorithm 4.}\label{fig5.6}
		\end{center}
	\end{figure}
	Finally,  Figure \ref{fig5.7} shows the $\frac12$ order convergence rate for both the velocity and pressure approximations by Algorithm 4 when the time step size $k$ and the space mesh size $h$ satisfy the balancing condition $h \approx \sqrt{k}$, which verifies the sharpness of the dependence 
	of the error bounds on on the factor $k^{-\frac12}$  as predicted by Theorem \ref{thm4.7}. \begin{figure}[thb]
		\begin{center}
			\includegraphics[scale=0.42]{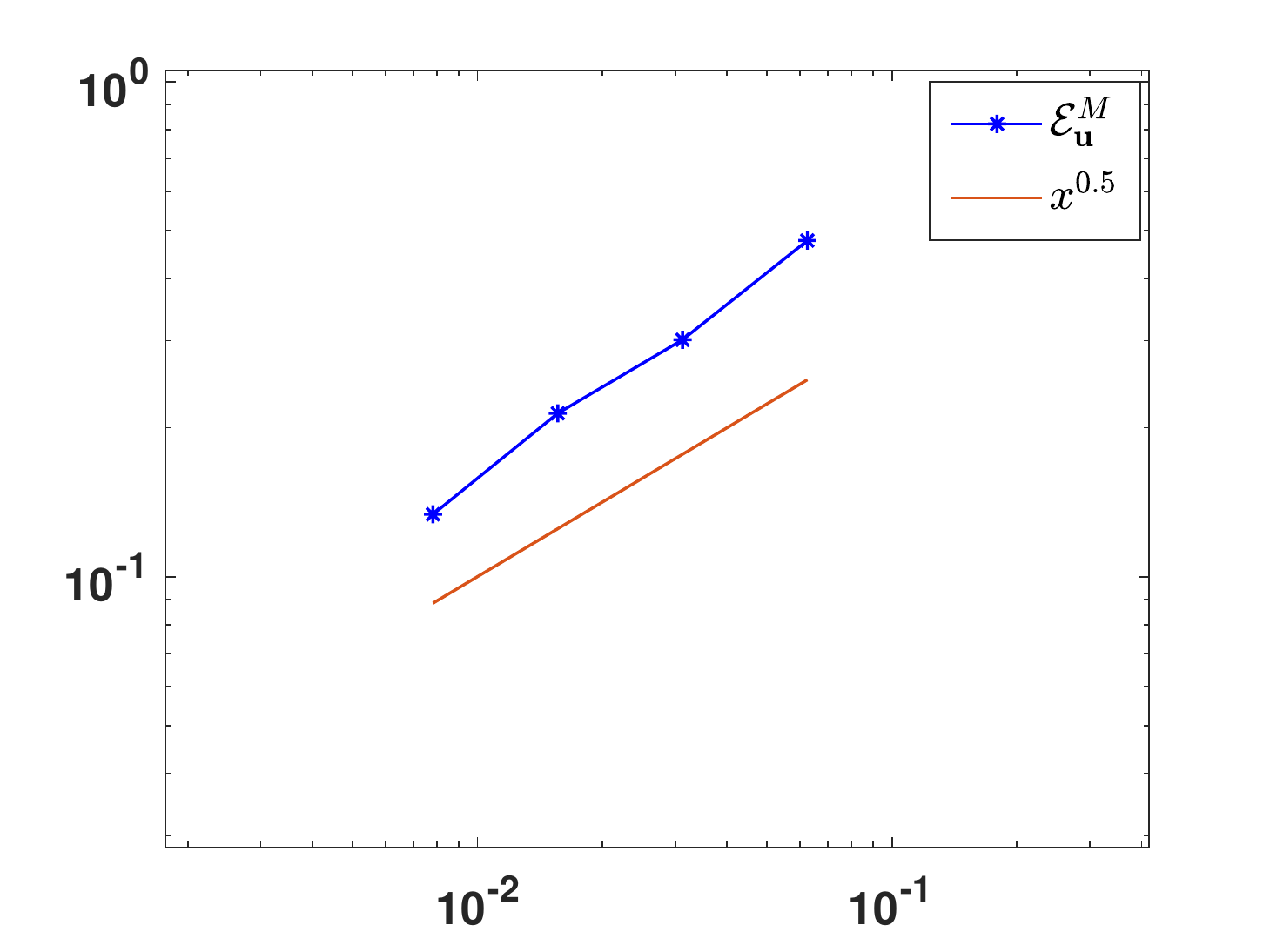}
			\includegraphics[scale=0.42]{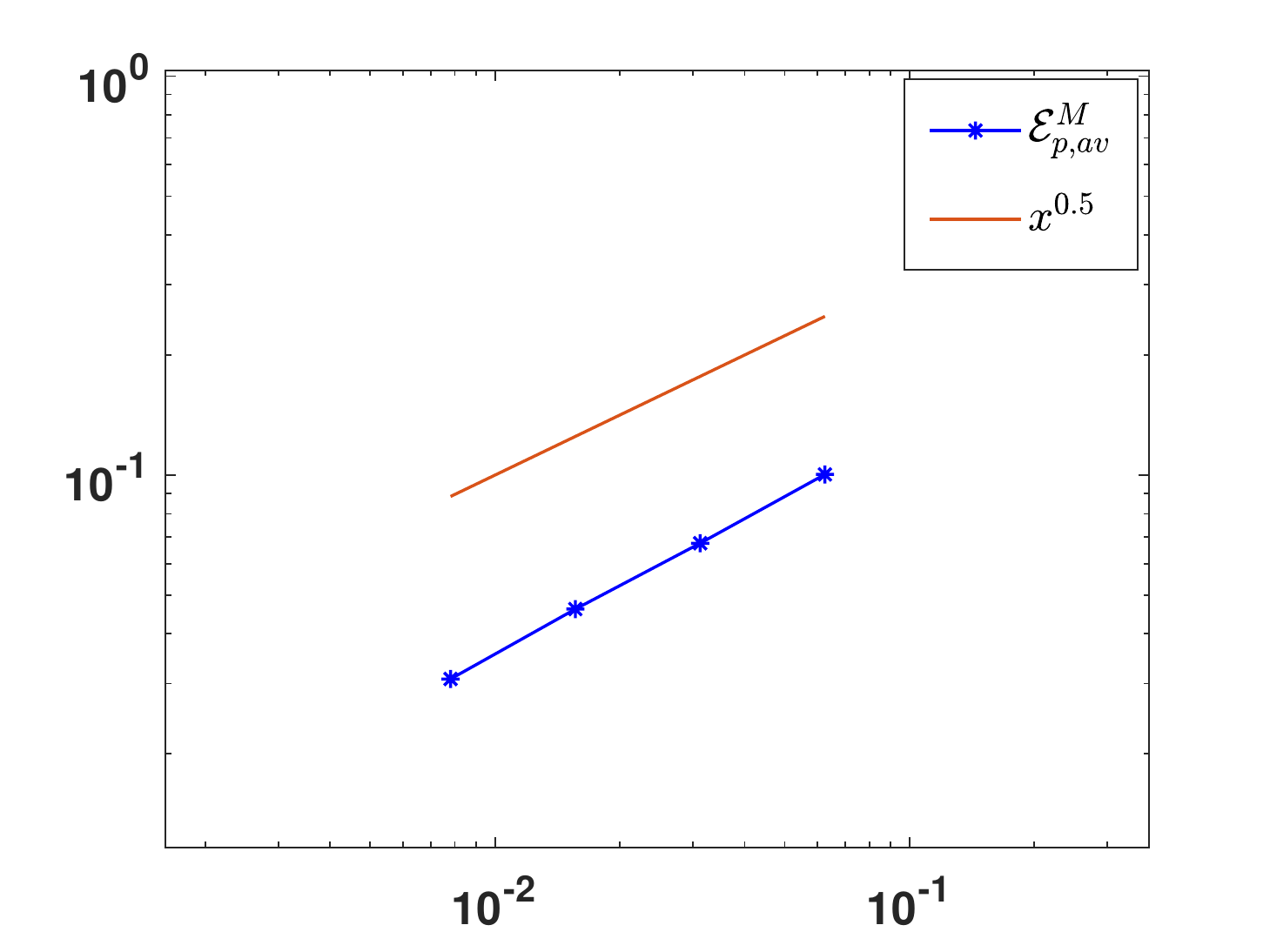}
			\caption{Convergence rates for the velocity approximation in strong norm (left) and pressure approximation in time-averaged norm (right) under the mesh condition $h \approx \sqrt{k}$. }\label{fig5.7}
		\end{center}
		
	\end{figure}

	%\newpage
	%%%%%%%%%%%%

\end{document}